\def\O{\mathbf{O}}
\DeclareMathOperator{\aut}{Aut}
\DeclareMathOperator{\cay}{Cay}
\DeclareMathOperator{\cyc}{Cyc}
\DeclareMathOperator{\GL}{GL}
\DeclareMathOperator{\iso}{Iso}
\DeclareMathOperator{\orb}{Orb}
\DeclareMathOperator{\Span}{Span}
\DeclareMathOperator{\sym}{Sym}
\DeclareMathOperator{\rad}{rad}
\DeclareMathOperator{\Sup}{Sup}
\DeclareMathOperator{\DCI}{DCI}
\DeclareMathOperator{\CI}{CI}
\DeclareMathOperator{\NCI}{NCI}
\DeclareMathOperator{\NDCI}{NDCI}
\DeclareMathOperator{\Hol}{Hol}
\newcommand{\overbar}[1]{\mkern 1.5mu\overline{\mkern-1.5mu#1\mkern-1.5mu}\mkern 1.5mu}
\def\tm#1{\item[{\rm (#1)}]}
\def\@seccntformat#1{\csname the#1\endcsname. } 
\def\@biblabel#1{#1.}
\title{On CI-property of normal Cayley digraphs over abelian groups}
\author{Grigory Ryabov}
\address{School of Mathematical Sciences, Hebei Key Laboratory of Computational Mathematics and Applications, Hebei Normal University, Shijiazhuang 050024, P. R. China}
\address{Novosibirsk State Technical University, Novosibirsk, Russia}
\email{gric2ryabov@gmail.com}
\thanks{The author was supported by the grant of The Natural Science Foundation of Hebei Province (project No.~A2023205045)}
\date{}
\newtheorem{prop}{Proposition}[section]
\newtheorem{lemm}[prop]{Lemma}
\newtheorem{theo}[prop]{Theorem}
\newtheorem{corl}[prop]{Corollary}
\theoremstyle{definition}
\begin{document}

\vspace{\baselineskip}
\vspace{\baselineskip}

\vspace{\baselineskip}

\vspace{\baselineskip}

\begin{abstract}
A Cayley digraph $\Gamma$ over a finite group $G$ is said to be \emph{CI} if for every Cayley digraph $\Gamma^\prime$ over $G$ isomorphic to $\Gamma$, there is an isomorphism from $\Gamma$ to $\Gamma^\prime$ which is at the same time an automorphism of $G$. In the present paper, we study a $\CI$-property of \emph{normal} Cayley digraphs over abelian groups, i.e. such Cayley digraphs $\Gamma$ that the group $G_r$ of all right translations of $G$ is normal in $\aut(\Gamma)$. At first, we reduce the case of an arbitrary abelian group to the case of an abelian $p$-group. Further, we obtain several results on $\CI$-property of normal Cayley digraphs over abelian $p$-groups. In particular, we prove that every normal Cayley digraph over an abelian $p$-group of order at most $p^5$, where $p$ is an odd prime, is $\CI$. 
\\
\\
\textbf{Keywords}: Normal Cayley digraphs, Cayley isomorphism property, Schur rings.
\\
\\
\textbf{MSC}: 05C25, 05C60, 20B25.
\end{abstract}

\maketitle

\section{Introduction}

Let $G$ be a finite group and $S$ a subset of $G$ not containing the identity element of $G$. By a \emph{Cayley digraph} $\Gamma=\cay(G,S)$ over $G$ with \emph{connection set} $S$, we mean a directed graph with vertex set $G$ and arc set $\{(g,sg):~g\in G,~s\in S\}$. If $S$ is inverse-closed, then $\Gamma$ is a \emph{Cayley graph}. Two Cayley digraphs $\Gamma$ and $\Gamma^\prime$ over $G$ are called \emph{Cayley isomorphic} if there exists an isomorphism from $\Gamma$ to $\Gamma^\prime$ which is an automorphism of $G$. Clearly, if Cayley digraphs are Cayley isomorphic, then they are (combinatorially) isomorphic. However, the converse statement does not hold in general. A Cayley digraph $\Gamma$ over $G$ is said to be \emph{CI} if every Cayley digraph over $G$ isomorphic to $\Gamma$ is Cayley isomorphic to $\Gamma$. One of the main motivations to study $\CI$-Cayley digraphs comes from the following observation. If $\Gamma$ is a $\CI$-Cayley digraph over $G$, then an isomorphism between $\Gamma$ and any other Cayley digraph over $G$ can be verified in time $|G|^{O(1)}|\aut(G)|$. In many cases, the latter time is polynomial in $|G|$, i.e. in the number of vertices of $\Gamma$. For more details on $\CI$-Cayley digraphs, we refer the reader to the survey~\cite{Li}. 

A finite group~$G$ is defined to be a \emph{DCI-group} (\emph{CI-group}, respectively) if every Cayley digraph (graph, respectively) over $G$ is $\CI$. The problem of determining of all (D)CI-groups going back to the \'Ad\'am conjecture~\cite{Adam} was posed in the classical paper by Babai and Frankl~\cite{BF}. Strong necessary conditions for a finite group $G$ to be (D)CI can be found in~\cite{DMS,Li,LLP}. In particular, if $G$ is an abelian $\DCI$-group, then every Sylow subgroup of $G$ must be elementary abelian or cyclic of order~$4$. In general, the problem of determining whether a given group $G$ is (D)CI seems to be hard, in particular, because the number of Cayley (di)graphs over $G$ is exponential in $|G|$. A collection of results on $\DCI$- and $\CI$-groups can be found in~\cite[Section~8]{Li}.

Another way to study a $\CI$-property of Cayley digraphs is to study this property for different reasonable classes of them. For instance, the classes of Cayley digraphs of bounded valency and central Cayley digraphs have been investigated in this context (see~\cite[Section~9]{Li} and~\cite{GGRV}, respectively). One more class of Cayley digraphs whose $\CI$-property has been studied is the class of normal Cayley digraphs. It is easy to check that the automorphism group of a Cayley digraph $\Gamma$ over a group $G$ has a regular subgroup $G_r$ consisting of all right translations of $G$ or, in other words, $\aut(\Gamma)\in \Sup(G_r)$, where $\Sup(G_r)$ is the set of all subgroups of the symmetric group $\sym(G)$ containing $G_r$. Given $K_1\leq K_2\leq \sym(G)$, the centralizer and normalizer of $K_1$ in $K_2$ are denoted by $C_{K_2}(K_1)$ and $N_{K_2}(K_1)$, respectively.  The Cayley digraph $\Gamma$ over $G$ is said to be \emph{normal} if $G_r$ is normal in $\aut(\Gamma)$ or, equivalently, 
$$\aut(\Gamma)\leq N_{\sym(G)}(G_r)=\Hol(G)=G_r\rtimes \aut(G).$$
Normal Cayley digraphs have been studied in different areas of the algebraic combinatorics (see, e.g,~\cite{FPW,Praeger,WWX}).

Seemingly, a discussion on a $\CI$-property of normal Cayley digraphs was initiated by Li in~\cite[Section~6.4]{Li}. In this paper, he constructed a normal non-$\CI$ Cayley digraph over a cyclic $2$-group of order at least~$8$ and conjectured that normal non-$\CI$ Cayley digraphs are rare. In~\cite{XFRL}, the following notion was introduced. A finite group $G$ is defined to be an \emph{NDCI-group} (\emph{NCI-group}, respectively) if every normal Cayley digraph (graph, respectively) over $G$ is $\CI$. The problem of determining of all N(D)CI-groups was posed in the same paper. The cyclic, generalized quaternion, and dihedral $\NDCI$- and $\NCI$-groups were classified in~\cite{XFRL},~\cite{XFK}, and~\cite{XFZ}, respectively.

In the present paper, we study a $\CI$-property of normal Cayley digraphs over abelian groups. To do this, we use an approach going back to the following Babai result~\cite{Babai}. A Cayley digraph $\Gamma$ over a group $G$ is $\CI$ if and only if every regular subgroup of $\aut(\Gamma)$ isomorphic to $G$ (further throughout the paper, we will call such subgroups \emph{$G$-regular}) is conjugate in $\aut(\Gamma)$ to $G_r$. Any permutation group $K\in \Sup(G_r)$ having the latter property that every $G$-regular subgroup of $K$ is conjugate in $K$ to $G_r$ is said to be \emph{$G$-transjugate}~\cite{HM}.

Following~\cite{Wi2}, we say that two permutation groups $K_1,K_2\leq \sym(G)$ are called \emph{$2$-equivalent} if $K_1$ and $K_2$ acting on $G^2$ componentwise have the same orbits. The relation of $2$-equivalence is an equivalence relation on the set of all subgroups of $\sym(G)$. Every equivalence class has a unique maximal element with respect to inclusion. Given $K\leq \sym(G)$, this element is called the \emph{$2$-closure} of $K$ and denoted by $K^{(2)}$. The permutation group $K$ is said to be \emph{$2$-closed} if $K^{(2)}=K$. The set of all $2$-closed permutation groups from $\Sup(G_r)$ is denoted by $\Sup_2(G_r)$. An automorphism group of any Cayley digraph over $G$ belongs to $\Sup_2(G_r)$. So if every $K\in \Sup_2(G_r)$ is $G$-transjugate, then an automorphism group of every Cayley digraph over $G$ so is and hence $G$ is a $\DCI$-group. A finite group $G$ is defined to be a \emph{$\CI^{(2)}$-group} if it has the aforesaid property, that is, 
$$K~\text{is}~G\text{-transjugate for every}~K\in \Sup_2(G_r)$$ 
(see~\cite{HM}). Clearly, every $\CI^{(2)}$-group is also $\DCI$, however it is not known whether there exists a $\DCI$-group which is non-$\CI^{(2)}$. All known abelian $\DCI$-groups are also $\CI^{(2)}$ (see~\cite{FK,HM,KM,KMPRS,KR,M1,M2,MS,Ry,Ry2}).

The lattice of all subgroups of $\Hol(G)$ is denoted by $\mathcal{L}(\Hol(G))$. A finite group $G$ is defined to be an \emph{$\NCI^{(2)}$-group} if 
$$K~\text{is}~G\text{-transjugate for every}~K\in \Sup_2(G_r)\cap \mathcal{L}(\Hol(G)).$$ 
If a Cayley digraph is normal, then its automorphism group belongs to $\Sup_2(G_r)\cap \mathcal{L}(\Hol(G))$. Therefore every $\NCI^{(2)}$-group is also $\NDCI$. Given $K\in \Sup_2(G_r)\cap \mathcal{L}(\Hol(G))$ is $G$-transjugate if and only if $G_r$ is a unique $G$-regular subgroup of $K$ because $G_r$ is normal in $K$.

In the present paper, we study abelian $\NCI^{(2)}$-groups and by that find infinite families of abelian groups whose all normal Cayley digraphs are $\CI$. The first main result of the paper provides a necessary and sufficient condition for an abelian group to be $\NCI^{(2)}$.

\begin{theo}\label{main1}
An abelian group is an $\NCI^{(2)}$-group if and only if each of its Sylow subgroups so is.  
\end{theo}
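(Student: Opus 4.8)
The plan is to work through the primary decomposition $G=\prod_p G_p$, exploiting that for an abelian group it is respected by the entire holomorph: $\aut(G)=\prod_p\aut(G_p)$, hence $\Hol(G)=\prod_p\Hol(G_p)$ acting coordinatewise, and $G_r=\prod_p(G_p)_r$. By the reformulation recorded just before the statement, it suffices to decide, for each $2$-closed $K$ with $G_r\le K\le\Hol(G)$, whether $G_r$ is the unique $G$-regular subgroup of $K$. Writing $A:=K_e=K\cap\aut(G)\le\prod_p\aut(G_p)$ one always has $K=G_r\rtimes A=\aut(\cyc(A,G))$, so everything is phrased in terms of the cyclotomic S-ring $\cyc(A,G)$ and its projections. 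I will treat the two implications separately.

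For the ``only if'' direction I would argue contrapositively by lifting a counterexample. Suppose some $G_p$ is not $\NCI^{(2)}$, witnessed by a $2$-closed $M_p=(G_p)_r\rtimes A_p\le\Hol(G_p)$ (with $A_p\le\aut(G_p)$) containing a $G_p$-regular subgroup $R_p\ne(G_p)_r$. Viewing $A_p$ inside $\aut(G)=\prod_q\aut(G_q)$ as acting trivially on the other factors, the cyclotomic S-ring splits as a tensor product $\cyc(A_p,G)=\cyc(A_p,G_p)\otimes\bigotimes_{q\ne p}\mZ[G_q]$, whence its automorphism group is $M_p\times\prod_{q\ne p}(G_q)_r\le\Hol(G)$, a $2$-closed subgroup containing $G_r$. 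Then $R:=R_p\times\prod_{q\ne p}(G_q)_r$ is a $G$-regular subgroup different from $G_r$, so $G$ is not $\NCI^{(2)}$. The only inputs are that the automorphism group of a tensor product of S-rings over coprime factors is the direct product of the automorphism groups and that $\aut(\mZ[G_q])=(G_q)_r$.

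For the ``if'' direction, fix a $2$-closed $K=G_r\rtimes A\le\Hol(G)$ and a $G$-regular $R\le K$; I must show $R=G_r$. Since $R\cong G$ is abelian, $R=\prod_p R_p$ with $R_p$ its Sylow $p$-subgroup, $R_p\cong G_p$. The first key step is a centralizer argument removing ``twists'': for $q\ne p$ the projection $\rho_q(R_q)\le\Hol(G_q)$ is abelian and regular, hence self-centralizing in $\sym(G_q)$, so as $R$ is abelian the $p$-group $\rho_q(R_p)$ commutes with the $q$-group $\rho_q(R_q)$ and must lie in it, forcing $\rho_q(R_p)=1$. Thus each $R_p$ is supported on the $p$-th factor, i.e.\ $R_p\le\Hol(G_p)$, and the same semiregularity reasoning shows $R_p$ is $G_p$-regular; moreover $R_p\le\rho_p(K)=(G_p)_r\rtimes A_p$ with $A_p:=\pi_p(A)$. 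It therefore suffices to prove $R_p=(G_p)_r$ for every $p$.

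To finish, I would split into two cases. If $p\nmid|A_p|$ then $(G_p)_r$ is a normal, hence unique, Sylow $p$-subgroup of $\rho_p(K)$, and the $p$-group $R_p$ of the same order coincides with it. The genuine case is $p\mid|A_p|$, where $\NCI^{(2)}(G_p)$ must be invoked with ambient group $\aut(\cyc(A_p,G_p))$, the $2$-closure of $\rho_p(K)$, which is $2$-closed and contains both $(G_p)_r$ and $R_p$. Here lies the main obstacle: normality is \emph{not} automatically inherited by the $p$-component, since $\aut(\cyc(A_p,G_p))$ can escape $\Hol(G_p)$ (for example when the induced scheme on $G_p$ collapses to a complete graph), so one cannot merely quote $\NCI^{(2)}(G_p)$. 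The heart of the proof is thus to show that global normality of $\cyc(A,G)$ together with $p\mid|A_p|$ forces $\cyc(A_p,G_p)$ to stay normal, excluding such degenerations; this I expect to control through the structure theory of S-rings over abelian $p$-groups. Once $\aut(\cyc(A_p,G_p))\le\Hol(G_p)$ is established, $\NCI^{(2)}(G_p)$ yields $R_p=(G_p)_r$, and assembling the factors gives $R=\prod_p(G_p)_r=G_r$.
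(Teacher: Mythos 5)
Your ``only if'' direction is sound and is essentially the paper's argument (Corollary~\ref{prodnonci}): tensor a witnessing normal non-$\CI$-$S$-ring over $G_p$ with the group ring of a complement and invoke Lemmas~\ref{tensnorm} and~\ref{tensci}. The Sylow decomposition of an abelian regular $R\leq\Hol(G)$ via the centralizer/order argument in your ``if'' direction is also correct as far as it goes.

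The gap is exactly the step you flag and then defer: you never prove that $\cyc(A_p,G_p)$ (or any $2$-closed overgroup of $\langle (G_p)_r,R_p\rangle$ lying inside $\Hol(G_p)$) is normal, and without that the hypothesis that $G_p$ is $\NCI^{(2)}$ gives you nothing. The obstruction is genuine, not a formality: for odd $p$ let $\sigma\in\aut(C_p^2)$ be the transvection $a\mapsto ab$, $b\mapsto b$; then $R_p=\langle a_r\sigma,\,b_r\rangle$ is a $C_p^2$-regular subgroup of $\Hol(C_p^2)$ distinct from $(C_p^2)_r$, even though $C_p^2$ \emph{is} $\NCI^{(2)}$. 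The point is that $\cyc(\langle\sigma\rangle,C_p^2)\cong\mathbb{Z}C_p\wr\mathbb{Z}C_p$ is not normal (its automorphism group has order $p^{p+1}$ and escapes $\Hol(C_p^2)$, consistent with Lemma~\ref{wrnorm}), and inside that larger group $R_p$ and $(C_p^2)_r$ are conjugate. So the mere existence of $R_p\neq(G_p)_r$ in $\Hol(G_p)$ does not contradict $\NCI^{(2)}$-ness of $G_p$, and your asserted implication ``global normality of $\cyc(A,G)$ forces $\cyc(A_p,G_p)$ normal'' is precisely the content that must be supplied; deferring it to ``the structure theory of $S$-rings over abelian $p$-groups'' is not a proof, and it is not clear the implication even holds for arbitrary normal $2$-closed $K$. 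The paper closes this hole by a device absent from your outline: Lemma~\ref{ncicrit} reduces to $K=\aut(\mathcal{A})$ minimal with respect to the $G$-completeness order $\preceq_G$, whereupon \cite[Lemma~5.2]{KM} (Lemma~\ref{minpring}) forces each $\mathcal{A}_{G_p}$ to be a $p$-$S$-ring; Lemma~\ref{sylowtens} then gives the genuine tensor decomposition $\mathcal{A}=\bigotimes_p\mathcal{A}_{G_p}$, so $\aut(\mathcal{A})=\prod_p\aut(\mathcal{A}_{G_p})$ and normality descends to every factor automatically (Lemma~\ref{tensnorm}), after which Lemma~\ref{tensci} finishes. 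Without the minimality reduction, or an equivalent substitute for it, your argument does not go through.
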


In~\cite{KM}, it was conjectured that a direct product of two $\CI$-groups of coprime orders is also $\CI$. Theorem~\ref{main1} implies that the similar conjecture is true in case of abelian $\NCI^{(2)}$-groups. Observe that Theorem~\ref{main1} does not hold for $\NCI$-groups. Indeed, the cyclic group of order~$8$ and any cyclic group of odd order are $\NCI$ by~\cite[Theorem~1.1]{XFRL}, whereas their direct product is non-$\NCI$ by the same statement.

Theorem~\ref{main1} reduces studying abelian $\NCI^{(2)}$-groups to studying abelian $p$-groups with this property. The second main result provides a complete description of abelian $\NCI^{(2)}$-$2$-groups. Given a positive integer $n$, the cyclic group of order~$n$ is denoted by $C_n$.

\begin{theo}\label{main2}
An abelian $2$-group is an $\NCI^{(2)}$-group if and only if it is isomorphic to $C_4$ or $C_2^k$, where $k\leq 5$.
\end{theo}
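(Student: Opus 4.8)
My plan is to handle the two implications separately, relying throughout on the reformulation recorded just before the theorem: since $G_r$ is normal in every $K\in\Sup_2(G_r)\cap\mathcal{L}(\Hol(G))$, such a $K$ is $G$-transjugate if and only if $G_r$ is its only $G$-regular subgroup. Hence $G$ fails to be $\NCI^{(2)}$ exactly when some $2$-closed $K$ with $G_r\le K\le\Hol(G)$ possesses a $G$-regular subgroup $H\neq G_r$. The sufficiency is then immediate: $C_4$ and the elementary abelian groups $C_2^k$ with $k\le 5$ are known to be $\CI^{(2)}$-groups, and every $\CI^{(2)}$-group is an $\NCI^{(2)}$-group because the latter property only quantifies over the subfamily $\Sup_2(G_r)\cap\mathcal{L}(\Hol(G))$.

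The substance is the necessity. First I would prove a monotonicity lemma: if $G=H\times L$ and $H$ is not $\NCI^{(2)}$, then neither is $G$. Given a witness $K_H\le\Hol(H)$ together with an extra $H$-regular subgroup $H'\neq H_r$, I set $K=K_H\times L_r\le\Hol(H)\times\Hol(L)\le\Hol(G)$. As $L_r$ is a regular abelian group it is $2$-closed, and since the $2$-closure of a direct product acting on the product is the product of the $2$-closures, $K$ is $2$-closed; moreover $H'\times L_r$ is a $G$-regular subgroup of $K$ distinct from $G_r=H_r\times L_r$. Because every non-cyclic, non-elementary-abelian abelian $2$-group has a direct factor of the form $C_{2^m}\times C_{2^n}$ with $m\ge 2$ or a cyclic factor of order at least $8$, and $C_2^6$ is a direct factor of each $C_2^k$ with $k\ge 6$, this lemma reduces the necessity to three base groups: $C_{2^n}$ with $n\ge 3$, $C_{2^m}\times C_{2^n}$ with $m\ge 2$, and $C_2^6$.

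For the first two families I would produce the extra regular subgroup by hand. Writing $\Hol(G)=G\rtimes\aut(G)$ with $(\phi,u)(\psi,w)=(\phi\psi,\phi(w)+u)$, in the cyclic case $G=\mathbb{Z}_{2^n}$ I take $\phi\colon u\mapsto(1+2^{n-1})u$ and $t=(\phi,1)$; a direct computation shows $\phi$ is an involution and that $t$ has order $2^n$ precisely when $n\ge 3$, so $\langle t\rangle$ is a cyclic $G$-regular subgroup other than $G_r$. (The same computation collapses for $C_4$, in agreement with the statement, and $\langle t\rangle$ recovers the regular subgroups behind Li's normal non-$\CI$ digraphs.) In the mixed case $G=\mathbb{Z}_{2^m}\oplus\mathbb{Z}_{2^n}$ I take the involution $\phi(i,j)=(i,j+2^{n-1}i)$ and the commuting elements $x=(\phi,(1,0))$ and $y=(\mathrm{id},(0,1))$, and check that $|x|=2^m$, $|y|=2^n$, $\langle x\rangle\cap\langle y\rangle=1$ and $\langle x,y\rangle$ is transitive, so that $H=\langle x,y\rangle\cong G$ is a second $G$-regular subgroup. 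In both cases it remains to place $H$ inside a suitable $2$-closed group: I would pass to the cyclotomic S-ring $\mathcal{A}=\cyc(\langle\phi\rangle,G)$, note that $H$ stabilises every $\phi$-orbit and hence $H\le\aut(\mathcal{A})$, and take $K=\aut(\mathcal{A})$.

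The hardest points will be the base group $C_2^6$ and the verification, in all base cases, that the witnessing $2$-closed group actually lies in $\Hol(G)$. For $C_2^6$ every element of $G$ has order $2$, so an extra regular subgroup must be built from affine involutions $(\phi,v)$ with $\phi^2=\mathrm{id}$ and $\phi(v)=v$; assembling a full elementary abelian group of such involutions that differs from $G_r$ requires a genuinely nontrivial (commutative, nilpotent) multiplicative structure on $\mathbb{F}_2^6$, and exhibiting one in dimension $6$ is the crux—the boundary at rank $5$ being explained by the cited $\CI^{(2)}$-property used for sufficiency. The other persistent obstacle is normality: since $\Hol(G)$ is itself not $2$-closed in general, one must argue that each relevant cyclotomic S-ring $\mathcal{A}$ is normal, i.e. $G_r\trianglelefteq\aut(\mathcal{A})$, so that $\aut(\mathcal{A})\le\Hol(G)$ and thus $K=\aut(\mathcal{A})$ is a legitimate member of $\Sup_2(G_r)\cap\mathcal{L}(\Hol(G))$ witnessing the failure of $\NCI^{(2)}$.
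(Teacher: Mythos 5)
Your architecture matches the paper's: sufficiency via the known $\CI^{(2)}$-property of $C_4$ and $C_2^k$ ($k\leq 5$), a direct-factor monotonicity reduction (the paper's Corollary~\ref{prodnonci}, proved there via tensor products of $S$-rings rather than at the permutation-group level), and explicit second regular subgroups for the base cases; your involution $\phi(i,j)=(i,j+2^{n-1}i)$ is essentially the paper's automorphism $\sigma$ for $C_4\times C_2$ and $C_4\times C_4$. However, the two points you yourself flag as ``the crux'' and ``the persistent obstacle'' are genuine gaps, not loose ends. The first is normality. To witness failure of the $\NCI^{(2)}$-property you need a $2$-closed $K$ with $G_r\leq K\leq \Hol(G)$ containing your second regular subgroup $R$. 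Since every $2$-closed group containing $G_r$ and $R$ contains $(G_rR)^{(2)}=\aut(\cyc(\langle\phi\rangle,G))$, if that group is not contained in $\Hol(G)$ then no admissible $K$ contains $R$ at all and the construction proves nothing. This containment is precisely the assertion that $\cyc(\langle\phi\rangle,G)$ is a normal $S$-ring, and nothing in your argument establishes it; the paper settles it by a computer calculation showing $\aut(\mathcal{A})=G_r\rtimes M$ for $C_4\times C_2$ and $C_4\times C_4$, and for $C_{2^k}$, $k\geq 3$, by citing \cite{Li}, where normality is part of the cited construction.

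The second gap is $C_2^6$: you correctly describe what is needed (a second elementary abelian regular subgroup of $\Hol(G)$ whose join with $G_r$ has $2$-closure inside $\Hol(G)$), but you do not produce it, and this case cannot be reached by your reduction from anything smaller. The paper takes the non-$\CI$ Cayley graph of \cite{Now}, identifies its automorphism group with $\aut(\cyc(M,G))$ for an explicit group $M$ of unitriangular automorphisms, and again verifies normality by computer. Until these verifications are supplied, the necessity direction is a plan rather than a proof. A smaller remark: your base family $C_{2^m}\times C_{2^n}$ with $m\geq 2$ is larger than necessary, since any group in it with a cyclic factor of order at least~$8$ is already covered; only $C_4\times C_2$ and $C_4\times C_4$ need separate treatment, which is what keeps the required computations finite.
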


Due to~\cite{FK} and Theorem~\ref{main2}, an abelian $2$-group is $\NCI^{(2)}$ if and only if it is $\CI^{(2)}$. The situation is another in case of abelian $p$-groups of odd order. Every abelian $\CI^{(2)}$-$p$-group of odd order must be elementary abelian. However, the class of abelian $\NCI^{(2)}$-$p$-groups of odd order is wider which is demonstrated by the following statement.

\begin{theo}\label{main3}
Let $p$ be an odd prime and $k\geq 1$ an integer. Then $C_{p^k}$ and $C_{p^k}\times C_p$ are $\NCI^{(2)}$-groups.
\end{theo}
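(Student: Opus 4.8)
The plan is to prove Theorem~\ref{main3} by showing directly that every $K\in \Sup_2(G_r)\cap \mathcal{L}(\Hol(G))$ is $G$-transjugate when $G=C_{p^k}$ or $G=C_{p^k}\times C_p$. By the remark at the end of the excerpt, since $G_r\trianglelefteq K$ for such $K$, it suffices to show that $G_r$ is the \emph{unique} $G$-regular subgroup of $K$. So the whole problem reduces to the following: given a $2$-closed group $K$ with $G_r\trianglelefteq K\leq \Hol(G)$, I must rule out the existence of a second regular subgroup $R\leq K$ with $R\cong G$ and $R\neq G_r$. I would phrase everything through Schur rings (the transfer principle connecting $2$-closed overgroups of $G_r$ with S-rings over $G$, already signposted by the keywords), since the $2$-closure condition is exactly what makes the combinatorial and the group-theoretic pictures coincide.

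First I would handle the cyclic case $G=C_{p^k}$. Here $\aut(G)$ is cyclic of order $p^{k-1}(p-1)$, and every regular subgroup $R$ of $K\leq \Hol(G)=G_r\rtimes\aut(G)$ projects isomorphically onto a complement-like image; the key structural fact is that a regular cyclic subgroup of $\Hol(C_{p^k})$ is determined by its generator, an element $g_r\sigma$ with $g_r\in G_r$ and $\sigma\in\aut(G)$, and regularity forces the order of $g_r\sigma$ to be $p^k$. Because $p$ is odd and $\aut(G)$ is cyclic, I expect to show that any such element, after conjugation by an element of $G_r$ (which stays inside $K$ since $G_r\trianglelefteq K$), can be normalized; the normality of $G_r$ in $K$ means the conjugation action of $K$ on $G_r\cong C_{p^k}$ is through a cyclic subgroup of $\aut(C_{p^k})$, and I would exploit that cyclic $p$-groups have a unique subgroup of each order to pin down the unipotent part. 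The upshot should be that any $G$-regular $R\leq K$ is $K$-conjugate to $G_r$, and since $G_r\trianglelefteq K$ this conjugate equals $G_r$.

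The second, harder case is $G=C_{p^k}\times C_p$. Now $\aut(G)$ is larger and non-abelian in general, and the lattice of regular subgroups of $\Hol(G)$ is richer, so the bare group-theoretic normalization argument is more delicate. Here I would lean on the S-ring machinery: a $2$-closed $K$ with $G_r\trianglelefteq K$ corresponds to a Schur ring $\cA$ over $G$, and $G$-transjugacy of $K$ translates into a \emph{Cayley isomorphism} statement about the S-ring, namely that any two Cayley representations of $\cA$ over $G$ are related by an element of $\aut(G)$. The plan is to analyze the possible S-rings over $C_{p^k}\times C_p$ whose automorphism group normalizes $G_r$, using the structure theory of S-rings over such groups (wreath/tensor/generalized-wreath decompositions, and the classification of primitive/rational components), and to verify the uniqueness of the regular subgroup case by case. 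The decomposition into a $p^k$-part and a $p$-part, together with the fact that the problematic obstructions for $\CI$ come only from the interaction of these layers, is what makes the ``$\times C_p$'' extension survive while larger extensions fail.

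The main obstacle I anticipate is precisely in the $C_{p^k}\times C_p$ case: controlling the ``mixed'' regular subgroups whose projection onto the $C_{p^k}$-direction and onto the $C_p$-direction are intertwined by a nontrivial automorphism, and showing that such subgroups cannot occur inside a $2$-closed $K$ unless they coincide with $G_r$. I expect that the right tool is a careful use of the normality $G_r\trianglelefteq K$ to force the conjugation action of $K$ on $G_r$ to preserve the characteristic subgroup $\Omega_1(G_r)\cong C_p\times C_p$ and the Frattini quotient, together with the $2$-closedness to rule out the ``extra'' regular subgroups that would otherwise appear in the full holomorph but are killed once one passes to a $2$-closed overgroup; combining these with the odd-$p$ hypothesis (so that $\aut(C_{p^k})$ has a cyclic Sylow $p$-subgroup) should close the argument.
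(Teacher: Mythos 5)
Your reduction to showing that $G_r$ is the unique $G$-regular subgroup of each $2$-closed $K$ with $G_r\trianglelefteq K\leq \Hol(G)$ is correct, and the $S$-ring framework is the right one, but both halves of your plan have a genuine gap. In the cyclic case the normalization argument cannot work as stated: since $G_r$ is normal in $\Hol(G)$, \emph{no} regular subgroup $R\neq G_r$ of $\Hol(G)$ is conjugate to $G_r$ there, and such subgroups genuinely exist for $k\geq 2$ --- for instance $R=\langle g_r\sigma\rangle$, with $g$ a generator of $G$ and $\sigma\colon x\mapsto x^{1+p}$, is regular, cyclic of order $p^k$, and distinct from $G_r$. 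So conjugating the generator $g_r\sigma$ by elements of $G_r$ proves nothing; the obstruction must come from $2$-closedness, i.e.\ from showing that $V(G_rR,G)$ cannot simultaneously be a normal $S$-ring and a $p$-$S$-ring. The paper does this by invoking the structure theory of circulant $S$-rings (\cite[Theorem~4.1]{EP3} and \cite[Lemma~5.2]{EP2}): an indecomposable circulant $S$-ring with trivial radical is cyclotomic with $|M|\leq p-1$, which is impossible for a $p$-$S$-ring, while a decomposable one contradicts normality. Your sketch contains no substitute for these inputs.

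The missing idea in both cases is the paper's Lemma~\ref{wrnorm} and Corollary~\ref{wrnonnorm}: a normal $S$-ring over an abelian group of \emph{odd} order cannot be a nontrivial generalized wreath product. This single fact is what turns the case-by-case analysis of $S$-rings over $C_{p^k}\times C_p$ --- which you leave entirely open, and which is hopeless in general because decomposable $S$-rings abound over this group --- into a short argument. After Lemma~\ref{noncinorm} produces a normal $p$-$S$-ring $\mathcal{A}=V(G_rR,G)$, one shows $C_p\times C_p\leq \O_\theta(\mathcal{A})$, and then a basic set $X$ containing an element of order $p^k$ either generates a complemented cyclic subgroup $U$ (whence $\mathcal{A}=\mathcal{A}_U\otimes\mathbb{Z}L$ is $\CI$ by the cyclic case together with Lemma~\ref{tensci}) or generates $G$ and, via Lemma~\ref{genrad}, forces a nontrivial generalized wreath decomposition, contradicting normality. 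Without identifying the indecomposability of normal $S$-rings of odd order as the engine, and without an explicit mechanism by which $2$-closedness excludes the extra regular subgroups that do live in $\Hol(G)$, your plan does not close.
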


As a corollary of Theorems~\ref{main1},~\ref{main2},~and~\ref{main3}, we obtain a complete description of cyclic $\NCI^{(2)}$-groups.

\begin{corl}\label{cycl}
A cyclic group is an $\NCI^{(2)}$-group if and only if its order is not divisible by~$8$. 
\end{corl}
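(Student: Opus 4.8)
The plan is to deduce Corollary~\ref{cycl} directly from the three main theorems together with the structural reduction provided by Theorem~\ref{main1}. Let $G=C_n$ be a cyclic group. First I would invoke Theorem~\ref{main1}: since $G$ is abelian, it is $\NCI^{(2)}$ if and only if every Sylow subgroup of $G$ is $\NCI^{(2)}$. For a cyclic group the Sylow subgroups are precisely the cyclic factors $C_{p^{k}}$ appearing in the prime factorization $n=\prod_p p^{k_p}$, so the problem reduces to checking, for each prime power $p^{k}$ dividing $n$, whether $C_{p^{k}}$ is an $\NCI^{(2)}$-group.

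Next I would treat the odd and even primes separately. For an odd prime $p$, Theorem~\ref{main3} asserts that $C_{p^{k}}$ is $\NCI^{(2)}$ for every $k\geq 1$, so the odd-order Sylow subgroups never obstruct the property. For $p=2$, Theorem~\ref{main2} gives a complete list of the abelian $\NCI^{(2)}$-$2$-groups; specialising that list to the cyclic case, the only cyclic groups appearing are $C_2$ and $C_4$ (the groups $C_2^{k}$ with $k\geq 2$ are non-cyclic). Hence the Sylow $2$-subgroup $C_{2^{k_2}}$ is $\NCI^{(2)}$ exactly when $k_2\leq 2$, i.e.\ when $8\nmid n$, and it fails to be $\NCI^{(2)}$ as soon as $k_2\geq 3$.

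Combining these two observations, every Sylow subgroup of $C_n$ is $\NCI^{(2)}$ precisely when the Sylow $2$-subgroup has order at most $4$, which is equivalent to $8\nmid n$; the odd Sylow subgroups impose no further constraint. By the equivalence from Theorem~\ref{main1}, this is exactly the condition for $C_n$ itself to be $\NCI^{(2)}$, which is the assertion of the corollary.

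I do not anticipate a genuine obstacle here, since the corollary is a formal consequence of the already-established theorems; the only point requiring a little care is the correct specialisation of Theorem~\ref{main2} to cyclic groups, namely recognising that among the groups $C_4$ and $C_2^{k}$ ($k\leq 5$) only $C_4$, $C_2=C_2^{1}$ are cyclic, so that the cyclic $\NCI^{(2)}$-$2$-groups are exactly those of order dividing $4$.
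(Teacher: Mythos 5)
Your proposal is correct and follows exactly the paper's intended derivation: reduce to Sylow subgroups via Theorem~\ref{main1}, apply Theorem~\ref{main3} for odd primes, and read off from Theorem~\ref{main2} that $C_2$ and $C_4$ are the only cyclic $\NCI^{(2)}$-$2$-groups. Nothing further is needed.
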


It should be mentioned that~\cite[Theorem~1.1]{XFRL} gives the similar criterion for a cyclic group to be $\NDCI$. Corollary~\ref{cycl} slightly specifies this result. The proof of~\cite[Theorem~1.1]{XFRL} explicitly deals with automorphism groups of Cayley digraphs over cyclic groups, whereas  Corollary~\ref{cycl} is obtained as an immediate consequence from more general results.

Every elementary abelian group of order at most~$p^5$, where $p$ is an odd prime, is $\CI^{(2)}$~\cite{FK}. In case of $\NCI^{(2)}$-groups, the similar result remains true for every abelian group.

\begin{theo}\label{main4}
Let $p$ be an odd prime. Then every abelian $p$-group of order at most~$p^5$ is an $\NCI^{(2)}$-group. 
\end{theo}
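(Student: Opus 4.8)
The plan is to prove Theorem~\ref{main4} by combining the reduction machinery already established with a case analysis over the isomorphism types of abelian $p$-groups of order at most $p^5$. By Theorem~\ref{main1}, since every group in question is already a $p$-group, there is nothing further to reduce across Sylow subgroups; the real content is to verify the $\NCI^{(2)}$-property for each abelian $p$-group $G$ with $|G| \in \{p, p^2, p^3, p^4, p^5\}$ and $p$ odd. Several of these are already handled: by Theorem~\ref{main3}, every cyclic group $C_{p^k}$ and every group of the form $C_{p^k} \times C_p$ is $\NCI^{(2)}$, which disposes of $C_p$, $C_{p^2}$, $C_{p^2}\times C_p$, $C_{p^3}$, $C_{p^3}\times C_p$, $C_{p^4}$, and $C_{p^4}\times C_p$. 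By~\cite{FK}, every elementary abelian group of order at most $p^5$ is $\CI^{(2)}$ and hence (being a stronger property) $\NCI^{(2)}$, which covers $C_p^k$ for $k\leq 5$. So the first step is to tabulate the partitions of $k$ for $k \le 5$ and cross off every isomorphism type already settled by Theorem~\ref{main3} or~\cite{FK}.

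The second step is to identify precisely which types remain. Running through the partitions, the outstanding cases are $C_{p^2}\times C_{p^2}$ (order $p^4$), together with the order-$p^5$ types $C_{p^3}\times C_{p^2}$, $C_{p^2}\times C_{p^2}\times C_p$, and $C_{p^3}\times C_p\times C_p$; here I should double-check against Theorem~\ref{main3}, which already gives $C_{p^3}\times C_p\times C_p$ only if it has the shape $C_{p^k}\times C_p$, which it does not, so that type genuinely remains. For each remaining $G$, the task by the criterion recalled at the end of the introduction reduces to showing that $G_r$ is the \emph{unique} $G$-regular subgroup of every $K \in \Sup_2(G_r)\cap \mathcal{L}(\Hol(G))$, equivalently that every $G$-regular subgroup $R \le K \le \Hol(G)$ with $G_r \trianglelefteq K$ coincides with $G_r$.

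The third step is the heart of the argument. Since $K \le \Hol(G) = G_r \rtimes \aut(G)$ and $G_r \trianglelefteq K$, any $G$-regular subgroup $R$ of $K$ projects isomorphically onto a subgroup of $\aut(G)$ under the quotient map $K \to K/G_r \hookrightarrow \aut(G)$; I would analyze $R$ through the cocycle, or equivalently through the graph of a homomorphism $R \to \aut(G)$, and use the $2$-closedness of $K$ together with the Schur-ring description of the orbits to force the corresponding automorphisms to be trivial. The structural input I expect to need is control over how $\aut(G)$ acts on the characteristic subgroup lattice of $G$ and on the layers $G[p^i]/G[p^{i-1}]$, since for abelian $p$-groups the relevant obstructions to transjugacy live in the interaction between the socle filtration and the regular action; for the small ranks here, the automorphism groups are explicit enough to carry this out type by type.

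The main obstacle will be the mixed-exponent cases, especially $C_{p^2}\times C_{p^2}$ and $C_{p^3}\times C_{p^2}$, where $G$ is neither homocyclic of exponent $p$ nor of the thin form $C_{p^k}\times C_p$, so neither~\cite{FK} nor Theorem~\ref{main3} applies and the automorphism group is large and nonabelian. For these I anticipate that the cleanest route is to invoke the Schur-ring / $2$-closure framework used for Theorem~\ref{main3}: translate "uniqueness of the $G$-regular subgroup" into a statement about the S-ring determined by the orbits of $K$, and show that any second $G$-regular subgroup would force a nontrivial automorphism fixing the partition of $G$ into the cells of that S-ring, which the rank constraint $|G|\le p^5$ rules out. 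If a fully uniform treatment proves awkward, the fallback is a finite case check over the at most four remaining isomorphism types, handling each by explicit examination of $\Sup_2(G_r)\cap\mathcal{L}(\Hol(G))$.
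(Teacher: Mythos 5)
Your first two steps are sound in spirit but already contain an error: the list of residual isomorphism types is incomplete. Theorem~\ref{main3} covers only $C_{p^k}$ and $C_{p^k}\times C_p$, and~\cite{FK} only the elementary abelian groups, so besides the four types you name you must also handle $C_{p^2}\times C_p\times C_p$ (order $p^4$) and $C_{p^2}\times C_p\times C_p\times C_p$ (order $p^5$). This is repairable, but it matters because the paper does not argue by isomorphism type at all: it fixes $n=|G|_p$-exponent-free data, assumes $G$ is non-$\NCI^{(2)}$, extracts via Lemma~\ref{noncinorm} a normal non-$\CI$ $p$-$S$-ring $\mathcal{A}=V(G_rR,G)=\cyc(M,G)$, and then works uniformly with the thin radical $V=\O_\theta(\mathcal{A})$.

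The genuine gap is your third step, which is a statement of intent rather than an argument. You never identify the mechanism that actually closes the cases: Corollary~\ref{wrnonnorm}, that a normal $S$-ring over an abelian group of odd order is indecomposable (i.e.\ is not a nontrivial generalized wreath product). The paper's entire proof consists of showing that the hypothetical $\mathcal{A}$ \emph{would} be decomposable, by pinning down $|V|$ (Lemma~\ref{orderthin} gives $p^2\le|V|\le|G|/p^2$), $|M|=|G|/|V|$, and the sizes and radicals of basic sets outside $V$ (Lemmas~\ref{sizeset}--\ref{contthin}, resting on Lemmas~\ref{notsizep},~\ref{interrad},~\ref{genrad}). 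Your proposed route --- cocycles into $\aut(G)$ plus ``the rank constraint rules out a nontrivial automorphism fixing the cells'' --- is not substantiated and does not obviously lead anywhere; in particular the hardest subcase ($n=5$, $|V|=p^3$, $V\cong C_p^3$, $G/V\cong C_p^2$) is resolved in the paper only by an explicit construction of an isomorphic $S$-ring $\widetilde{\mathcal{A}}$ over $C_p^5$ and a contradiction between $|\aut(\widetilde{\mathcal{A}})|=p^7$ and the count $p^8$ forced by~\cite[Lemmas~6.4,~6.5]{FK} together with the $\CI^{(2)}$-property of $C_p^5$. Nothing in your sketch anticipates this transfer. Finally, your fallback of ``explicit examination of $\Sup_2(G_r)\cap\mathcal{L}(\Hol(G))$'' for the remaining types is not a finite check, since $p$ ranges over all odd primes; a uniform argument in $p$ is exactly what is missing.
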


Given a positive integer~$n$, the set of all prime divisors of~$n$ is denoted by $\pi(n)$. Given an abelian group $G$ of order~$n$ and $p\in \pi(n)$, the Sylow $p$-subgroup of $G$ is denoted by $G_p$. Theorems~\ref{main1} and~\ref{main4} imply the following corollary.

\begin{corl}
Let $G$ be an abelian group of order~$n$. Suppose that $|G_p|\leq p^5$ for every $p\in \pi(n)$ and $G_2$ is isomorphic to $C_4$ or $C_2^k$, where $k\leq 5$. Then $G$ is an $\NCI^{(2)}$-group.
\end{corl}

We finish the introduction with a brief outline of the paper. A keynote tool used for the proofs of the main results is $S$-rings (Schur rings) (see~\cite{Schur,Wi}). We follow the general idea from~\cite{HM}. Namely, studying normal $\CI$-Cayley digraphs is reduced to studying normal $S$-rings possessing some additional properties. In fact, we show each of the above $S$-rings can be obtained as a tensor product of $p$-$S$-rings whose every basic set has a prime-power size. By that, we reduce the question on $\CI$-property for normal $S$-rings to this question for normal $p$-$S$-rings. Most of the $p$-$S$-rings over abelian $p$-groups from Theorems~\ref{main3} and~\ref{main4} can be obtained as a generalized wreath product of $S$-rings over smaller groups. However, a generalized wreath product of $S$-rings can be normal only in a few special cases (Lemma~\ref{wrnorm}) and hence there are a few normal $p$-$S$-rings over these groups.

Section~$2$ contains a necessary background of $S$-rings, especially, isomorphisms of them. In Section~$3$, we discuss the constructions of tensor and generalized wreath products of $S$-rings. In Section~$4$, we provide several properties of $p$-$S$-rings crucial for the proofs of the main results. The reduction of studying a $\CI$-property for (normal) Cayley digraphs to studying this property for (normal) $S$-rings is discussed in Section~$5$. In the presentation of the material of Sections~$2$-$5$, we follow~\cite{FK,KR}, in general. Theorems~\ref{main1}-\ref{main4} are proved in Sections~$6$-$9$, respectively.

\section{$S$-rings}

\subsection{Basic facts and definitions}

Let $G$ be a finite group and $\mathbb{Z}G$ the integer group ring. The identity element of $G$ is denoted by~$e$. The set of all orbits of $K\leq \sym(G)$ on $G$ is denoted by $\orb(K,G)$. If $X\subseteq G$, then the element $\sum \limits_{x\in X} {x}$ of the group ring $\mathbb{Z}G$ is denoted by~$\underline{X}$. The set $\{x^{-1}:x\in X\}$ is denoted by $X^{-1}$.

A subring  $\mathcal{A}\subseteq \mathbb{Z} G$ is called an \emph{$S$-ring} (a \emph{Schur} ring) over $G$ if there exists a partition $\mathcal{S}=\mathcal{S}(\mathcal{A})$ of~$G$ such that:

$(1)$ $\{e\}\in\mathcal{S}$;

$(2)$  if $X\in\mathcal{S}$, then $X^{-1}\in\mathcal{S}$;

$(3)$ $\mathcal{A}=\Span_{\mathbb{Z}}\{\underline{X}:\ X\in\mathcal{S}\}$.

\noindent The notion of an $S$-ring was introduced in~\cite{Schur} (see also~\cite[Chapter~IV]{Wi}). The elements of $\mathcal{S}$ are called the \emph{basic sets} of $\mathcal{A}$. One can verify that if $X,Y\in \mathcal{S}(\mathcal{A})$, then $XY\in \mathcal{S}(\mathcal{A})$ whenever $|X|=1$ or $|Y|=1$.

A set $T\subseteq G$ is called an \emph{$\mathcal{A}$-set} if $\underline{T}\in \mathcal{A}$ or, equivalently, $T$ is a union of some sets from $\mathcal{S}(\mathcal{A})$. If $T$ is an $\mathcal{A}$-set, then put $\mathcal{S}(\mathcal{A})_T=\{X\in \mathcal{S}(\mathcal{A}):~X\subseteq T\}$. A subgroup $H \leq G$ is called an \emph{$\mathcal{A}$-subgroup} if $H$ is an $\mathcal{A}$-set. One can verify that for every $\mathcal{A}$-set $T$, the groups $\langle T \rangle$ and $\rad(T)=\{g\in G:\ gT=Tg=T\}$ are $\mathcal{A}$-subgroups. 

Let $\{e\}\leq L \unlhd U\leq G$. A section $U/L$ is called an \emph{$\mathcal{A}$-section} if $U$ and $L$ are $\mathcal{A}$-subgroups. If $S=U/L$ is an $\mathcal{A}$-section, then the module
$$\mathcal{A}_S=Span_{\mathbb{Z}}\left\{\underline{X}^{\pi}:~X\in\mathcal{S}(\mathcal{A}),~X\subseteq U\right\},$$
where $\pi:U\rightarrow U/L$ is the canonical epimorphism, is an $S$-ring over $S$.

\begin{lemm}~\cite[Lemma~2.1]{EKP}\label{intersection0}
Let $\mathcal{A}$ be an $S$-ring over a group $G$, $H$ an $\mathcal{A}$-subgroup of $G$, and $X \in \mathcal{S}(\mathcal{A})$. Then the number $|X\cap Hx|$ does not depend on $x\in X$.
\end{lemm}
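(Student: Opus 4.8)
The plan is to pass to the integer group ring and exploit the single most important structural feature of an $S$-ring, namely that $\mathcal{A}$ is closed under multiplication. First I would observe that the two elements we need are already in $\mathcal{A}$: since $H$ is an $\mathcal{A}$-subgroup, $\underline{H}\in\mathcal{A}$ by definition, and since $X$ is a basic set, $\underline{X}\in\mathcal{A}$ as well. Because $\mathcal{A}$ is a subring of $\mathbb{Z}G$, the product $\underline{H}\,\underline{X}$ therefore again belongs to $\mathcal{A}$. This is the whole point of working with the group ring rather than with the sets directly.

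The key step is to read off the coefficients of $\underline{H}\,\underline{X}$ combinatorially. Expanding $\underline{H}\,\underline{X}=\sum_{h\in H}\sum_{x\in X}hx$, the coefficient of a fixed element $g\in G$ counts the pairs $(h,x)$ with $h\in H$, $x\in X$ and $hx=g$. As $h=gx^{-1}$ is determined by $x$, this coefficient equals $|\{x\in X:\ gx^{-1}\in H\}|$. Using $gx^{-1}\in H\iff Hg=Hx\iff x\in Hg$, the coefficient of $g$ in $\underline{H}\,\underline{X}$ is exactly $|X\cap Hg|$. The only thing to be careful about here is the left/right coset bookkeeping, so that the coset appearing is $Hx$ rather than $xH$; this is the one place where a sign-of-exponent slip could occur, and it is the main (though mild) obstacle.

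Finally I would invoke the defining property of an $S$-ring, that $\mathcal{A}=\Span_{\mathbb{Z}}\{\underline{Y}:Y\in\mathcal{S}(\mathcal{A})\}$ and the basic sets partition $G$: an element of $\mathbb{Z}G$ lies in $\mathcal{A}$ precisely when its coefficient function is constant on every basic set. Applying this to $\underline{H}\,\underline{X}\in\mathcal{A}$ and to the particular basic set $X$, the function $g\mapsto|X\cap Hg|$ is constant as $g$ ranges over $X$. Restricting to $g=x\in X$ yields the claim that $|X\cap Hx|$ is independent of $x\in X$. I do not expect any genuine difficulty beyond the coset check; an alternative route would avoid the ring structure and argue by a direct double count, but the group-ring argument is the cleanest and most self-contained.
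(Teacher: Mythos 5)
Your proof is correct. Note that the paper itself gives no argument for this statement: it is imported verbatim as Lemma~2.1 of~\cite{EKP}, so there is nothing internal to compare against. Your group-ring computation --- that the coefficient of $g$ in $\underline{H}\,\underline{X}\in\mathcal{A}$ equals $|X\cap Hg|$ (using $gx^{-1}\in H\iff x\in Hg$), combined with the fact that every element of $\mathcal{A}$ has constant coefficients on each basic set because the basic sets partition $G$ --- is exactly the standard proof of this lemma, and your coset bookkeeping is right.
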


Given $X\subseteq G$ and $m\in \mathbb{Z}$, put $X^{(m)}=\{x^m:~x\in X\}$.

\begin{lemm}\cite[Theorem~23.9(a)]{Wi}\label{burn}
Let $\mathcal{A}$ be an $S$-ring over an abelian group $G$. Then $X^{(m)}\in \mathcal{S}(\mathcal{A})$ for every $X\in \mathcal{S}(\mathcal{A})$ and every $m\in \mathbb{Z}$ coprime to~$|G|$.
\end{lemm}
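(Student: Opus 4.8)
The plan is to prove the statement first for a prime $p$ not dividing $|G|$ and then bootstrap to arbitrary $m$. For the reduction, I would note that since $\gcd(m,|G|)=1$ every prime divisor of $m$ is coprime to $|G|$; writing $m=\pm p_1\cdots p_k$ with each $p_i\nmid|G|$ and using $X^{(m)}=(\cdots(X^{(p_1)})^{(p_2)}\cdots)^{(p_k)}$ together with the inverse-closure of $\mathcal{S}(\mathcal{A})$ to absorb the sign, it suffices to treat $m=p$ prime with $p\nmid|G|$. Throughout I would use that $p\nmid|G|$ forces $\gcd(p,\exp(G))=1$, so the map $\sigma_p\colon g\mapsto g^p$ is a bijection (indeed an automorphism) of $G$; in particular $x\mapsto x^p$ is injective on every subset $X$, so $|X^{(p)}|=|X|$ and $\underline{X^{(p)}}=\sum_{x\in X}x^p$.

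The heart of the argument is a mod-$p$ computation with the element $\underline{X}^{\,p}$, which lies in $\mathcal{A}$ because $\mathcal{A}$ is a ring and $\underline{X}\in\mathcal{A}$. Writing $\underline{X}^{\,p}=\sum_{g\in G}a_g\,g$, the coefficient $a_g$ counts the ordered tuples $(x_1,\dots,x_p)\in X^{p}$ with $x_1\cdots x_p=g$. Since $G$ is abelian, the cyclic group $\mathbb{Z}/p\mathbb{Z}$ acts on each such fibre by coordinate rotation, its fixed points being exactly the constant tuples $(x,\dots,x)$ with $x^p=g$. As $p$ is prime, every non-fixed orbit has size $p$, so $a_g$ is congruent modulo $p$ to the number of $x\in X$ with $x^p=g$; by injectivity of $\sigma_p$ this number is $1$ if $g\in X^{(p)}$ and $0$ otherwise. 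Hence $a_g\equiv 1\pmod p$ for $g\in X^{(p)}$ and $a_g\equiv 0\pmod p$ for $g\notin X^{(p)}$, i.e. the set $\{g:\ a_g\not\equiv 0\pmod p\}$ equals $X^{(p)}$ exactly. This is the Frobenius identity $\underline{X}^{\,p}\equiv\underline{X^{(p)}}\pmod p$, read off combinatorially.

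Next I would invoke the defining feature of an $S$-ring: since $\mathcal{A}=\Span_{\mathbb{Z}}\{\underline{Y}:\ Y\in\mathcal{S}(\mathcal{A})\}$ is free on the indicators of the basic sets, the coefficient function $g\mapsto a_g$ of any element of $\mathcal{A}$ is constant on each basic set. Therefore $\{g:\ a_g\not\equiv 0\pmod p\}$ is a union of basic sets, so its indicator $\underline{X^{(p)}}$ belongs to $\mathcal{A}$; that is, $X^{(p)}$ is at least an $\mathcal{A}$-set. To upgrade this to ``single basic set'', I would consider the image $\mathcal{A}^{\sigma_p}$ of $\mathcal{A}$ under the ring automorphism of $\mathbb{Z}G$ induced by $\sigma_p$: it is again an $S$-ring (here one uses $\sigma_p(e)=e$, $(X^{-1})^{(p)}=(X^{(p)})^{-1}$, and that $\sigma_p$ permutes $G$), and its basic sets are precisely the sets $X^{(p)}$, $X\in\mathcal{S}(\mathcal{A})$. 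The previous paragraph shows $\underline{X^{(p)}}\in\mathcal{A}$ for every $X$, i.e. $\mathcal{A}^{\sigma_p}\subseteq\mathcal{A}$. Since $\sigma_p$ is a bijection of $G$, the two $S$-rings have the same number of basic sets, and an inclusion of $S$-rings with equally many basic sets forces the two partitions of $G$ to coincide. Thus $\{X^{(p)}:\ X\in\mathcal{S}(\mathcal{A})\}=\mathcal{S}(\mathcal{A})$, giving $X^{(p)}\in\mathcal{S}(\mathcal{A})$, and the reduction of the first paragraph then yields the claim for all $m$.

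I expect the main obstacle to be the final upgrade from ``$X^{(p)}$ is an $\mathcal{A}$-set'' to ``$X^{(p)}$ is a basic set'': the mod-$p$ computation only locates $X^{(p)}$ as the support of one specific $\mathcal{A}$-element, and one must argue separately that no genuine merging of basic sets occurs. The partition-refinement comparison of $\mathcal{A}^{\sigma_p}$ and $\mathcal{A}$ is the cleanest way I see to close this gap; the remaining points (the orbit-counting congruence and the verification that $\mathcal{A}^{\sigma_p}$ is an $S$-ring) are routine once the set-up is in place.
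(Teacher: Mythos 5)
The paper gives no proof of this lemma---it is quoted directly from Wielandt [Wi, Theorem~23.9(a)]---and your argument is precisely the classical Schur--Wielandt proof of that result: the Frobenius congruence $\underline{X}^{\,p}\equiv\underline{X^{(p)}}\pmod p$ combined with the constancy of coefficients of elements of $\mathcal{A}$ on basic sets, followed by the rank/refinement comparison of $\mathcal{A}^{\sigma_p}$ with $\mathcal{A}$. Your proof is correct, including the reduction to prime $m$ and the final upgrade from ``$X^{(p)}$ is an $\mathcal{A}$-set'' to ``$X^{(p)}$ is a basic set''.
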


If $X,Y\subseteq G$ and $Y=X^{(m)}$ for some $m\in \mathbb{Z}$ coprime to~$|G|$, then we say that $X$ and $Y$ are \emph{rationally conjugate}.

\subsection{Isomorphisms and schurity}

Let $\mathcal{A}$ and $\mathcal{A}^\prime$ be $S$-rings over groups $G$ and $G^\prime$, respectively. A bijection $f$ from $G$ to $G^\prime$ is called a (\emph{combinatorial}) \emph{isomorphism} from $\mathcal{A}$ to $\mathcal{A}^\prime$ if for every $X\in \mathcal{S}(\mathcal{A})$ there is $X^{\prime}\in \mathcal{S}(\mathcal{A}^\prime)$ such that $\{(x^f,y^f)\in G\times G:~yx^{-1}\in X\}=\{(x^\prime,y^\prime):~y^\prime(x^\prime)^{-1}\in X^\prime\}$, i.e. $f$ is an isomorphism from $\cay(G,X)$ to $\cay(G^\prime,X^\prime)$. If there is an isomorphism from $\mathcal{A}$ to $\mathcal{A}^\prime$, we say that $\mathcal{A}$ and $\mathcal{A}^\prime$ are \emph{isomorphic}. Put 
$$\iso(\mathcal{A})=\{f\in \sym(G):~f~\text{is an isomorphism from}~\mathcal{A}~\text{onto an}~\text{$S$-ring over}~G\}.$$

A bijection $f\in\sym(G)$ is defined to be a \emph{(combinatorial) automorphism} of $\mathcal{A}$ if for all $x,y\in G$, the basic sets containing the elements $yx^{-1}$ and $y^f(x^{-1})^f$ coincide. The set of all automorphisms of $\mathcal{A}$ forms a group called the \emph{automorphism group} of $\mathcal{A}$ and denoted by $\aut(\mathcal{A})$. One can see that $\aut(\mathcal{A})\in \Sup_2(G_r)$. The $S$-ring $\mathcal{A}$ is said to be \emph{normal} if $G_r$ is normal in $\aut(\mathcal{A})$ or, equivalently, $\aut(\mathcal{A})\leq \Hol(G)$. The set of all right cosets by an $\mathcal{A}$-subgroup forms an imprimitivity system for $\aut(\mathcal{A})$. If $K\leq \aut(\mathcal{A})$ and $L$ is an $\mathcal{A}$-subgroup of $G$, then the permutation group on $G/L$ induced by $K$ is denoted by $K^{G/L}$.

Let $K\in \Sup(G_r)$. The $\mathbb{Z}$-submodule
$$V(K,G)=\Span_{\mathbb{Z}}\{\underline{X}:~X\in \orb(K_e,G)\}$$
is an $S$-ring over $G$ as it was proved by Schur~\cite{Schur}. An $S$-ring $\mathcal{A}$ over $G$ is called \emph{schurian} if $\mathcal{A}=V(K,G)$ for some $K\in \Sup(G_r)$. One can verify that $K^{(2)}=\aut(\mathcal{A})$ and $\mathcal{A}$ is schurian if and only if $\mathcal{A}=V(\aut(\mathcal{A}),G)$. So the mappings 
$$\mathcal{A}\mapsto \aut(\mathcal{A}),~K\mapsto V(K,G)$$
are mutually inverse one-to-one correspondences between the schurian $S$-rings over $G$ and the groups from $\Sup_2(G_r)$ and vice versa, respectively. Observe that the above mappings are also mutually inverse one-to-one correspondences between the normal schurian $S$-rings over $G$ and the groups from $\Sup_2(G_r)\cap \mathcal{L}(\Hol(G))$ and vice versa, respectively. It is easy to see that $K_1\leq K_2$ if and only if $V(K_1,G)\geq V(K_2,G)$ for all $K_1,K_2\in \Sup(G_r)$.

An $S$-ring $\mathcal{A}$ over a group $G$ is called \emph{cyclotomic} if there exists $M\leq\aut(G)$ such that $\mathcal{S}(\mathcal{A})=\orb(M,G)$. In this case, $\mathcal{A}$ is denoted by $\cyc(M,G)$. Obviously, $\mathcal{A}=V(G_r M,G)$. So every cyclotomic $S$-ring is schurian. The lemma below immediately follows from definitions.

\begin{lemm}\label{cyclchar}
If $\mathcal{A}$ is a cyclotomic $S$-ring over a group $G$, then every characteristic subgroup of $G$ is an $\mathcal{A}$-subgroup.
\end{lemm}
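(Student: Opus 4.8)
The plan is simply to unwind the two definitions involved. By hypothesis $\mathcal{A}=\cyc(M,G)$ for some $M\leq \aut(G)$, which means that the basic sets of $\mathcal{A}$ are precisely the orbits of $M$ on $G$, i.e. $\mathcal{S}(\mathcal{A})=\orb(M,G)$. Recall that a subgroup $H\leq G$ is an $\mathcal{A}$-subgroup exactly when $H$ is a union of basic sets. Since the basic sets are here the $M$-orbits, this happens if and only if $H$ is $M$-invariant, that is, $H^\mu=H$ for every $\mu\in M$. So I would reduce the whole task to checking that a characteristic subgroup is $M$-invariant.

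For the key step, let $H$ be a characteristic subgroup of $G$. By definition this means $H^\sigma=H$ for every $\sigma\in \aut(G)$. As $M\leq \aut(G)$, in particular $H^\mu=H$ for every $\mu\in M$, so $H$ is $M$-invariant. Hence $H$ is a union of $M$-orbits, i.e. a union of basic sets of $\mathcal{A}$, and therefore $\underline{H}\in \mathcal{A}$. Since $H$ is moreover a subgroup of $G$, it is an $\mathcal{A}$-subgroup, as required.

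There is essentially no obstacle: the statement is an immediate consequence of the definitions, which is exactly why the paper records it as following ``immediately from definitions''. The only point to keep in mind is that the $M$-invariance condition is genuinely symmetric, since $M$ is a group: once $H^\mu\subseteq H$ holds for all $\mu\in M$, applying $\mu^{-1}\in M$ forces $H^\mu=H$, so that $H$ decomposes as a disjoint union of full $M$-orbits rather than merely being mapped into itself.
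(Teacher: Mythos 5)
Your proof is correct and is exactly the routine unwinding of definitions that the paper has in mind when it says the lemma ``immediately follows from definitions'' (the paper gives no explicit proof). A characteristic subgroup is invariant under $M\leq\aut(G)$, hence is a union of $M$-orbits, i.e.\ of basic sets of $\cyc(M,G)$, which is precisely the claim.
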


\subsection{Thin radical}

The \emph{thin radical} of $\mathcal{A}$ is defined to be the set 
$$\O_\theta(\mathcal{A})=\{x\in G:~\{x\} \in \mathcal{S}(\mathcal{A})\}.$$ 
It is easy to check that $\O_\theta(\mathcal{A})$ is an $\mathcal{A}$-subgroup.

The following easy lemma follows from the fact that a product of two basic sets of an $S$-ring is also a basic set whenever one of these sets is a singleton.

\begin{lemm}\label{groupring}
Let $\mathcal{A}$ be an $S$-ring over a group $G$ and $X\subseteq \O_\theta(\mathcal{A})$ such that $\langle X \rangle=G$. Then $\mathcal{A}=\mathbb{Z}G$.
\end{lemm}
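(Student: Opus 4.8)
The plan is to show that the hypotheses force the thin radical $\O_\theta(\mathcal{A})$ to exhaust all of $G$, which is exactly the condition $\mathcal{A}=\mathbb{Z}G$. Indeed, $\mathcal{A}=\mathbb{Z}G$ holds precisely when the defining partition $\mathcal{S}(\mathcal{A})$ consists of all singletons, that is, when $\{g\}\in\mathcal{S}(\mathcal{A})$ for every $g\in G$; by the definition of the thin radical this says nothing more than $\O_\theta(\mathcal{A})=G$. So the whole lemma reduces to verifying this single equality.

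First I would recall that $\O_\theta(\mathcal{A})$ is a subgroup of $G$, and pin down why. The only nontrivial point is closure under multiplication, and this is exactly where the product rule for basic sets does the work: if $x,y\in\O_\theta(\mathcal{A})$, then $\{x\}$ and $\{y\}$ are both basic sets, and since one of them is a singleton their product $\{x\}\{y\}=\{xy\}$ is again a basic set, whence $xy\in\O_\theta(\mathcal{A})$. Combined with axiom $(1)$, which gives $e\in\O_\theta(\mathcal{A})$, and axiom $(2)$, which gives $x^{-1}\in\O_\theta(\mathcal{A})$ whenever $x\in\O_\theta(\mathcal{A})$, this shows that $\O_\theta(\mathcal{A})$ is closed under the group operations and hence is a subgroup of $G$ (in fact an $\mathcal{A}$-subgroup, as already noted in the text).

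With this in hand the conclusion is immediate: since $X\subseteq\O_\theta(\mathcal{A})$ and $\O_\theta(\mathcal{A})$ is a subgroup, it contains the subgroup generated by $X$, so $G=\langle X\rangle\leq\O_\theta(\mathcal{A})\leq G$, forcing $\O_\theta(\mathcal{A})=G$ and therefore $\mathcal{A}=\mathbb{Z}G$. I do not expect any genuine obstacle here; the entire content is the observation that a singleton times anything remains a basic set, so the set of elements carried by singleton basic sets is closed under multiplication. The write-up should be only a few lines, reducing everything to the product rule for basic sets together with the first two $S$-ring axioms.
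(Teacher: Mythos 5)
Your argument is correct and is exactly the one the paper intends: the paper gives no written proof beyond remarking that the lemma "follows from the fact that a product of two basic sets is a basic set whenever one of these sets is a singleton," which is precisely the closure property you use to show $\O_\theta(\mathcal{A})$ is a subgroup containing $\langle X\rangle=G$. Nothing is missing.
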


\begin{lemm}\label{thincoset}
Let $\mathcal{A}$ be an $S$-ring over an abelian group $G$, $V=\O_{\theta}(\mathcal{A})$, and $X\in \mathcal{S}(\mathcal{A})$. Then $Xx^{-1}\cap V$ is a subgroup of $\rad(X)$ not depending on $x\in X$. 
\end{lemm}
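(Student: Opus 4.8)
The plan is to establish the stronger identity $Xx^{-1}\cap V=\rad(X)\cap V$. Since the right-hand side is an intersection of two $\mathcal{A}$-subgroups of $G$ and makes no reference to $x$, proving this equality simultaneously delivers both assertions of the lemma: $Xx^{-1}\cap V$ is then a subgroup contained in $\rad(X)$, and it is independent of the choice of $x\in X$. Throughout I use that $G$ is abelian, so that $\rad(X)=\{g\in G:~gX=X\}$, and that $V=\O_\theta(\mathcal{A})$ consists of the thin elements, i.e. those $v$ with $\{v\}\in\mathcal{S}(\mathcal{A})$.

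The heart of the argument is the forward inclusion $Xx^{-1}\cap V\subseteq \rad(X)$. Fix $x\in X$ and take $v\in Xx^{-1}\cap V$. Because $v\in V$, the singleton $\{v\}$ is a basic set, so by the remark that a product of two basic sets is again basic whenever one factor is a singleton, the set $vX$ belongs to $\mathcal{S}(\mathcal{A})$. On the other hand, $v\in Xx^{-1}$ means precisely that $vx\in X$, while trivially $vx\in vX$ since $x\in X$. Thus the two basic sets $X$ and $vX$ share the common element $vx$; as $\mathcal{S}(\mathcal{A})$ is a partition of $G$, distinct basic sets are disjoint, so we must have $vX=X$, that is, $v\in\rad(X)$. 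This shows $Xx^{-1}\cap V\subseteq\rad(X)$, and since clearly $Xx^{-1}\cap V\subseteq V$, we obtain $Xx^{-1}\cap V\subseteq \rad(X)\cap V$.

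For the reverse inclusion, suppose $w\in\rad(X)\cap V$. Then $wX=X$ forces $wx\in X$, hence $w\in Xx^{-1}$, and $w\in V$ by hypothesis; therefore $w\in Xx^{-1}\cap V$. Combining the two inclusions gives $Xx^{-1}\cap V=\rad(X)\cap V$, and the lemma follows.

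I expect the only genuine step to be the forward inclusion: the key observation is that multiplying a thin element into a basic set produces another basic set, after which the partition property of $\mathcal{S}(\mathcal{A})$ immediately forces the two basic sets to coincide. Everything else—the reverse inclusion and the deduction that the common set is a subgroup independent of $x$—is then purely formal, and no subtlety involving the structure of $G$ beyond commutativity is needed.
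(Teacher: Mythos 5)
Your proof is correct and rests on the same key mechanism as the paper's: multiplying $X$ by a thin element yields another basic set, which must coincide with $X$ since basic sets partition $G$. Your packaging of the conclusion as the identity $Xx^{-1}\cap V=\rad(X)\cap V$ is a slight streamlining (it makes the subgroup property and the independence of $x$ immediate, where the paper argues these separately via differences $a^{-1}b$), but it is not a genuinely different route.
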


\begin{proof}
At first, let us prove that $(Xx^{-1}\cap V)\leq \rad(X)$ for every $x\in X$. Clearly, $e\in Xx^{-1}\cap V$ and we are done if $|Xx^{-1}\cap V|=1$. Further, we assume that $|Xx^{-1}\cap V|\geq 2$. Let $a,b\in Xx^{-1}\cap V$ such that $a\neq b$. Since $a,b\in V$, we obtain $a^{-1}b\in V$ and hence $\{a^{-1}b\}\in \mathcal{S}(\mathcal{A})$. So $Xa^{-1}b\in \mathcal{S}(\mathcal{A})$. One can see that $bx\in Xa^{-1}b\cap X\neq \varnothing$. Therefore $Xa^{-1}b=X$. This implies that $a^{-1}b\in Xx^{-1}\cap V \cap \rad(X)$ for all $a\neq b\in Xx^{-1}\cap V$ and we are done. 

If $x,y\in X$, then $(Xx^{-1}\cap V)\leq \rad(X)$ and $(Xy^{-1}\cap V)\leq \rad(X)$. So $(Xx^{-1}\cap V)\subseteq Xy^{-1}$ and $(Xy^{-1}\cap V)\subseteq Xx^{-1}$, i.e. $Xx^{-1}\cap V=Xy^{-1}\cap V$ which proves that the group $Xx^{-1}\cap V$ does not depend on $x\in X$.
\end{proof}

The next lemma immediately follows from~\cite[Eq.~(4)]{FK}.

\begin{lemm}\label{thincenter}
Let $\mathcal{A}$ be a schurian $S$-ring over an abelian group~$G$ and $V=\O_{\theta}(\mathcal{A})$. Then $V_r=G_r\cap Z(\aut(\mathcal{A}))$.
\end{lemm}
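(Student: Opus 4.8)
The plan is to prove the two inclusions $V_r\subseteq G_r\cap Z(\aut(\mathcal{A}))$ and $G_r\cap Z(\aut(\mathcal{A}))\subseteq V_r$ separately, translating throughout between the combinatorial automorphism condition and group-theoretic commutation in $\sym(G)$. Write $K=\aut(\mathcal{A})$. Since $\mathcal{A}$ is schurian, $\mathcal{A}=V(K,G)$, so $\mathcal{S}(\mathcal{A})=\orb(K_e,G)$; consequently a singleton $\{g\}$ is a basic set exactly when $g$ is fixed by the point stabilizer $K_e$, i.e. $V=\O_\theta(\mathcal{A})=\Fix(K_e)$. I will also use the factorization $K=K_e G_r$: for $\sigma\in K$ the element $\tau=\sigma h_r^{-1}$ with $h=e^\sigma$ satisfies $e^\tau=e$, so $\sigma=\tau h_r$ with $\tau\in K_e$ and $h_r\in G_r$. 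Since $G$ is abelian, $G_r$ is abelian, so any $g_r\in G_r$ commutes with $h_r$; hence $g_r$ is central in $K$ if and only if $g_r$ commutes with every $\tau\in K_e$.

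For the inclusion $V_r\subseteq G_r\cap Z(K)$, I would fix $g\in V$ and show $g_r$ commutes with every $\tau\in K_e$. The key observation is that $g_r\tau=\tau g_r$ is equivalent to $(xg)^\tau=x^\tau g$ for all $x$ (using that $g_r\colon x\mapsto xg$), and this identity is forced by the singleton color $\{g\}$. Indeed, viewing the arc from $x$ to $xg$ in $\cay(G,\{g\})$, its difference is $g\in\{g\}$; applying the automorphism condition for $\tau$, the image arc from $x^\tau$ to $(xg)^\tau$ must again have difference in the basic set $\{g\}$, whence $(xg)^\tau (x^\tau)^{-1}=g$, i.e. $(xg)^\tau=x^\tau g$. (Here commutativity of $G$ is used to identify the difference with $g$.) Thus $g_r$ commutes with $K_e$ and therefore with all of $K$, giving $g_r\in G_r\cap Z(K)$.

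For the reverse inclusion, I would take $g_r\in G_r\cap Z(K)$; then $g_r$ commutes with every $\tau\in K_e$, so the same identity $(xg)^\tau=x^\tau g$ holds. Evaluating at $x=e$ and using $e^\tau=e$ yields $g^\tau=g$ for all $\tau\in K_e$, so $g\in\Fix(K_e)=V$ by schurianness, i.e. $g_r\in V_r$. Combining the two inclusions gives $V_r=G_r\cap Z(\aut(\mathcal{A}))$.

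The main obstacle, and the only genuinely non-formal step, is the forward direction's passage from the singleton basic set $\{g\}$ to the pointwise identity $(xg)^\tau=x^\tau g$; everything else reduces to the routine factorization $K=K_e G_r$ and the elementary fact that the abelian group $G_r$ is centralized by each of its elements. I would double-check that schurianness is invoked precisely where needed, namely to guarantee both that $\{g\}$ is a basic set when $g\in V$ (used in the forward direction) and that $\Fix(K_e)$ equals $V$ rather than merely containing it (used in the reverse direction): for a non-schurian $S$-ring one has only $\O_\theta(\mathcal{A})\subseteq\Fix(K_e)$, since singleton $K_e$-orbits may be merged into larger basic sets, so the schurian hypothesis is essential.
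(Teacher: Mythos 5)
Your proof is correct. Note that the paper does not actually prove this lemma at all: it is stated with the remark that it ``immediately follows from~\cite[Eq.~(4)]{FK}'', so you have supplied a self-contained elementary argument where the paper defers to a citation. Your route is the natural one: identify $V$ with $\Fix(\aut(\mathcal{A})_e)$ via schurianness, use the factorization $\aut(\mathcal{A})=\aut(\mathcal{A})_e G_r$ together with commutativity of $G_r$ to reduce centrality of $g_r$ to commutation with the point stabilizer, and observe that commutation with $\tau\in\aut(\mathcal{A})_e$ is literally the identity $(xg)^\tau=x^\tau g$, which for $g\in V$ is forced by the singleton basic set $\{g\}$ and which conversely, evaluated at $x=e$, forces $g\in\Fix(\aut(\mathcal{A})_e)$. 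One small correction to your closing remarks: schurianness is \emph{not} needed in the forward direction. That $\{g\}$ is a basic set for $g\in V$ is the definition of the thin radical $\O_\theta(\mathcal{A})$, so the inclusion $V_r\subseteq G_r\cap Z(\aut(\mathcal{A}))$ holds for an arbitrary $S$-ring; schurianness is used exactly once, in the reverse inclusion, to upgrade $V\subseteq\Fix(\aut(\mathcal{A})_e)$ to equality. This does not affect the validity of the proof.
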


\begin{lemm}\label{intersection}
Let $G$ be an abelian group, $R\neq G_r$ a $G$-regular subgroup of $\sym(G)$, $K=\langle G_r,R\rangle$, $\mathcal{A}=V(K,G)$, and $V=\O_{\theta}(\mathcal{A})$. Then $G_r\cap R=V_r$.
\end{lemm}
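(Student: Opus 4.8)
The plan is to identify $\O_\theta(\mathcal{A})$ with the set of fixed points of the point stabilizer $K_e$ and then to exploit that a regular abelian permutation group equals its own centralizer in $\sym(G)$. Throughout I would write $M=\aut(\mathcal{A})$; since $\mathcal{A}=V(K,G)$ we have $M=K^{(2)}\geq K=\langle G_r,R\rangle$, so both $G_r$ and $R$ lie in $M$. As $G$ is abelian and $R\cong G$ is abelian, both $G_r$ and $R$ are regular abelian subgroups of $\sym(G)$, and I would use the standard fact that such a subgroup is self-centralizing, i.e. $C_{\sym(G)}(G_r)=G_r$ and $C_{\sym(G)}(R)=R$.

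For the inclusion $V_r\subseteq G_r\cap R$ I would invoke Lemma~\ref{thincenter}, which gives $V_r=G_r\cap Z(M)$. Any element of $Z(M)$ centralizes the subgroup $R\leq M$, hence lies in $C_{\sym(G)}(R)=R$; combined with $V_r\leq G_r$ this yields $V_r\subseteq G_r\cap R$.

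For the reverse inclusion $G_r\cap R\subseteq V_r$ I would argue directly, without Lemma~\ref{thincenter}. Take the right translation $g_r\in G_r\cap R$. Since $g_r\in G_r$ it centralizes the abelian group $G_r$, and since $g_r\in R$ it centralizes the abelian group $R$; hence $g_r$ centralizes $K=\langle G_r,R\rangle$ and in particular commutes with every $k\in K_e$. Evaluating the identity $g_r k=k g_r$ at the point $e$ and using $e^k=e$ gives $g^k=(eg)^k=(e^{g_r})^k=e^{g_r k}=e^{k g_r}=(e^k)^{g_r}=eg=g$, so $g$ is fixed by all of $K_e$. Because the basic sets of $\mathcal{A}=V(K,G)$ are exactly the orbits $\orb(K_e,G)$, this means $\{g\}\in\mathcal{S}(\mathcal{A})$, i.e. $g\in\O_\theta(\mathcal{A})=V$ and $g_r\in V_r$. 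The two inclusions together give $G_r\cap R=V_r$.

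The two directions are genuinely asymmetric. The inclusion $G_r\cap R\subseteq V_r$ is an elementary commutation computation, whereas $V_r\subseteq G_r\cap R$ rests on the self-centralizing property of regular abelian groups together with Lemma~\ref{thincenter}, whose content is precisely that for a schurian $S$-ring over an abelian group the thin radical is realized by the center of the automorphism group. I expect the only points needing care to be the bookkeeping of the action convention in the step $g_r k=k g_r\Rightarrow g^k=g$, and checking that passing from $K$ to its $2$-closure $M$ (needed to apply Lemma~\ref{thincenter}) does not enlarge the relevant intersection — which it does not, since every element of $G_r\cap Z(M)$ already centralizes $R\leq K$.
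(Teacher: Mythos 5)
Your proof is correct. The direction $V_r\subseteq G_r\cap R$ is exactly the paper's argument: apply Lemma~\ref{thincenter} to get $V_r=G_r\cap Z(\aut(\mathcal{A}))$ and then use that the abelian regular group $R$ is self-centralizing in $\sym(G)$, so $Z(\aut(\mathcal{A}))\leq R$. For the converse inclusion the paper takes a different route: from $G_r\cap R\leq Z(K)$ it invokes \cite[Proposition~2.1]{FK} to pass to the $2$-closure, $Z(K)\leq Z(K^{(2)})=Z(\aut(\mathcal{A}))$, and then intersects with $G_r$ and applies Lemma~\ref{thincenter} once more. You instead stop at $Z(K)$ and argue by hand: an element $g_r\in G_r\cap R$ commutes with every $k\in K_e$, and evaluating $g_rk=kg_r$ at $e$ shows $g$ is fixed by $K_e$, so $\{g\}\in\orb(K_e,G)=\mathcal{S}(\mathcal{A})$ and $g\in \O_\theta(\mathcal{A})$. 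This is a genuine simplification of that half: it is self-contained, avoids the external fact $Z(K)\leq Z(K^{(2)})$, and in effect re-proves the needed half of Lemma~\ref{thincenter} at the level of $K$ rather than its $2$-closure. What the paper's version buys is symmetry — both inclusions are read off from the single identity $V_r=G_r\cap Z(\aut(\mathcal{A}))$ once the center is known to survive $2$-closure — at the cost of one more citation. Both arguments are sound; your bookkeeping of the action convention in the step $g^k=e^{g_rk}=e^{kg_r}=g$ is the only delicate point and you have it right.
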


\begin{proof}
Since both $G_r$ and $R$ are abelian, we have $G_r\cap R\leq Z(K)$. From~\cite[Proposition~2.1]{FK} it follows that $Z(K)\leq Z(K^{(2)})=Z(\aut(\mathcal{A}))$. Therefore $G_r\cap R\leq Z(\aut(\mathcal{A}))\cap G_r=V_r$, where the latter equality holds by Lemma~\ref{thincenter}. On the other hand, $Z(\aut(\mathcal{A}))\leq C_K(R)=R$, where the latter equality holds because $R$ is abelian and regular. Therefore $V_r=Z(\aut(\mathcal{A}))\cap G_r\leq G_r\cap R$ and hence $G_r\cap R=V_r$.
\end{proof}

\section{Wreath and tensor products}

Let $\mathcal{A}$ be an $S$-ring over $G$. Suppose that $G_1$ and $G_2$ are $\mathcal{A}$-subgroups such that $G=G_1\times G_2$ and $\mathcal{A}_i=\mathcal{A}_{G_i}$, $i\in \{1,2\}$. The $S$-ring $\mathcal{A}$ is defined to be a \emph{tensor product} of $\mathcal{A}_{1}$ and $\mathcal{A}_{2}$ if 
$$\mathcal{S}(\mathcal{A})=\mathcal{S}(\mathcal{A}_{1})\otimes \mathcal{S}(\mathcal{A}_{2})=\{X_1\times X_2:~X_1\in\mathcal{S}(\mathcal{A}_{1}),~X_2\in \mathcal{S}(\mathcal{A}_{2})\}.$$
In this case, we write $\mathcal{A}=\mathcal{A}_{1}\otimes \mathcal{A}_{2}$. One can verify that 

\begin{equation}\label{auttens}
\iso(\mathcal{A}_{1}\otimes \mathcal{A}_{2})=\iso(\mathcal{A}_{1})\times \iso(\mathcal{A}_{2})~\text{and}~\aut(\mathcal{A}_1\otimes \mathcal{A}_2)=\aut(\mathcal{A}_1)\times \aut(\mathcal{A}_2).
\end{equation}

If $X\subseteq G_1\times G_2$, then the projections of $X$ on $G_1$ and $G_2$ are denoted by $X_{G_1}$ and $X_{G_2}$, respectively.

\begin{lemm}\cite[Lemma 2.3]{EKP}\label{proj}
Let $\mathcal{A}$ be an $S$-ring over an abelian group $G=G_1\times G_2$. Suppose that $G_1$ and $G_2$ are $\mathcal{A}$-subgroups. Then 
\begin{enumerate}
\tm{1} $X_{G_i}\in \mathcal{S}(\mathcal{A})$ for all $X\in \mathcal{S}(\mathcal{A})$ and $i=1,2;$

\tm{2} $\mathcal{A} \geq \mathcal{A}_{G_1}\otimes \mathcal{A}_{G_2}$, and the equality is attained whenever $\mathcal{A}_{G_i}=\mathbb{Z}G_i$ for some $i\in \{1,2\}$.
\end{enumerate}
\end{lemm}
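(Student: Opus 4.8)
The plan is to prove (1) by factoring the projection $\pi_1\colon G\to G_1$ through the quotient map $\pi\colon G\to G/G_2$, whose composite with the natural isomorphism $\phi\colon G/G_2\to G_1$ (the inverse of the restriction $\pi|_{G_1}$) is exactly $\pi_1$. Since $G_2$ is an $\mathcal{A}$-subgroup, $G/G_2$ is an $\mathcal{A}$-section and $\mathcal{A}_{G/G_2}$ is an $S$-ring. First I would show that for every $X\in\mathcal{S}(\mathcal{A})$ the image $\pi(X)$ is a \emph{basic} set of $\mathcal{A}_{G/G_2}$; then, observing that $\phi$ carries each basic set $Z\subseteq G_1$ of $\mathcal{A}_{G_1}$ to $\pi(Z)$ and hence induces a bijection between $\mathcal{S}(\mathcal{A}_{G_1})$ and $\mathcal{S}(\mathcal{A}_{G/G_2})$, I would conclude that $X_{G_1}=\phi^{-1}(\pi(X))$ is a basic set of $\mathcal{A}_{G_1}$, and therefore of $\mathcal{A}$.

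The heart of the argument — and the step I expect to be the main obstacle — is showing that $\pi(X)$ is a single basic set rather than merely a union of basic sets. Here I would use Lemma~\ref{intersection0} with $H=G_2$: the number $|X\cap G_2x|$ is a constant $c$ on $X$, so $\underline{X}\,\underline{G_2}=c\,\underline{XG_2}$, whence $XG_2$ is an $\mathcal{A}$-set, being the support of an element of $\mathcal{A}$. If $\pi(X)\cap\pi(X')\neq\varnothing$ for basic sets $X,X'$, then some $x'\in X'$ lies in $XG_2$; since a basic set meeting an $\mathcal{A}$-set is contained in it, $X'\subseteq XG_2$, so $X'G_2=XG_2$ and $\pi(X')=\pi(X)$. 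Thus the images $\{\pi(X):X\in\mathcal{S}(\mathcal{A})\}$ are pairwise equal or disjoint and partition $G/G_2$; since $\mathcal{A}_{G/G_2}$ is spanned by the $\underline{\pi(X)}$, a routine comparison of this partition with the basic-set partition of $\mathcal{A}_{G/G_2}$ (each refines the other) identifies the two, which gives the claim.

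For (2) the inclusion $\mathcal{A}\geq\mathcal{A}_{G_1}\otimes\mathcal{A}_{G_2}$ is the easy direction: a basic set of the tensor product is $Z\times W$ with $Z\in\mathcal{S}(\mathcal{A}_{G_1})$ and $W\in\mathcal{S}(\mathcal{A}_{G_2})$, and since $Z\subseteq G_1$, $W\subseteq G_2$ have disjoint supports, $\underline{Z\times W}=\underline{Z}\,\underline{W}\in\mathcal{A}$ because $\mathcal{A}$ is closed under multiplication and contains $\underline{Z}$ and $\underline{W}$. For the equality when, say, $\mathcal{A}_{G_2}=\mathbb{Z}G_2$, I note this means $G_2\subseteq\O_\theta(\mathcal{A})$. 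Given $X\in\mathcal{S}(\mathcal{A})$, part (1) makes $X_{G_2}$ a basic set contained in $\O_\theta(\mathcal{A})$, forcing $X_{G_2}=\{w\}$ to be a singleton; then $Xw^{-1}$ is again a basic set, as a product with the singleton $\{w^{-1}\}$, and it is contained in $G_1$, so $Xw^{-1}=X_{G_1}$ and $X=X_{G_1}\times\{w\}$. Hence every basic set of $\mathcal{A}$ is a basic set of $\mathcal{A}_{G_1}\otimes\mathbb{Z}G_2$, which yields $\mathcal{A}\subseteq\mathcal{A}_{G_1}\otimes\mathbb{Z}G_2$ and, together with the reverse inclusion, the desired equality.
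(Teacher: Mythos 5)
The paper does not prove this lemma at all—it is imported as \cite[Lemma~2.3]{EKP}—and your argument is correct and is essentially the standard proof of that result: identify $G_1$ with $G/G_2$, use Lemma~\ref{intersection0} to show that $XG_2$ is an $\mathcal{A}$-set so that the images $\pi(X)$ are pairwise disjoint or equal and hence are exactly the basic sets of $\mathcal{A}_{G/G_2}$, and deduce the equality in~(2) from~(1) via the observation that $X_{G_2}$ is a singleton lying in $\O_\theta(\mathcal{A})$. The only blemishes are cosmetic: with your convention for $\phi$ one has $X_{G_1}=\phi(\pi(X))$ rather than $\phi^{-1}(\pi(X))$, and the identity $\underline{Z}\,\underline{W}=\underline{Z\times W}$ holds because of unique factorization in $G_1\times G_2$ rather than ``disjoint supports.''
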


The lemma below immediately follows from the second part of Eq.~\eqref{auttens}.

\begin{lemm}\label{tensnorm}
A tensor product of two $S$-rings is normal  if and only if each of the operands so is. 
\end{lemm}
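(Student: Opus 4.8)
The plan is to reduce the statement to the second displayed identity in Equation~\eqref{auttens}, namely $\aut(\mathcal{A}_1\otimes \mathcal{A}_2)=\aut(\mathcal{A}_1)\times \aut(\mathcal{A}_2)$, which is assumed as already verified. Recall that an $S$-ring $\mathcal{A}$ over a group $G$ is normal precisely when $G_r$ is normal in $\aut(\mathcal{A})$, equivalently $\aut(\mathcal{A})\leq \Hol(G)$. So for $G=G_1\times G_2$ with $\mathcal{A}=\mathcal{A}_1\otimes\mathcal{A}_2$, the goal is to show that $(G_1\times G_2)_r$ is normal in $\aut(\mathcal{A}_1)\times\aut(\mathcal{A}_2)$ if and only if each $(G_i)_r$ is normal in $\aut(\mathcal{A}_i)$.

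First I would fix the identification of the relevant regular subgroups under the direct-product decomposition. Since $G=G_1\times G_2$ acts on itself by right translation componentwise, the group $G_r$ of right translations of $G$ decomposes as $G_r=(G_1)_r\times (G_2)_r$, sitting inside $\sym(G_1)\times\sym(G_2)\leq\sym(G)$ as the internal direct product, where $(G_i)_r$ is the group of right translations of $G_i$ acting on the $i$-th coordinate and fixing the other. With this identification, $\aut(\mathcal{A})=\aut(\mathcal{A}_1)\times\aut(\mathcal{A}_2)$ is likewise a direct product of groups acting on the respective coordinates.

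The key step is then the elementary fact that for a direct product of permutation groups $H_1\times H_2$ acting coordinatewise and a subgroup of the form $N_1\times N_2$ with $N_i\leq H_i$, one has $N_1\times N_2\unlhd H_1\times H_2$ if and only if $N_i\unlhd H_i$ for each $i$. This is immediate: conjugation in $H_1\times H_2$ acts coordinatewise, so $(h_1,h_2)(N_1\times N_2)(h_1,h_2)^{-1}=(h_1 N_1 h_1^{-1})\times(h_2 N_2 h_2^{-1})$, and this equals $N_1\times N_2$ for all $(h_1,h_2)$ exactly when $h_i N_i h_i^{-1}=N_i$ for all $h_i$, $i=1,2$. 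Applying this with $H_i=\aut(\mathcal{A}_i)$ and $N_i=(G_i)_r$ yields that $G_r=(G_1)_r\times(G_2)_r$ is normal in $\aut(\mathcal{A}_1)\times\aut(\mathcal{A}_2)=\aut(\mathcal{A})$ if and only if each $(G_i)_r$ is normal in $\aut(\mathcal{A}_i)$, which is precisely the assertion that $\mathcal{A}$ is normal if and only if each $\mathcal{A}_i$ is normal.

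I do not anticipate a serious obstacle here; the content is entirely carried by Equation~\eqref{auttens}, and the remaining argument is the coordinatewise-conjugation observation. The only point requiring mild care is the bookkeeping of identifying $G_r$ with $(G_1)_r\times(G_2)_r$ compatibly with the identification of $\aut(\mathcal{A})$ as a direct product, so that ``normal in the product'' genuinely reduces to ``normal in each factor'' rather than to some diagonal condition; once the subgroup in question is checked to be a genuine direct product $N_1\times N_2$ (and not merely a subdirect product), the equivalence is forced.
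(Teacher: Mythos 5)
Your proposal is correct and follows exactly the paper's route: the paper derives the lemma immediately from the identity $\aut(\mathcal{A}_1\otimes \mathcal{A}_2)=\aut(\mathcal{A}_1)\times \aut(\mathcal{A}_2)$ in Eq.~\eqref{auttens}, and your coordinatewise-conjugation argument simply spells out the elementary step the paper leaves implicit.
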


Let $S=U/L$ be an $\mathcal{A}$-section of $G$. The $S$-ring~$\mathcal{A}$ is defined to be the \emph{$S$-wreath product} or \emph{generalized wreath product} of $\mathcal{A}_U$ and $\mathcal{A}_{G/L}$ if $L\trianglelefteq G$ and every basic set $X$ of $\mathcal{A}$ outside~$U$ is a union of some $L$-cosets or, equivalently, $L\leq \rad(X)$ for every $X\in \mathcal{S}(\mathcal{A})_{G\setminus U}$. In this case, we write $\mathcal{A}=\mathcal{A}_U \wr_S \mathcal{A}_{G/L}$. The $S$-wreath product is called \emph{nontrivial} if $L\neq \{e\}$ and $U\neq G$ and \emph{trivial} otherwise. If $L=U$, then the $S$-wreath product coincides with the \emph{wreath product} $\mathcal{A}_L\wr \mathcal{A}_{G/L}$ of $\mathcal{A}_L$ and $\mathcal{A}_{G/L}$. The construction of a generalized wreath product of $S$-rings was introduced in~\cite{EP}. We say that $\mathcal{A}$ is \emph{decomposable} if $\mathcal{A}$ is a nontrivial $S$-wreath product for some $\mathcal{A}$-section $S$ of $G$ and \emph{indecomposable} otherwise.

The proof of the lemma below is a slight modification of the proof of~\cite[Theorem~5.7]{EP2} which is concerned with cyclic groups.

\begin{lemm}\label{wrnorm}
Let $\mathcal{A}$ be a normal $S$-ring over an abelian group $G$. Suppose that $\mathcal{A}$ is a nontrivial $U/L$-wreath product for some $\mathcal{A}$-section $U/L$. Then $L$ and $G/U$ are elementary abelian $2$-groups. 
\end{lemm}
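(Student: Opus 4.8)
The plan is to exploit the $U/L$-wreath structure to manufacture a large supply of automorphisms of $\mathcal{A}$ and then to show that normality is so rigid that these automorphisms can exist only when both $L$ and $G/U$ are elementary abelian $2$-groups. Concretely, for an \emph{arbitrary} function $c\colon G/U\to L$ I would define a permutation $\sigma_c$ of $G$ by $\sigma_c(g)=g\,c(gU)$. Since $c(gU)\in L\le U$, the map $\sigma_c$ acts on each coset $gU$ as the right translation by $c(gU)$, so it is a bijection preserving the block system of $U$-cosets. First I would verify that $\sigma_c\in\aut(\mathcal{A})$ for every such $c$. Writing $\sigma_c(y)\sigma_c(x)^{-1}=(yx^{-1})\,d$ with $d:=c(yU)c(xU)^{-1}\in L$, there are two cases. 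If $yx^{-1}\in U$, then $yU=xU$, so $d=e$ and the difference is unchanged. If $yx^{-1}\notin U$, the basic set $X\ni yx^{-1}$ lies in $G\setminus U$, whence $L\le\rad(X)$ and $Xd=X$, so $(yx^{-1})d\in X$. In either case $yx^{-1}$ and $\sigma_c(y)\sigma_c(x)^{-1}$ lie in the same basic set, giving $\sigma_c\in\aut(\mathcal{A})$.

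Next I would bring in normality. If $c(U)=e$ then $\sigma_c(e)=e$, so $\sigma_c$ lies in the stabilizer $\aut(\mathcal{A})_e$. Because $\mathcal{A}$ is normal, $\aut(\mathcal{A})\le\Hol(G)=G_r\rtimes\aut(G)$, and the stabilizer of $e$ in $\Hol(G)$ is exactly $\aut(G)$; hence $\sigma_c\in\aut(G)$ whenever $c(U)=e$. Expanding the homomorphism identity $\sigma_c(xy)=\sigma_c(x)\sigma_c(y)$ yields $c(xyU)=c(xU)c(yU)$ for all $x,y\in G$, i.e. the induced map $\bar c\colon G/U\to L$ is a group homomorphism. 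The decisive leverage is that $\sigma_c$ is an automorphism for \emph{every} function $c$, so normality forces every function $c\colon G/U\to L$ with $c(U)=e$ to in fact be a homomorphism.

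Finally I would test this rigidity against suitably supported $c$. Since $U\ne G$, the group $G/U$ is nontrivial, so I may pick $a\ne e$ in $G/U$ and an arbitrary $\ell\in L$ and set $c(a)=\ell$ with $c=e$ elsewhere; as $a^2\ne a$, the homomorphism relation gives $\ell^2=c(a^2)=e$, so $L$ has exponent $2$. For $G/U$, since $L\ne\{e\}$ I fix some $\ell\ne e$ and assume toward a contradiction that some $a\in G/U$ has order $\ge 3$; taking again $c(a)=\ell$, $c=e$ elsewhere, I note $a^3\ne a$, so $c(a^3)=e$ by construction, whereas the homomorphism identity forces $c(a^3)=c(a)c(a^2)=\ell$, a contradiction. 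Hence every element of $G/U$ has order at most $2$, and $G/U$ is elementary abelian of exponent $2$ as well, which is the claim. I expect the main obstacle to be the verification in the first paragraph that $\sigma_c$ lies in $\aut(\mathcal{A})$ for \emph{all} functions $c$ rather than merely for homomorphisms: the wreath radical condition $L\le\rad(X)$ is precisely what makes the outside-$U$ case go through, and it is exactly this unrestricted freedom in $c$ that converts normality into the rigid statement ``every based function is a homomorphism'' from which both conclusions drop out.
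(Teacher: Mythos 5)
Your proof is correct and follows essentially the same route as the paper: both exploit the automorphisms $g\mapsto g\,c(gU)$ of a generalized wreath product and use normality (so that the ones fixing $e$ land in $\aut(G)$) to force multiplicativity, from which the exponent-$2$ conclusions for $L$ and $G/U$ drop out. The only differences are cosmetic --- the paper imports the existence of such automorphisms from~\cite[Theorem~3.4.21]{CP} and works with the single constant function $c\equiv h$ on $G/U\setminus\{U\}$ together with the squaring identity $(gh)^2=(g^2)^f$, whereas you verify the automorphism property directly for arbitrary $c$ and reach the same conclusions via delta-supported test functions.
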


\begin{proof}
Due to~\cite[Theorem~3.4.21]{CP}, for every $h\in L\setminus \{e\}$ there is $f\in \aut(\mathcal{A})$ such that 
$$g^f=
\begin{cases}
g,~g\in U,\\
gh,~g\in G\setminus U.
\end{cases}$$
Then
$$g^2h^2=(gh)^2=(g^f)^2=(g^2)^f=
\begin{cases}
g^2,~g^2\in U,\\
g^2h,~g^2\in G\setminus U
\end{cases}$$
for every $g\in G\setminus U$. Since $h\neq e$, the latter equality implies that $h^2=e$ for every $h\in L\setminus \{e\}$ and $g^2\in U$ for every $g\in G\setminus U$. The first part of this statement yields that $L$ is an elementary abelian $2$-group, whereas the second one yields that $G/U$ so is. 
\end{proof}

The corollary below immediately follows from Lemma~\ref{wrnorm}

\begin{corl}\label{wrnonnorm}
A normal $S$-ring over an abelian group of odd order is indecomposable. 
\end{corl}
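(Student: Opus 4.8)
The plan is to argue by contradiction, invoking Lemma~\ref{wrnorm} directly. Suppose $\mathcal{A}$ is a normal $S$-ring over an abelian group $G$ of odd order, and suppose toward a contradiction that $\mathcal{A}$ is \emph{decomposable}. By definition, this means $\mathcal{A}$ is a nontrivial $U/L$-wreath product for some $\mathcal{A}$-section $U/L$ of $G$. The word \emph{nontrivial} records precisely the information we need: $L\neq \{e\}$ and $U\neq G$.

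First I would apply Lemma~\ref{wrnorm}, whose hypotheses are exactly that $\mathcal{A}$ is a normal $S$-ring over an abelian group and that it is a nontrivial $U/L$-wreath product. The conclusion is that both $L$ and $G/U$ are elementary abelian $2$-groups. It then suffices to extract a parity contradiction from either factor. Taking $L$: since the wreath product is nontrivial we have $L\neq \{e\}$, so $L$ is a nonidentity elementary abelian $2$-group, and in particular $L$ contains an element of order~$2$. But $L\leq G$, and a subgroup of an odd-order group again has odd order by Lagrange's theorem, so $G$ cannot contain an element of order~$2$. This is the desired contradiction, and hence $\mathcal{A}$ must be indecomposable. (One could equally well use that $G/U$ is a nonidentity elementary abelian $2$-group while being a quotient of the odd-order group $G$.)

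There is no genuine obstacle here: the entire content of the statement is already packaged in Lemma~\ref{wrnorm}, and the corollary is obtained simply by observing that an odd-order abelian group admits neither a nontrivial elementary abelian $2$-subgroup nor such a quotient. The only point worth stating carefully is that nontriviality of the wreath decomposition forces $L\neq\{e\}$ (equivalently $U\neq G$), so that the elementary abelian $2$-group produced by the lemma is genuinely nontrivial and therefore incompatible with the odd order of $G$.
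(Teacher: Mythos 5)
Your proposal is correct and follows exactly the paper's intended route: the paper states that the corollary "immediately follows from Lemma~\ref{wrnorm}", and your argument simply spells out the obvious parity contradiction between a nontrivial elementary abelian $2$-group $L$ (or quotient $G/U$) and the odd order of $G$. Nothing is missing.
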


\begin{lemm}\label{notsizep}
Under the assumptions of Lemma~\ref{intersection}, suppose that $L$ is an $\mathcal{A}$-subgroup of prime order, $\mathcal{A}_L=\mathbb{Z}L$, and $Lx\in \mathcal{S}(\mathcal{A})$ for some $x\in G$. Then $\mathcal{A}$ is a nontrivial $U/L$-wreath product for some proper $\mathcal{A}$-subgroup~$U$ of $G$.
\end{lemm}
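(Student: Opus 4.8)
The plan is to define $U$ to be the $\mathcal{A}$-subgroup generated by all basic sets that fail to be $L$-invariant and to show that this $U$ is proper; the wreath structure is then automatic. Precisely, set $p=|L|$, let $\mathcal{B}=\{X\in\mathcal{S}(\mathcal{A}):L\not\leq\rad(X)\}$, and put $U=\langle\,\bigcup_{X\in\mathcal{B}}X\,\rangle$. Since $\mathcal{A}_L=\mZ L$, every nonidentity element of $L$ is a singleton basic set with trivial radical, so $L\setminus\{e\}\subseteq\bigcup_{X\in\mathcal{B}}X$ and hence $L\leq U$; thus $U/L$ is an $\mathcal{A}$-section and $U$ is an $\mathcal{A}$-subgroup. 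By the definition of $U$, every basic set not contained in $U$ must be $L$-invariant, i.e. $L\leq\rad(X)$ whenever $X\not\subseteq U$, which is exactly the condition for $\mathcal{A}=\mathcal{A}_U\wr_{U/L}\mathcal{A}_{G/L}$. As $L\neq\{e\}$, the entire content of the lemma reduces to proving $U\neq G$, which simultaneously makes the product nontrivial and $U$ proper.

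The tool I would use to bound $U$ comes from the two regular groups furnished by Lemma~\ref{intersection}. Let $\rho$ be the unique element of $R$ with $e^{\rho}=x$. Because $Lx$ is a basic set of size $p>1$ we have $x\notin V$, so Lemma~\ref{intersection} gives $r_x\notin R$ and hence $\rho\neq r_x$, i.e. $\rho\in R\setminus G_r$. Since $\rho\in R\leq\aut(\mathcal{A})$ preserves each digraph $\cay(G,X)$, it carries out-neighbourhoods to out-neighbourhoods, so $(Xg)^{\rho}=Xg^{\rho}$ for every $X\in\mathcal{S}(\mathcal{A})$ and $g\in G$; in particular $X^{\rho}=Xx$. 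Moreover $L_r\leq V_r\leq R$ and $R$ is abelian, so $\rho$ commutes with $L_r$ and is $L$-equivariant, $(gh)^{\rho}=g^{\rho}h$ for $h\in L$. Hence $\rho$ descends to $\overline{\rho}\in\aut(\overline{\mathcal{A}})$ on $\overline{G}=G/L$, where $\overline{\mathcal{A}}=\mathcal{A}_{G/L}$ and $\overline{x}=Lx$ is a nonidentity thin element of $\overline{\mathcal{A}}$ (the basic set $Lx$ collapses to the point $\overline{x}$, since $\underline{Lx}^{\pi}=p\,\overline{x}$). Finally, the permutation $\tau$ defined by $g^{\tau}=g^{\rho}x^{-1}$ lies in $\aut(\mathcal{A})_e\setminus\{\id\}$, fixes $V$ pointwise, is $L$-equivariant, and stabilises every basic set; it is the candidate ``shift'' witnessing the decomposition.

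The argument for $U\neq G$ splits according to the image of $R$ in $\overline{G}$. If $\overline{R}=\overline{G_r}$, equivalently $\overline{\mathcal{A}}=\mZ\overline{G}$, then every basic set of $\mathcal{A}$ is contained in a single $L$-coset; a short counting argument shows an $L$-free basic set inside a coset of size $p$ must be a singleton, so every non-thin basic set is a full $L$-coset and therefore $L$-invariant. In this case $\mathcal{B}$ consists only of singletons, whence $U=V$, and $V\neq G$ because $R\neq G_r$ forces $\aut(\mathcal{A})_e\neq\{\id\}$ and thus $\mathcal{A}\neq\mZ G$. This already delivers the nontrivial decomposition $\mathcal{A}=\mathcal{A}_V\wr_{V/L}\mathcal{A}_{G/L}$.

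The case $\overline{R}\neq\overline{G_r}$ is the heart of the matter and the step I expect to be the main obstacle. Here $\overline{\mathcal{A}}$ again meets the hypotheses of Lemma~\ref{intersection}, so $\overline{V}=\overline{G_r}\cap\overline{R}\neq\overline{G}$ and $\overline{x}\in\overline{V}\setminus\{\overline{e}\}$ is thin. Writing $\pi\colon G\to\overline{G}$ and using $L\leq U$, one has $U=\pi^{-1}(\overline{U})$ with $\overline{U}=\langle\,\pi(X):X\in\mathcal{B}\,\rangle$, so $U\neq G$ is equivalent to $\overline{U}\neq\overline{G}$; the sets $\pi(X)$ are precisely the basic sets of $\overline{\mathcal{A}}$ whose full $\pi$-preimage splits into $p$ basic sets of $\mathcal{A}$. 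The difficulty is that $\overline{\mathcal{A}}$ need not inherit a thin \emph{coset} basic set, so one cannot simply invoke the lemma being proved by induction on $|G|$. My plan to overcome this is to control $\overline{U}$ directly: I would track the induced automorphism $\overline{\tau}\in\aut(\overline{\mathcal{A}})_{\overline{e}}$, which fixes the unsplit thin elements (in particular $\overline{x}$) while moving points only within the split basic sets, and combine this with $\overline{V}=\overline{G_r}\cap\overline{R}$ to confine all split basic sets to a proper $\overline{\mathcal{A}}$-subgroup missing $\overline{x}$. Establishing this confinement—equivalently, that the non-$L$-invariant basic sets cannot generate $G$ once a full $L$-coset is itself a basic set—is the key remaining step.
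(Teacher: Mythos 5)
Your reduction is sound: taking $U=\langle\,Y\in\mathcal{S}(\mathcal{A}):L\nleq\rad(Y)\,\rangle$ does make $\mathcal{A}$ a $U/L$-wreath product, and your first case (where $R^{G/L}=(G/L)_r$, so that every basic set lies in an $L$-coset and every non-$L$-invariant basic set is a singleton) is handled correctly. But the proposal has a genuine gap, and you name it yourself: in the case $R^{G/L}\neq(G/L)_r$ you only sketch a plan to ``confine'' the non-$L$-invariant basic sets to a proper subgroup via the induced automorphism $\overline{\tau}$, and you never carry it out. As written, nothing prevents the sets $X$ with $L\nleq\rad(X)$ from generating all of $G$, so the lemma is not proved in its main case.

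The paper closes exactly this case by a different mechanism, and it is worth seeing why your ingredients almost suffice. Assume the non-$L$-invariant basic sets generate $G$. First, for such a set $Y$ one gets $|Y\cap Ly|=1$ (your counting argument). Next --- and this is the step you are missing --- one computes the kernel of $\aut(\mathcal{A})$ acting on $G/L$: the $S$-ring $V(\aut(\mathcal{A})_{G/L}G_r,G)$ refines $\mathcal{A}$, has all basic sets inside $L$-cosets, hence splits every non-$L$-invariant basic set into singletons; since these generate $G$, Lemma~\ref{groupring} forces this $S$-ring to be $\mathbb{Z}G$ and therefore $\aut(\mathcal{A})_{G/L}=L_r$. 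Now your observation that $\overline{x}=Lx$ is thin in $\mathcal{A}_{G/L}$ combines with Lemma~\ref{thincenter} to give $(Lx)_r\in Z(\aut(\mathcal{A}_{G/L}))\leq C_{\aut(\mathcal{A}_{G/L})}(R^{G/L})=R^{G/L}$, and pulling back through the kernel yields $x_r\in G_r\cap RL_r=G_r\cap R=V_r$ by Lemma~\ref{intersection}. Then $\{x\}$ is a basic set, contradicting $Lx\in\mathcal{S}(\mathcal{A})$ with $|L|$ prime. So the missing idea is not a confinement of the split basic sets but the identification of the kernel on $G/L$ with $L_r$, which lets the thinness of $\overline{x}$ be lifted to thinness of $x$; without that lift (or an equivalent substitute) your case two does not go through.
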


\begin{proof}
We are done if $\langle Y\in\mathcal{S}(\mathcal{A}):~L\nleq \rad(Y) \rangle<G$. Further, we assume that
\begin{equation}\label{gener}
\langle Y\in\mathcal{S}(\mathcal{A}):~L\nleq \rad(Y) \rangle=G.
\end{equation}

Let $Y\in\mathcal{S}(\mathcal{A})$ such that $L\nleq \rad(Y)$. The number $\lambda=|Y\cap Ly|$ does not depend on $y\in Y$ by Lemma~\ref{intersection0}. Assume that $\lambda\geq 2$. Then there are $a_1,a_2\in L$ such that $a_1\neq a_2$ and $a_1y,a_2y\in Y$. This implies that $Ya_2a_1^{-1}\cap Y\neq \varnothing$. Note that $\{a_2a_1^{-1}\}\in\mathcal{S}(\mathcal{A})$ because $\mathcal{A}_L=\mathbb{Z}L$. Therefore $Ya_2a_1^{-1}\in \mathcal{S}(\mathcal{A})$. Together with $Ya_2a_1^{-1}\cap Y\neq \varnothing$, this yields that $Ya_2a_1^{-1}=Y$. So $a_2a_1^{-1}\in \rad(Y)$. As $a_1\neq a_2$, we conclude that $L\leq \rad(Y)$, a contradiction to the assumption on $Y$. Thus, 
\begin{equation}\label{intery}
|Y\cap Ly|=1
\end{equation}
for every $Y\in\mathcal{S}(\mathcal{A})$ such that $L\nleq \rad(Y)$. 

The set $G/L$ is an imprimitivity system for $\aut(\mathcal{A})$ because $L$ is an $\mathcal{A}$-subgoup. Since the kernel $\aut(\mathcal{A})_{G/L}$ of the action of $\aut(\mathcal{A})$ on $G/L$ is normal in $\aut(\mathcal{A})$, one can form the group $\aut(\mathcal{A})_{G/L}G_{r}\leq \aut(\mathcal{A})$. Put $\mathcal{B}=V(\aut(\mathcal{A})_{G/L}G_{r},G)$. One can see that $\mathcal{B}\geq \mathcal{A}$ because $\mathcal{A}$ is schurian and $\aut(\mathcal{A})_{G/L}G_{r}\leq \aut(\mathcal{A})$. By the definition, each basic set of $\mathcal{B}$ is contained in an $L$-coset. Together with Lemma~\ref{groupring} and Eqs.~\eqref{gener} and~\eqref{intery}, this implies that $\mathcal{B}=\mathbb{Z}G$. Therefore $\aut(\mathcal{A})_{G/L}\leq G_r$. The latter inclusion yields that
\begin{equation}\label{kernel}
\aut(\mathcal{A})_{G/L}=L_r.
\end{equation}

By the assumption of the lemma, $Lx\in \mathcal{S}(\mathcal{A})$ for some $x\in G$. Then $(Lx)_r\in\O_{\theta}(\mathcal{A}_{G/L})$. Lemma~\ref{thincenter} implies that $(Lx)_r\in Z(\aut(\mathcal{A})_{G/L})$. Therefore $(Lx)_r\in C_{\aut(\mathcal{A}_{G/L})}(R^{G/L})=R^{G/L}$, where the latter equality holds because $R^{G/L}$ is transitive and abelian and hence regular. Thus, 
$$x_r\in G_r \cap (R \aut(\mathcal{A})_{G/L})=G_r\cap (RL_r)=G_r\cap R=V_r,$$
where the first equality follows from Eq.~\eqref{kernel}, whereas the second and the third ones from Lemma~\ref{intersection}. The condition $x_r\in V_r$ yields that $\{x\}\in \mathcal{S}(\mathcal{A})$ which contradicts to the assumptions of the lemma that $|L|$ is prime and $Lx\in \mathcal{S}(\mathcal{A})$.
\end{proof}

\section{$p$-$S$-rings}

Let $p$ be a prime. An $S$-ring $\mathcal{A}$ over a $p$-group $G$ is called a \emph{$p$-$S$-ring} if every basic set of $\mathcal{A}$ has a $p$-power size. Throughout this section, $p$ is a prime, $n\geq 1$, $G$ is an abelian $p$-group $G$ of order~$p^n$, $\mathcal{A}$ is an $S$-ring over $G$, and $V=\O_{\theta}(\mathcal{A})$. The lemma below collects several known properties of $p$-$S$-rings which we need to prove the main results.

\begin{lemm}\label{psring}
In the above notations, the following statements hold:

\begin{enumerate}

\tm{1} $|V|>1$;

\tm{2} there exists a chain of $\mathcal{A}$-subgroups $\{e\}=G_0<G_1<\cdots<G_r=G$ such that $|G_{i+1}:G_i|=p$ for all $i\in\{0,\ldots,r-1\}$;

\tm{3} if $|G:V|=p$, then $\mathcal{A}=\mathbb{Z}V\wr_{V/L}\mathbb{Z}(G/L)$ for some nontrivial $L\leq V$;

\tm{4} if there is $X\in \mathcal{S}(\mathcal{A})$ such that $|X|=p^{n-1}$, then $\mathcal{A}=\mathcal{A}_H\wr \mathbb{Z}(G/H)$, where $H$ is an $\mathcal{A}$-subgroup of order~$p^{n-1}$;

\tm{5} if $|G|=p$, then $\mathcal{A}=\mathbb{Z}G$; 

\tm{6} if $|G|=p^2$, then $\mathcal{A}=\mathbb{Z}G$ or $\mathcal{A}\cong \mathbb{Z}C_p\wr \mathbb{Z}C_p$. 

\end{enumerate}

\end{lemm}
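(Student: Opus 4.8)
The plan is to handle the six statements in the order (1), (5), (2), (4), and finally (3) with (6), leaning throughout on two elementary observations. The first is that, since every element of $V$ is a singleton basic set, \emph{every} subset of $V$ is an $\mathcal{A}$-set; in particular every subgroup of $V$ is an $\mathcal{A}$-subgroup, every left translate $vX$ ($v\in V$, $X\in\mathcal{S}(\mathcal{A})$) is again a basic set, and $\mathcal{A}_V=\mathbb{Z}V$. The second, used repeatedly, is that a quotient of a $p$-$S$-ring by an $\mathcal{A}$-subgroup is again a $p$-$S$-ring: if $H$ is an $\mathcal{A}$-subgroup and $\pi\colon G\to G/H$ is the canonical epimorphism, then for $X\in\mathcal{S}(\mathcal{A})$ Lemma~\ref{intersection0} gives $|X\cap Hx|=c$ independent of $x\in X$, whence $|X^\pi|=|X|/c$ is again a power of $p$. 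As the basic sets of $\mathcal{A}_{G/H}$ are exactly the images $X^\pi$, this ring is a $p$-$S$-ring. Applying this when $|G/H|=p$ forces $\mathcal{A}_{G/H}=\mathbb{Z}(G/H)$: otherwise a nonidentity basic set would have to partition the $p-1$ nonidentity points of $G/H$ into blocks of $p$-power size $\geq p$, which is impossible.

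Statements (1), (5) and (2) are then quick. For (1), if $|V|=1$ the only singleton basic set is $\{e\}$, so every other basic set has size divisible by $p$; summing the sizes gives $p^n\equiv 1\pmod p$, a contradiction since $n\geq 1$. Statement (5) is immediate: when $|G|=p$, statement (1) forces $V=G$, hence $\mathcal{A}=\mathbb{Z}G$ (or invoke Lemma~\ref{groupring}). For (2) I would induct on $n$: by (1) the nontrivial $p$-group $V$ contains a subgroup $G_1\leq V$ of order $p$, which is an $\mathcal{A}$-subgroup by the first observation; the quotient $\mathcal{A}_{G/G_1}$ is a $p$-$S$-ring over a group of order $p^{n-1}$, so by induction it carries a chain of $\mathcal{A}$-subgroups with successive indices $p$, and pulling this chain back through $\pi$ and prepending $\{e\}<G_1$ yields the required chain in $G$.

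For (4), let $X$ be a basic set with $|X|=p^{n-1}$ and take any $\mathcal{A}$-subgroup $H$ of index $p$ (it exists by (2)). Since $\mathcal{A}_{G/H}=\mathbb{Z}(G/H)$, each $H$-coset is an $\mathcal{A}$-set, so the basic-set partition refines the coset partition and every basic set lies inside a single $H$-coset. As $e\notin X$ and $|X|=|H|$, the set $X$ must be a full nonidentity coset $Hg$. Now I would use rational conjugacy: by Lemma~\ref{burn}, for every $m$ coprime to $p$ the set $X^{(m)}=Hg^m$ is again a basic set, and as $m$ runs over $1,\dots,p-1$ the cosets $Hg^m$ run over all $p-1$ nonidentity cosets of $H$ (because $gH$ generates $G/H\cong C_p$). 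Hence every basic set outside $H$ is a full $H$-coset, which is exactly the assertion $\mathcal{A}=\mathcal{A}_H\wr\mathbb{Z}(G/H)$.

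The substance of the lemma, and the step I expect to be the main obstacle, is (3); statement (6) then follows by taking $|G|=p^2$, noting $|V|\in\{p,p^2\}$ by (1), and applying (3) with $L=V$ (the only nontrivial subgroup of $V$) in the case $V\neq G$. For (3) assume $|G:V|=p$. As above $\mathcal{A}_{G/V}=\mathbb{Z}(G/V)$, so every basic set outside $V$ sits inside one nonidentity coset $Vg$. The key point is that $V$, being thin, permutes the basic sets inside $Vg$ by left translation (each $vX$ is a basic set contained in $Vg$), while $V$ acts regularly on $Vg$; hence the basic-set partition of $Vg$ is a $V$-invariant partition of a regular $V$-set, so its blocks are precisely the cosets of a single subgroup $L_g\leq V$, and indeed $\rad(X)=L_g$ for each basic set $X\subseteq Vg$. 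Rational conjugacy makes this subgroup uniform: from $X\subseteq Vg$ one gets $X^{(m)}\subseteq Vg^m$ with $\rad(X^{(m)})=L_g$, so $L_{g^m}=L_g$, and letting $m$ range over $1,\dots,p-1$ shows $L_g=L$ is independent of the coset. This $L$ is nontrivial, for otherwise every exterior basic set would be a singleton and hence lie in $V$. Finally $L\leq\rad(X)$ for every basic set $X\not\subseteq V$, and the image in $G/L$ of every basic set (a point of $V/L$ inside $V$, a single $L$-coset outside) is a singleton, so $\mathcal{A}_{G/L}=\mathbb{Z}(G/L)$ and $\mathcal{A}=\mathbb{Z}V\wr_{V/L}\mathbb{Z}(G/L)$. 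The delicate points to get right are the reduction of a $V$-invariant partition of a regular action to a coset partition, and the uniformity of $L_g$ across the $p-1$ cosets via Lemma~\ref{burn}.
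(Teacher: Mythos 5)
Your proof is correct, and it is genuinely more self-contained than what the paper offers: the paper does not prove parts (1), (2) and (4) at all but cites \cite[Theorem~3.3, Proposition~3.4(i)]{HM}, and for part (3) it cites \cite[Proposition~4.3(i)]{KR1} (proved there only for elementary abelian groups) with the bare remark that the argument remains valid for every abelian $p$-group; only the derivations of (5) from (1) and of (6) from (1) and (3) appear explicitly, and those coincide with yours. Your arguments for the cited parts are the standard ones and they check out: the count modulo $p$ for (1); the induction through $\mathcal{A}_{G/G_1}$ for (2), where you should say explicitly that preimages of $\mathcal{A}_{G/G_1}$-subgroups are $\mathcal{A}$-subgroups (immediate here because $G_1\leq V$ makes $XG_1=\bigcup_{v\in G_1}Xv$ a union of basic sets); the observation that a size-$p^{n-1}$ basic set must be a full coset of an index-$p$ $\mathcal{A}$-subgroup together with Lemma~\ref{burn} for (4); and, for (3), the identification of the $V$-invariant partition of each coset $Vg$ with the coset partition of a subgroup $L_g\leq V$ (the block through $g$ is closed under left multiplication by itself, hence a subgroup), made uniform over the $p-1$ nonidentity cosets by rational conjugation since $X^{(m)}=L_g^{(m)}x^m=L_gx^m$ forces $L_{g^m}=L_g$. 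The added value of your write-up is exactly the step the paper waves at: your proof of (3) uses only commutativity and the fact that $m\mapsto x^m$ is bijective on a $p$-group for $m$ coprime to $p$, so it substantiates the claim that the argument of \cite{KR1} survives outside the elementary abelian case.
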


\begin{proof}
Statements~$(1)$ and~$(2)$ of the lemma are taken from~\cite[Theorem~3.3]{HM}. Statement~$(3)$ was proved in case when $G$ is elementary abelian in~\cite[Proposition~4.3(i)]{KR1}, however the proof remains valid for every abelian $p$-group. Statement~$(4)$ can be found in~\cite[Proposition~3.4(i)]{HM}. Statement~$(5)$ follows Statement~$(1)$, whereas Statement~$(6)$ follows from Statements~$(1)$ and~$(3)$.  
\end{proof}

\begin{lemm}\label{setcoset}
If $\mathcal{A}\neq \mathbb{Z}G$, then there exists $X\in \mathcal{S}(\mathcal{A})_{G\setminus V}$ such that $X$ is a coset by a nontrivial subgroup of $V$. 
\end{lemm}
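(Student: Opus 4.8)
The plan is to reduce the statement to the existence of a single non-singleton basic set that lies inside one coset of $V$, and then to manufacture such a coset by passing to the quotient modulo $V$.

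First I would record the bookkeeping observation that makes the target condition tractable. For any $X\in\mathcal{S}(\mathcal{A})$ and $y\in X$, Lemma~\ref{thincoset} gives that $W:=Xy^{-1}\cap V$ is a subgroup of $V$ independent of $y$. If moreover $X$ is contained in a single $V$-coset, say $X\subseteq Vy$, then $Xy^{-1}\subseteq V$, so $W=Xy^{-1}$ and hence $X=Wy$ is precisely a coset of the subgroup $W\leq V$, with $|W|=|X|$. Thus the phrase ``$X$ is a coset by a nontrivial subgroup of $V$'' is equivalent to ``$X$ lies in one $V$-coset and $|X|>1$''. I would also note that $\mathcal{A}=\mathbb{Z}G$ holds exactly when every element is thin, i.e. when $V=G$; so the hypothesis $\mathcal{A}\neq\mathbb{Z}G$ yields a proper $\mathcal{A}$-subgroup $V<G$, and in particular $G/V$ is a nontrivial abelian $p$-group.

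The key step is to locate a $V$-coset distinct from $V$ that is a union of basic sets of $\mathcal{A}$. For this I would consider the quotient $S$-ring $\mathcal{A}_{G/V}$ over $G/V$ (legitimate since $V$ is an $\mathcal{A}$-subgroup and $G$ is abelian). Applying Lemma~\ref{psring}(1) to this $S$-ring over the nontrivial abelian $p$-group $G/V$, its thin radical $\O_{\theta}(\mathcal{A}_{G/V})$ is nontrivial, so there is $\bar x\neq \bar e$ with $\{\bar x\}\in\mathcal{S}(\mathcal{A}_{G/V})$. By the definition of the quotient $S$-ring, the full preimage $\pi^{-1}(\bar x)=Vx$ is then an $\mathcal{A}$-set, i.e. a union of basic sets of $\mathcal{A}$, and $x\notin V$ since $\bar x\neq\bar e$.

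Finally I would extract the desired basic set from $Vx$. If every basic set contained in $Vx$ were a singleton, then each element of $Vx$ would be thin, forcing $Vx\subseteq V$ and contradicting $x\notin V$; hence some $X\in\mathcal{S}(\mathcal{A})$ with $X\subseteq Vx$ has $|X|>1$. Because $X$ lies in the single $V$-coset $Vx$, the equivalence from the first step shows $X=Wy$ is a coset by the nontrivial subgroup $W=Xy^{-1}\leq V$, and since $x\notin V$ we have $X\cap V=\varnothing$, so $X\in\mathcal{S}(\mathcal{A})_{G\setminus V}$, as required. The only genuinely substantive input is Lemma~\ref{psring}(1), which supplies a nontrivial thin radical in the quotient; I expect the main point requiring care to be the passage to $\mathcal{A}_{G/V}$ and the identification of the preimage of a thin element as an $\mathcal{A}$-set, after which the conclusion follows from the coset computation built on Lemma~\ref{thincoset}.
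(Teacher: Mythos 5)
Your proof is correct, but it takes a genuinely different route from the paper's. The paper works from the bottom of the subgroup lattice: it takes a chain of $\mathcal{A}$-subgroups with consecutive indices~$p$ (Lemma~\ref{psring}(2)), locates the first term $G_{i_0}$ with $\mathcal{A}_{G_{i_0}}\neq\mathbb{Z}G_{i_0}$, notes that then $\O_\theta(\mathcal{A}_{G_{i_0}})=G_{i_0-1}$ has index~$p$ in $G_{i_0}$, and invokes Lemma~\ref{psring}(3) to write $\mathcal{A}_{G_{i_0}}$ as a wreath product whose exterior basic sets are cosets of a nontrivial $L\leq G_{i_0-1}\leq V$. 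You instead work from the top, quotienting by $V$: Lemma~\ref{psring}(1) applied to $\mathcal{A}_{G/V}$ yields a nontrivial thin element $\bar x$, the block property of the induced partition of $G/V$ makes $Vx$ an $\mathcal{A}$-set containing a non-singleton basic set, and Lemma~\ref{thincoset} upgrades that basic set to a full coset of a subgroup of~$V$. Your route trades the structural input Lemma~\ref{psring}(3) (the deepest of the quoted facts, imported from~\cite{KR1}) for the more elementary Lemma~\ref{psring}(1) together with Lemma~\ref{thincoset}, at the cost of some quotient-ring bookkeeping; it also pins down where the coset lives, namely inside a $V$-coset whose image is thin in $\mathcal{A}_{G/V}$. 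Two small points you should make explicit: that $\mathcal{A}_{G/V}$ is again a $p$-$S$-ring (the size of $X^\pi$ equals $|X|/|X\cap Vx|$, a $p$-power by Lemma~\ref{intersection0}), which is needed to apply Lemma~\ref{psring}(1); and that the images of the basic sets of $\mathcal{A}$ partition $G/V$, which is what makes $\pi^{-1}(\bar x)$ a union of basic sets of $\mathcal{A}$.
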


\begin{proof}
Due to Lemma~\ref{psring}(2), there exists a chain of $\mathcal{A}$-subgroups $\{e\}=G_0<G_1<\cdots<G_r=G$ such that $|G_{i+1}:G_i|=p$ for all $i\in\{0,\ldots,r-1\}$. Let $i_0\in \{2,\ldots,r-1\}$ be the minimal number such that $\mathcal{A}_{G_{i_0}}\neq \mathbb{Z}G_{i_0}$ (such $i_0$ exists because $\mathcal{A}\neq \mathbb{Z}G$). By the definition of $i_0$, we have $\mathcal{A}_{G_{i_0-1}}=\mathbb{Z}G_{i_0-1}$. Therefore $\mathcal{A}_{G_{i_0}}=\mathbb{Z}G_{i_0-1}\wr_{G_{i_0-1}/L}\mathbb{Z}(G_{i_0}/L)$ for some nontrivial $L\leq G_{i_0-1}\leq V$ by Lemma~\ref{psring}(3). This implies that every set from $\mathcal{S}(\mathcal{A})_{G_{i_0}\setminus G_{i_0-1}}$ is an $L$-coset and we are done.
\end{proof}

\begin{lemm}\label{interrad}
Let $X\in \mathcal{S}(\mathcal{A})$ such that $\langle X \rangle=G$. Then 
$$|\rad(X)\cap V|\geq p|V||X|/|G|.$$
Moreover, if the equality is attained and $p|V||X|/|G|>1$, then $\mathcal{A}$ is the nontrivial $H/L$-wreath product, where $L=\rad(X)\cap V$ and $H$ is an $\mathcal{A}$-subgroup of index~$p$.
\end{lemm}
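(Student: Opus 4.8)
The plan is to translate everything into the quotient $\overline{G}=G/V$ and to reduce the inequality to a comparison of basic-set sizes there. Write $\pi\colon G\to\overline{G}$ for the canonical epimorphism, $\overline{X}=\pi(X)$, and $L=\rad(X)\cap V$. First I would record two identities. By Lemma~\ref{thincoset}, $W:=Xx^{-1}\cap V$ is a subgroup of $\rad(X)$ independent of $x\in X$; since trivially $\rad(X)\cap V\subseteq Xx^{-1}\cap V$, in fact $W=L$. Multiplying by $x$ gives $X\cap Vx=Lx$ for every $x\in X$, so by Lemma~\ref{intersection0} applied to the $\mathcal A$-subgroup $V$ each $V$-coset met by $X$ is met in exactly $|L|$ points, while $X$ meets $|\overline{X}|$ such cosets. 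Hence $|L|=|X\cap Vx|$ and $|X|=|L|\,|\overline{X}|$.

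Substituting these into the claimed bound, and using $|G|=|V|\,|\overline{G}|$, one computes $p|V||X|/|G|=p|L|\,|\overline{X}|/|\overline{G}|$, so (dividing by $|L|\ge 1$) the inequality $|L|\ge p|V||X|/|G|$ is equivalent to $|\overline{X}|\le|\overline{G}|/p$. We may assume $X\not\subseteq V$, since otherwise $\langle X\rangle=G$ forces $V=G$ and $\mathcal A=\mathbb ZG$, a degenerate case. Then $X\cap V=\varnothing$: any $x\in X\cap V$ would be thin, forcing $X=\{x\}\subseteq V$. Consequently $\overline e\notin\overline{X}$, so $\overline{X}$ is a basic set of $\mathcal A_{\overline{G}}$ avoiding the identity, whence $\overline{X}\subsetneq\overline{G}$ and $|\overline{X}|<|\overline{G}|$. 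As $|\overline{X}|=|X|/|L|$ is a ratio of powers of $p$, it is itself a power of $p$, and $|\overline{X}|<|\overline{G}|$ then gives $|\overline{X}|\le|\overline{G}|/p$, proving the inequality.

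For the equality case, equality means exactly $|\overline{X}|=|\overline{G}|/p$, i.e. $\overline{X}$ is a basic set of $\mathcal A_{\overline{G}}$ of size $|\overline{G}|/p$. Applying Lemma~\ref{psring}$(4)$ to $\mathcal A_{\overline{G}}$ produces an $\mathcal A_{\overline{G}}$-subgroup $\overline{H}$ of index $p$ with $\mathcal A_{\overline{G}}=(\mathcal A_{\overline{G}})_{\overline{H}}\wr\mathbb Z(\overline{G}/\overline{H})$. I would then set $H=\pi^{-1}(\overline{H})$; since $V\le H$, this is an $\mathcal A$-subgroup of index $p$, and $L\le V\le H$, so the asserted $H/L$-wreath product is automatically nontrivial once its defining radical property is established.

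It remains to verify that every basic set $Y$ with $Y\not\subseteq H$ satisfies $L\le\rad(Y)$, and this is the step I expect to be the main obstacle. The wreath decomposition of $\mathcal A_{\overline{G}}$ gives at once that $\overline{Y}=\pi(Y)$ is a full $\overline{H}$-coset and that no basic set straddles two $H$-cosets, so $Y$ lies in a single $H$-coset and meets every $V$-coset inside it; by Lemma~\ref{thincoset} it meets each such $V$-coset in a coset of $\rad(Y)\cap V$, so the problem reduces to proving $L\le\rad(Y)\cap V$ uniformly. The mechanism I would use is to transport the $L$-coset structure of $X$ (which is a union of $L$-cosets because $L\le\rad(X)$) to an arbitrary $Y$ outside $H$, passing between the relevant $H$-cosets via the hypothesis $\langle X\rangle=G$ and the fact that products and left translates of basic sets by the thin elements of $L$ are again basic sets. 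The delicate point is to upgrade the resulting statement that $L$ merely \emph{permutes} the basic sets outside $H$ to the statement that it \emph{fixes} each of them; I expect this to require combining the regularity of the intersection numbers with $Vx$-cosets (Lemma~\ref{intersection0}) with the generation hypothesis, rather than following formally from the wreath structure on $\overline{G}$ alone.
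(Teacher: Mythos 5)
Your proof of the first assertion (the inequality) is correct and takes a genuinely different, arguably cleaner route than the paper's: the paper never passes to $G/V$, but instead forms the family $\mathcal{X}=\{X^{(m)}g:\ g\in V,\ 1\le m\le p-1\}$ of basic sets (via Lemma~\ref{burn}), shows they are pairwise disjoint and contained in $G\setminus H$ for an $\mathcal{A}$-subgroup $H$ of index~$p$, and compares the resulting lower bound $(p-1)|X||V|/|L|$ for $|\mathcal{X}^{\cup}|$ with $|G\setminus H|=(p-1)|G|/p$. Your identities $Xx^{-1}\cap V=L$ and $|X|=|L|\,|\pi(X)|$ are right, and the reduction of the inequality to $|\pi(X)|\le |G/V|/p$ is a legitimate shortcut for that half (both arguments tacitly use that $\mathcal{A}$ is a $p$-$S$-ring and that $X\not\subseteq V$, a degenerate case you correctly isolate).

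The ``moreover'' part, however, is genuinely incomplete, and the step you flag as ``the main obstacle'' is exactly the content of that part: you must prove $L\le\rad(Y)$ for \emph{every} basic set $Y$ outside $H$, and nothing in your quotient picture yields this. The wreath decomposition of $\mathcal{A}_{G/V}$ over $\overline{H}$ only controls $\pi(Y)$; it says nothing about $\rad(Y)\cap V$ for $Y\ne X$, and ``transporting the $L$-coset structure of $X$'' has no handle, since $L\le\rad(X)$ a priori constrains only the $V$-translates and rational conjugates of $X$ itself. This is precisely why the paper's counting is set up as it is: each member of $\mathcal{X}$ satisfies $\rad(X^{(m)}g)\cap V=L$ (because $x\mapsto x^m$ is an automorphism fixing $V$ and $L$ setwise), the members are pairwise distinct unless their $V$-translating elements lie in the same $L$-coset, and so $|\mathcal{X}^{\cup}|\ge (p-1)|X||V|/|L|$. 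When equality holds in the lemma this lower bound equals $|G\setminus H|$, so $\mathcal{X}$ \emph{exhausts} the basic sets outside $H$: every such $Y$ is literally some $X^{(m)}g$ and therefore has $L$ in its radical. Your reformulation divides out by $|L|$ at the outset and thereby discards the identification of the basic sets outside $H$, so the equality case cannot be recovered from it; to finish, you would have to reinstate essentially the paper's count of the sets $X^{(m)}g$ (choosing $H\supseteq V$ so that right translation by $g\in V$ keeps these sets outside $H$).
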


\begin{proof}
In view of Lemma~\ref{burn}, $X^{(m)}g\in \mathcal{S}(\mathcal{A})$ for all $g\in V$ and $m\in\{1,\ldots,p-1\}$. Let $\mathcal{X}$ be the set of all basic sets of the above form and $\mathcal{X}^{\cup}$ the union of them. Due to Lemma~\ref{psring}(2), there exists an $\mathcal{A}$-subgroup $H$ of index~$p$. Since $\langle X \rangle=G$, we have $\langle Y \rangle=G$ for every $Y\in \mathcal{X}$ and consequently
\begin{equation}\label{inclus}
\mathcal{X}^{\cup}\subseteq G\setminus H.
\end{equation}

Lemma~\ref{psring}(5) implies that $\mathcal{A}_{G/H}=\mathbb{Z}(G/H)$. So $X^{(m_1)}g_1$ and $X^{(m_2)}g_2$, where  $m_1\neq m_2\in \{1,\ldots,p-1\}$ and $g_1,g_2\in V$, are subsets of distinct $H$-cosets and hence $X^{m_1}g_1\cap X^{m_2}g_2=\varnothing$. Let $L=\rad(X)\cap V$. Clearly, 
\begin{equation}\label{radicals}
V\cap \rad(Y)=L 
\end{equation}
for every $Y\in \mathcal{X}$. Therefore $X^{(m_1)}g_1=X^{(m_1)}g_2$ if and only if $Lg_1=Lg_2$. The discussion of this paragraph yields that
\begin{equation}\label{chain0}
|\mathcal{X}^{\cup}|\geq (p-1)|X||V|/|L|.
\end{equation}

From Eqs.~\eqref{inclus} and~\eqref{chain0} it follows that
\begin{equation}\label{chain}
(p-1)|X||V|/|L|\leq |\mathcal{X}^{\cup}|\leq |G|-|H|=(p-1)|G|/p.
\end{equation}
Eq.~\eqref{chain} implies that
$$|L|\geq p|V||X|/|G|$$
as required. If the equality is attained in the latter inequality and $p|V||X|/|G|>1$, then $\mathcal{X}^{\cup}=G\setminus H$ by Eq.~\eqref{chain} and $|L|>1$. Therefore $\mathcal{A}$ is the nontrivial $H/L$-wreath product by Eq.~\eqref{radicals} and we are done.
\end{proof}

\begin{lemm}\label{sizep}
Let $\mathcal{A}=\cyc(M,G)$ for some $p$-group $M\leq \aut(G)$ and $X\in \mathcal{S}(\mathcal{A})$ such that $|X|=p$. Then there is $Y\in \mathcal{S}(\mathcal{A})$ such that $Y$ is a coset by a subgroup of $V$ of order~$p$.
\end{lemm}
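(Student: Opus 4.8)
The plan is to pass from the group $M$ to a single automorphism of order $p$ and then analyse the endomorphism $\delta=\sigma-1$ of a suitable $\mathcal{A}$-section, exploiting crucially that $G$ is abelian.

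Since $\mathcal{A}=\cyc(M,G)$, the basic sets of $\mathcal{A}$ are exactly the $M$-orbits on $G$, and $V=\O_\theta(\mathcal{A})=\Fix_G(M)$. First I would fix $x\in X$ and let $N=M_x$ be its stabilizer; as $|X|=p$ we have $|M:N|=p$, and since $M$ is a $p$-group, $N\trianglelefteq M$ with $M/N\cong C_p$. I then introduce the $\mathcal{A}$-subgroup $W=\Fix_G(N)$. Because $N$ is normal, $W$ is $M$-invariant and $X\subseteq W$ (all points of $X$ have stabilizer $N$); moreover the action of $M$ on $W$ has kernel containing $N$, so it factors through $M/N$ and is generated by a single automorphism $\sigma$ of $W$ with $\sigma^p=1$, of order exactly $p$ because $X\subseteq W$ is a $\sigma$-orbit of size $p$. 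Note that $V\subseteq W$ and $V=\Fix_W(\sigma)$. Since the $M$-orbits lying in $W$ coincide with the $\sigma$-orbits, it suffices to produce a $\sigma$-orbit in $W$ that is a coset of an order-$p$ subgroup of $V$.

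The key device is the map $\delta\colon W\to W$, $\delta(w)=w^{\sigma}w^{-1}$. Since $W$ is abelian, $\delta$ is a group endomorphism with $\ker\delta=\Fix_W(\sigma)=V$. The main technical step is to show that $\delta$ is nilpotent: writing $\sigma=1+\delta$ and using the congruence $(\sigma-1)^p\equiv\sigma^p-1\pmod p$ together with $\sigma^p=1$, one obtains $\delta^p=p\cdot\zeta$ for some $\zeta\in\mathbb{Z}[\delta]$ commuting with $\delta$, and iterating against the finite exponent of $W$ forces a power of $\delta$ to vanish. I expect this nilpotency argument, together with the bookkeeping needed to extract from it an element of order exactly $p$, to be the main obstacle.

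Granting nilpotency, let $m\ge 1$ be minimal with $\delta^{m}(x)=e$; since $x\notin V$ we have $m\ge 2$. Put $z=\delta^{m-1}(x)$, so $z\neq e$ and $z\in\ker\delta=V$, say of order $p^{j}$. Taking $y=\delta^{m-2}(x)$, so that $\delta(y)=z$, and passing to $u=y^{p^{j-1}}$, the fact that $\delta$ is an endomorphism gives $\delta(u)=z^{p^{j-1}}=:w$, an element of $V$ of order exactly $p$ with $\delta(w)=e$. A direct induction on $i$ then yields $u^{\sigma^{i}}=uw^{i}$, using $w^{\sigma}=w$ and $u^{\sigma}=uw$, so the $\sigma$-orbit of $u$ equals the coset $u\langle w\rangle$, which has size $p$. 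This orbit is a basic set $Y$ of $\mathcal{A}$ that is a coset of the order-$p$ subgroup $\langle w\rangle\le V$, as required.
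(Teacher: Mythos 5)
Your proof is correct, but it takes a genuinely different route from the paper's. The paper first reduces to the case $\langle X\rangle=G$ and then shows that the point stabilizer $M_x$ is trivial: the cyclotomic $S$-ring $\mathcal{B}=\cyc(M_x,G)$ is a $p$-$S$-ring in which every point of $X$ lies in a singleton basic set, so $\mathcal{B}=\mathbb{Z}G$ by Lemma~\ref{groupring}, whence $|M|=p$ and every basic set outside $V$ has size~$p$; the conclusion then follows from Lemma~\ref{setcoset}, which already supplies a basic set outside $V$ that is a coset by a nontrivial subgroup of $V$. You instead pass to $W=\Fix_G(M_x)$, on which $M$ acts through a single automorphism $\sigma$ of order $p$, and extract the coset directly from the nilpotency of $\delta=\sigma-1$; this bypasses Lemma~\ref{setcoset} and the structure theory of $p$-$S$-rings (Lemma~\ref{psring}(2),(3)) entirely and is essentially self-contained. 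The step you flag as the main obstacle does go through: in the commutative ring $\mathbb{Z}[\delta]\subseteq\mathrm{End}(W)$ one has $(1+\delta)^p=1$, hence $\delta^p=p\zeta$ with $\zeta\in\delta\mathbb{Z}[\delta]$, and so $\delta^{ep}=p^e\zeta^e=0$ where $p^e$ is the exponent of $W$; the remaining bookkeeping ($z=\delta^{m-1}(x)$, $u=\delta^{m-2}(x)^{p^{j-1}}$, $u^{\sigma^i}=uw^i$) is sound, and the resulting $\sigma$-orbit $u\langle w\rangle$ is indeed an $M$-orbit since $u\in W$. The paper's argument is shorter given the surrounding machinery and records in passing the useful fact that $|M|=p$ once $X$ generates $G$; yours is more elementary and produces the required coset explicitly as the orbit of a unipotent automorphism.
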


\begin{proof}
It suffices to prove that $\mathcal{A}_{\langle X \rangle}$ has a basic set which is a coset by a subgroup of $V$ of order~$p$. So we may assume further that $\langle X \rangle=G$. Let $x\in X$ and $\mathcal{B}=\cyc(M_x,G)$. Note that $M_x<M$ because $|X|=p$. Together with $\{x\}\in \mathcal{S}(\mathcal{B})$, this implies that $\mathcal{B}>\mathcal{A}$. Therefore $X$ is a $\mathcal{B}$-set. Clearly, $M_x$ is a $p$-group and hence $\mathcal{B}$ is a $p$-$S$-ring. In particular, every  set from $\mathcal{S}(\mathcal{B})_X$ has a $p$-power size. Together with $|X|=p$ and $\{x\}\in \mathcal{S}(\mathcal{B})$, this yields that each set from $\mathcal{S}(\mathcal{B})_X$ is a singleton. Now Lemma~\ref{groupring} and $\langle X \rangle=G$ imply that $\mathcal{B}=\mathbb{Z}G$. Therefore $|M_x|=1$ and hence $|M|=p$. Thus, every basic set from $\mathcal{S}(\mathcal{A})_{G\setminus V}$ is of size~$p$ and we are done by Lemma~\ref{setcoset}.
\end{proof}

\begin{lemm}\label{genrad}
Suppose that $\mathcal{A}=V(K,G)$, where $K=\langle G_r,R\rangle$ for some $G$-regular subgroup $R\neq G_r$ of $\sym(G)$, and $X\in \mathcal{S}(\mathcal{A})$ generates $G$. Then $|\rad(X)\cap V|>1$. 
\end{lemm}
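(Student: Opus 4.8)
The plan is to prove the statement directly, by exhibiting a generalized wreath decomposition of $\mathcal{A}$ that forces a nontrivial subgroup of $V$ into $\rad(X)$. (Equivalently, by Lemma~\ref{thincoset} it suffices to show that $X$ meets some coset of $V$ in more than one point, but I will not need this reformulation.) First I would record that the hypothesis $R\neq G_r$ forces $\mathcal{A}\neq \mathbb{Z}G$: if $\mathcal{A}=\mathbb{Z}G$, then every basic set is a singleton, so the point stabilizer $K_e$ is trivial and $K=G_r$; but then the regular subgroup $R\le K=G_r$ would coincide with $G_r$, a contradiction. In particular $V\neq G$, so Lemma~\ref{setcoset} is available.

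Next, using Lemma~\ref{setcoset} I would pick a basic set $X_0\in\mathcal{S}(\mathcal{A})_{G\setminus V}$ that is a coset $X_0=Lx$ of a nontrivial subgroup $L\le V$. Since $L\le V=\O_{\theta}(\mathcal{A})$, every element of $L$ is thin, so $L$ is an $\mathcal{A}$-subgroup with $\mathcal{A}_L=\mathbb{Z}L$. We are precisely in the setting of Lemma~\ref{intersection}, so Lemma~\ref{notsizep} applies and shows that $\mathcal{A}$ is a nontrivial $U/L$-wreath product for some proper $\mathcal{A}$-subgroup $U$ of $G$; in particular $\{e\}\neq L\le U<G$. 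Now I would invoke the generation hypothesis: as $\langle X\rangle=G$ and $U$ is proper, the basic set $X$ is not contained in $U$, hence $X\in\mathcal{S}(\mathcal{A})_{G\setminus U}$. By the definition of a $U/L$-wreath product this gives $L\le\rad(X)$, and therefore $\{e\}\neq L\le \rad(X)\cap V$, which is exactly the required conclusion.

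The step that needs care, and where the hypothesis $R\neq G_r$ genuinely enters, is the application of Lemma~\ref{notsizep}. Lemma~\ref{setcoset} only produces a nontrivial $L$, not one of prime order — for instance, in $\mathbb{Z}C_{p^2}\wr\mathbb{Z}C_p$ over $C_{p^3}$ every non-singleton basic set is a coset of the subgroup of order $p^2$ — so I would apply Lemma~\ref{notsizep} for an arbitrary nontrivial $L$; inspecting its proof, the primality of $L$ is used only via $|L|>1$ at the final step, where $\{x\}\in\mathcal{S}(\mathcal{A})$ together with $Lx\in\mathcal{S}(\mathcal{A})$ and $x\in Lx$ force $\{x\}=Lx$. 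The existence of the second regular subgroup $R$ is what makes the wreath decomposition available at all: it is built into the hypotheses of Lemma~\ref{notsizep} and is used inside its proof through the regularity of $R^{G/L}$ and the equality $G_r\cap R=V_r$ (Lemma~\ref{intersection}). This explains why no part of the argument can dispense with $R$: a cyclotomic $p$-$S$-ring such as $\cyc(\langle u\rangle,C_p^3)$ with $u$ a regular unipotent element admits a generating basic set with trivial radical, and such an $S$-ring simply has no second regular subgroup $R\neq G_r$ with $\mathcal{A}=V(\langle G_r,R\rangle,G)$.
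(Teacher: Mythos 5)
Your overall strategy --- produce a coset basic set via Lemma~\ref{setcoset}, feed it into Lemma~\ref{notsizep} to obtain a nontrivial $U/L$-wreath decomposition, and then use $\langle X\rangle=G$ to force $L\leq\rad(X)$ --- is exactly the skeleton of the paper's argument, and your preliminary observations ($\mathcal{A}\neq\mathbb{Z}G$, $\mathcal{A}_L=\mathbb{Z}L$ for $L\leq V$) are fine. The gap is in the pivotal step: your claim that the primality of $|L|$ enters the proof of Lemma~\ref{notsizep} only through $|L|>1$ at the very end is not correct. Primality is used essentially in the middle of that proof, when establishing $|Y\cap Ly|=1$ for every basic set $Y$ with $L\nleq\rad(Y)$: from $\lambda\geq 2$ one only obtains a nontrivial element $a_2a_1^{-1}\in L\cap\rad(Y)$, and the inference ``hence $L\leq\rad(Y)$, contradiction'' requires that this single nontrivial element generate $L$. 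For $|L|$ composite, the set $Yy^{-1}\cap L$ is merely a nontrivial proper subgroup of $L$ contained in $\rad(Y)$, which is perfectly consistent with $L\nleq\rad(Y)$; then Eq.~\eqref{intery} fails, the basic sets of $\mathcal{B}=V(\aut(\mathcal{A})_{G/L}G_{r},G)$ lying inside such a $Y$ need not be singletons, one cannot conclude $\mathcal{B}=\mathbb{Z}G$ nor $\aut(\mathcal{A})_{G/L}=L_r$, and the remainder of that proof collapses. Your own example $\mathbb{Z}C_{p^2}\wr\mathbb{Z}C_p$ shows the composite case genuinely occurs, so this is not a removable technicality; nor can you simply shrink $L$ to a prime-order subgroup $L_1<L$, since then $L_1x$ is no longer a basic set and the hypothesis of Lemma~\ref{notsizep} fails.

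The paper circumvents this exactly where you stumble: writing the coset basic set as $Y=Hx$ with $H\leq V$ nontrivial, it takes $L$ to be the subgroup of $H$ of index~$p$ and passes to the quotient $G/L$, where $\overbar{H}=H/L$ has order exactly $p$ and $\overbar{Y}$ is an $\overbar{H}$-coset basic set of $\mathcal{A}_{G/L}=V(\langle(G/L)_r,R^{G/L}\rangle,G/L)$. After disposing separately of the degenerate case $R^{G/L}=(G/L)_r$ (there every basic set lies in an $L$-coset and Lemmas~\ref{thincoset} and~\ref{groupring} finish the argument), Lemma~\ref{notsizep} applies legitimately in the quotient and gives $\overbar{H}\leq\rad(\overbar{X})$ because $\overbar{X}$ still generates $G/L$; pulling back yields $|X\cap Vx|>1$, and Lemma~\ref{thincoset} --- the reformulation you set aside --- then delivers $|\rad(X)\cap V|>1$. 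So your plan is repairable, but only by inserting this quotient step; as written, the application of Lemma~\ref{notsizep} to a composite-order $L$ is unjustified.
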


\begin{proof}
From Lemma~\ref{setcoset} it follows that there exists $Y\in \mathcal{S}(\mathcal{A})$ which is an $H$-coset for some nontrivial subgroup $H$ of $V$. Let $L$ be the subgroup (possibly, trivial) of $H$ of index~$p$. Clearly, $H/L\leq V/L$, $|H/L|=p$, and $Y/L\in \mathcal{S}(\mathcal{A}_{G/L})$ is an $H/L$-coset.

It is easy to see that $\mathcal{A}_{G/L}=V(K^{G/L},G)$ and $K^{G/L}=\langle G_r^{G/L},R^{G/L}\rangle=\langle (G/L)_r,R^{G/L}\rangle$. The group $R^{G/L}$ induced by $R$ on $G/L$ is transitive and abelian and hence regular. If $R^{G/L}=(G/L)_r$, then $|L|>1$ and $\mathcal{A}_{G/L}=\mathbb{Z}(G/L)$. So every basic set of $\mathcal{A}$ is contained in an $L$-coset. If $|X\cap Lx|>1$ for some $x\in X$, then $|\rad(X)\cap V|>1$ by Lemma~\ref{thincoset} and we are done. Otherwise, $X$ is a singleton and Lemma~\ref{groupring} yields that $\mathcal{A}=\mathbb{Z}G$ which contradicts to $R\neq G_r$. Thus, we may assume that 
$$R^{G/L}\neq (G/L)_r.$$

The above paragraphs imply that $\mathcal{A}_{G/L}$ and $\overbar{H}=H/L$ satisfy all the conditions of Lemma~\ref{notsizep}. So $\mathcal{A}_{G/L}$ is a $\overbar{U}/\overbar{H}$-wreath product for some proper $\mathcal{A}_{G/L}$-subgroup $\overbar{U}$ of $G/L$. Since $\langle \overbar{X}\rangle=G/L$, we conclude that $\overbar{X}\nsubseteq \overbar{U}$ and consequently $\overbar{H}\leq \rad(\overbar{X})$, where $\overbar{X}$ is the image of $X$ under the canonical epimorphism from $G$ to $G/L$. As $L<H\leq V$, we obtain $|X\cap Vx|>1$ for some $x\in X$. Thus, $|\rad(X)\cap V|>1$ by Lemma~\ref{thincoset} as required.  
\end{proof}

\begin{lemm}\label{faith}
Suppose that $\mathcal{A}$ is normal, $\mathcal{A}=\cyc(M,G)$ for some abelian $p$-subgroup $M$ of $\aut(G)$, and $|G:\langle X \rangle|\leq p$ for some $X\in \mathcal{S}(\mathcal{A})$. Then $X$ is a faithful regular orbit of $M$. 
\end{lemm}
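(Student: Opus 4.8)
The plan is to prove the single statement $M_x = 1$ for $x \in X$, which yields both conclusions at once: regularity (the orbit then has size $|M|$ and $M$ acts freely on it) and faithfulness (since $M$ is abelian, all point stabilizers $M_x$ with $x\in X$ coincide, and their common value is exactly the kernel of the action of $M$ on $X$). Recall that, because $\mathcal{A}=\cyc(M,G)$, the basic set $X$ is an $M$-orbit and $V=\O_\theta(\mathcal{A})=\Fix(M)$. As $M$ acts by automorphisms, any element fixing $X$ pointwise fixes $\langle X\rangle$ pointwise, so the kernel of $M$ on $X$ equals the kernel of $M$ on $W:=\langle X\rangle$. Thus everything reduces to showing that $M$ acts faithfully on $W$.

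When $W=G$ this is immediate and uses neither normality nor the parity of $p$: the kernel of $M$ on $X$ equals the kernel of $M$ on $G$, which is trivial because $M\le\aut(G)$ acts faithfully on $G$. Hence $M_x=1$ and $X$ is a faithful regular orbit.

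The substance is the case $|G:W|=p$. Write $H=W$ and suppose, toward a contradiction, that the kernel $K$ of $M$ on $H$ is nontrivial. Since $M$ is a $p$-group and $\aut(G/H)\cong\aut(C_p)$ has order prime to $p$, the group $M$ acts trivially on $G/H$; fixing $H$ pointwise, each $k\in K$ therefore acts on a nontrivial coset $Hg_0$ by translation by $c_k:=g_0^{-1}g_0^{\,k}\in H$. The map $k\mapsto c_k$ is an injective homomorphism $K\to H$ (injectivity because $c_k=e$ forces $k$ to fix $\langle H,g_0\rangle=G$), so $T:=\{c_k:k\in K\}$ is a nontrivial subgroup of $H$. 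I would then verify, using $K\unlhd M$ and $g_0^m\in Hg_0$, that $T$ is $M$-invariant, hence a union of $M$-orbits and so an $\mathcal{A}$-subgroup. Finally, on each coset $Hg_0^{\,i}$ with $1\le i\le p-1$ the element $k$ translates by $c_k^{\,i}$, and since the map $t\mapsto t^{\,i}$ permutes the $p$-group $T$, the $K$-orbits in $Hg_0^{\,i}$ are precisely the $T$-cosets. As $K\le M$, this shows $T\le\rad(Y)$ for every $Y\in\mathcal{S}(\mathcal{A})_{G\setminus H}$. These properties say exactly that $\mathcal{A}$ is a nontrivial $H/T$-wreath product. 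But $\mathcal{A}$ is normal over $G$ of odd order $p^n$, so this contradicts Corollary~\ref{wrnonnorm} (equivalently, Lemma~\ref{wrnorm} is violated, since $G/H\cong C_p$ is not an elementary abelian $2$-group). Hence $K=1$, $M$ acts faithfully on $H\supseteq X$, and $M_x=1$ as required.

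The main obstacle is this index-$p$ case: from a nontrivial kernel $K$ one must manufacture the translation subgroup $T$, confirm its $M$-invariance so that it is an honest $\mathcal{A}$-subgroup, and check that every basic set outside $H$ is $T$-periodic, thereby forcing a genuine generalized wreath decomposition of the normal $S$-ring that is impossible over a group of odd order. The generating case $\langle X\rangle=G$ is routine, and it is precisely the reliance on indecomposability over odd-order groups (Corollary~\ref{wrnonnorm}) that makes the argument go through.
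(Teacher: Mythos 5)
Your proof is correct, and its skeleton matches the paper's: reduce everything to showing the kernel of $M$ on $\langle X\rangle$ is trivial, dispose of the case $\langle X\rangle=G$ immediately, and in the index-$p$ case turn a nontrivial kernel into a nontrivial generalized wreath decomposition of $\mathcal{A}$, contradicting Corollary~\ref{wrnonnorm}. Where you differ is in how that decomposition is manufactured. The paper passes to the auxiliary cyclotomic $S$-ring $\mathcal{B}=\cyc(M_X,G)$, observes $\langle X\rangle\leq \O_\theta(\mathcal{B})$, invokes Lemma~\ref{psring}(3) to write $\mathcal{B}$ as a nontrivial $\langle X\rangle/L$-wreath product, and then must transfer back to $\mathcal{A}$ by replacing $L$ with the least $\mathcal{A}$-subgroup $L_1$ containing it. You instead build the radical subgroup by hand: the translation cocycle $k\mapsto c_k=g_0^{-1}g_0^{\,k}$ embeds the kernel $K$ into $H=\langle X\rangle$ as a subgroup $T$ which (since $M$ is abelian) is even pointwise fixed by $M$, hence lies in $\O_\theta(\mathcal{A})$ and is an honest $\mathcal{A}$-subgroup with no transfer step needed; the computation on the cosets $Hg_0^{\,i}$ then gives $T\leq\rad(Y)$ for every basic set $Y$ outside $H$ directly. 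Your route is more self-contained (it bypasses Lemma~\ref{psring}(3), essentially reproving the instance of it that is needed), at the cost of a few lines of explicit verification; it also makes transparent that both proofs, like the statement's intended use, silently require $p$ odd, since the final contradiction rests on indecomposability of normal $S$-rings over groups of odd order.
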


\begin{proof}
The group $M^X$ induced by $M$ on $X$ is transitive and abelian. So $M^X$ is regular and hence $X$ is a regular orbit of $M$. Let us prove that $X$ is also faithful. Let $U=\langle X \rangle$ and $M_X$ the kernel of the action of $M$ on $X$. If $U=G$, then $M_X$ is trivial and hence $X$ is a faithful orbit of $M$.  Suppose that $|G:U|=p$. Let $\mathcal{B}=\cyc(M_X,G)$. Observe that $\mathcal{B}\geq \mathcal{A}$ because $M_X\leq M$. Since $M$ is a $p$-group, $M_X$ so is. This implies that $\mathcal{B}$ is a $p$-$S$-ring. Clearly, $M_X$ acts trivially on $X$ and hence $U \leq \O_{\theta}(\mathcal{B})$. If $\O_{\theta}(\mathcal{B})=G$, then $M_X$ is trivial and consequently $X$ is a faithful orbit of $M$. Otherwise, $\O_{\theta}(\mathcal{B})=U$. Therefore 
$$\mathcal{B}=\mathbb{Z}U \wr_{U/L}\mathbb{Z}(G/L)$$ 
for some nontrivial $L\leq U$ by Lemma~\ref{psring}(3). Since $U$ is an $\mathcal{A}$-subgroup as well as $\mathcal{B}$-subgroup, we conclude that $\mathcal{A}$ is the nontrivial $U/L_1$-wreath product, where $L_1$ is the least $\mathcal{A}$-subgroup containing $L$, a contradiction to Corollary~\ref{wrnonnorm}. 
\end{proof}

\section{$\CI$-$S$-rings}

Let $\mathcal{A}$ be an $S$-ring over a group $G$. It is easy to check that $\aut(\mathcal{A})\aut(G)\subseteq \iso(\mathcal{A})$. However, the converse inclusion does not hold in general. The $S$-ring $\mathcal{A}$ is said to be \emph{CI} if 
$$\iso(\mathcal{A})=\aut(\mathcal{A})\aut(G).$$ 
The notion of a $\CI$-$S$-ring was introduced in~\cite{HM}.

Let $K\in \Sup_2(G_r)$ and $\mathcal{A}=V(K,G)$. Clearly, $K=\aut(\mathcal{A})$. Due to~\cite[Theorem~2.6]{HM}, the $S$-ring $\mathcal{A}$ is $\CI$ if and only if $K$ is $G$-transjugate. So $G$ is a $\CI^{(2)}$-group if and only if every schurian $S$-ring over $G$ is $\CI$. Observe that $K\in \Sup_2(G_r)\cap \mathcal{L}(\Hol(G))$ if and only if $\mathcal{A}$ is normal. Therefore we obtain the next statement.

\begin{lemm}\label{ncicrit0}
A group $G$ is an $\NCI^{(2)}$-group if and only if every normal schurian $S$-ring over $G$ is $\CI$.
\end{lemm}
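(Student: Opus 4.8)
The plan is to exploit the one-to-one correspondence, recorded just above the statement, between schurian $S$-rings over $G$ and the groups in $\Sup_2(G_r)$, and to translate the defining condition of an $\NCI^{(2)}$-group directly into the language of $S$-rings. By definition, $G$ is an $\NCI^{(2)}$-group precisely when every $K\in \Sup_2(G_r)\cap \mathcal{L}(\Hol(G))$ is $G$-transjugate, so it suffices to show that this family of groups is in bijection with the normal schurian $S$-rings over $G$ in a way that matches $G$-transjugacy of $K$ with the $\CI$-property of the corresponding $S$-ring.

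First I would fix $K\in \Sup_2(G_r)$ and set $\mathcal{A}=V(K,G)$, recalling that $K=\aut(\mathcal{A})$ and that $\mathcal{A}$ is schurian. The mutually inverse assignments $K\mapsto V(K,G)$ and $\mathcal{A}\mapsto \aut(\mathcal{A})$ give the required one-to-one correspondence. Under it, the condition $K\in \mathcal{L}(\Hol(G))$, that is $K\leq \Hol(G)$, is equivalent to $\aut(\mathcal{A})\leq \Hol(G)$, which is exactly the normality of $\mathcal{A}$; hence $K$ ranges over $\Sup_2(G_r)\cap \mathcal{L}(\Hol(G))$ if and only if $\mathcal{A}$ ranges over the normal schurian $S$-rings over $G$.

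Next I would invoke~\cite[Theorem~2.6]{HM}, which asserts that $\mathcal{A}$ is $\CI$ if and only if $K=\aut(\mathcal{A})$ is $G$-transjugate. Combining this equivalence with the correspondence of the previous step yields the chain of equivalences: $G$ is $\NCI^{(2)}$ $\iff$ every $K\in \Sup_2(G_r)\cap \mathcal{L}(\Hol(G))$ is $G$-transjugate $\iff$ every normal schurian $S$-ring over $G$ is $\CI$, which is the claim.

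Since all the ingredients---the bijection, the characterization of normality via $\aut(\mathcal{A})\leq \Hol(G)$, and the transjugacy criterion---are already in place, there is no substantial obstacle here; the proof is a matter of bookkeeping. The only point requiring care is to verify that the bijection restricts correctly to the normal case, i.e. that $K\in \mathcal{L}(\Hol(G))$ translates to normality of $\mathcal{A}$, which is immediate from $K=\aut(\mathcal{A})$ together with the definition of a normal $S$-ring.
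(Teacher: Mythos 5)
Your argument is correct and matches the paper's own reasoning: the paper derives this lemma from exactly the same ingredients, namely the bijection $K\mapsto V(K,G)$, $\mathcal{A}\mapsto\aut(\mathcal{A})$ between groups in $\Sup_2(G_r)$ and schurian $S$-rings, the equivalence from~\cite[Theorem~2.6]{HM} between $G$-transjugacy of $K$ and the $\CI$-property of $V(K,G)$, and the observation that $K\in\mathcal{L}(\Hol(G))$ corresponds to normality of the $S$-ring. No differences worth noting.
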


Let $K_1,K_2\in \Sup(G_r)$ such that $K_1 \leq K_2$. The group $K_1$ is called a \emph{$G$-complete subgroup} of $K_2$ if every $G$-regular subgroup of $K_2$ is conjugate in $K_2$ to some $G$-regular subgroup of $K_1$ (see \cite[Definition~2]{HM}). In this case, we write $K_1 \preceq_G K_2$. The relation $\preceq_G$ is a partial order on $\Sup(G_r)$. The set of the minimal elements of $\Sup_2(G_r)$ with respect to $\preceq_G$ is denoted by $\Sup_2^{\min}(G_r)$. One can see that if $K_1 \preceq_G K_2$ and $K_1$ is $G$-transjugate, then $K_2$ so is. Therefore $G$ is a $\CI^{(2)}$-group if and only if every schurian $S$-ring $\mathcal{A}$ such that $\aut(\mathcal{A})\in \Sup_2^{\min}(G_r)$ is $\CI$. Clearly, if $K_1 \preceq_G K_2$ and $K_2\leq \Hol(G)$, then $K_1\leq \Hol(G)$. Thus, we obtain the following criterion of an $\NCI^{(2)}$-property for groups which is a refinement of Lemma~\ref{ncicrit0}.

\begin{lemm}\label{ncicrit}
A group $G$ is an $\NCI^{(2)}$-group if and only if every normal schurian $S$-ring $\mathcal{A}$ such that $\aut(\mathcal{A})\in \Sup_2^{\min}(G_r)$ is a $\CI$-$S$-ring.
\end{lemm}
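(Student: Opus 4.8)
The plan is to deduce this refinement of Lemma~\ref{ncicrit0} by propagating $G$-transjugacy along the partial order $\preceq_G$, from minimal elements up to arbitrary elements of $\Sup_2(G_r)\cap\mathcal{L}(\Hol(G))$. The forward implication is immediate: if $G$ is an $\NCI^{(2)}$-group, then by Lemma~\ref{ncicrit0} every normal schurian $S$-ring over $G$ is $\CI$, and in particular this holds for those $\mathcal{A}$ with $\aut(\mathcal{A})\in\Sup_2^{\min}(G_r)$.

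For the converse I would assume that every normal schurian $S$-ring $\mathcal{A}$ with $\aut(\mathcal{A})\in\Sup_2^{\min}(G_r)$ is $\CI$, and aim to show that every normal schurian $S$-ring is $\CI$, which gives $\NCI^{(2)}$ by Lemma~\ref{ncicrit0}. So let $\mathcal{B}$ be an arbitrary normal schurian $S$-ring over $G$ and put $K=\aut(\mathcal{B})$. Then $K\in\Sup_2(G_r)\cap\mathcal{L}(\Hol(G))$, so $K\leq\Hol(G)$. Since $\preceq_G$ is a partial order on the finite set $\Sup_2(G_r)$, there is a minimal element $K_0\in\Sup_2^{\min}(G_r)$ with $K_0\preceq_G K$.

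The crucial point is then to verify that $K_0$ itself yields a normal schurian $S$-ring to which the hypothesis applies. From $K_0\preceq_G K$ and $K\leq\Hol(G)$ we obtain $K_0\leq\Hol(G)$ by the downward-closure property recorded just before the statement; hence $\mathcal{A}_0=V(K_0,G)$ is a normal schurian $S$-ring, and since $K_0\in\Sup_2(G_r)$ is $2$-closed we have $\aut(\mathcal{A}_0)=K_0\in\Sup_2^{\min}(G_r)$. By hypothesis $\mathcal{A}_0$ is $\CI$, so $K_0$ is $G$-transjugate by \cite[Theorem~2.6]{HM}. Finally, since $K_0\preceq_G K$ and $K_0$ is $G$-transjugate, $K$ is $G$-transjugate as well, i.e. $\mathcal{B}$ is $\CI$. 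As $\mathcal{B}$ was arbitrary, Lemma~\ref{ncicrit0} yields that $G$ is an $\NCI^{(2)}$-group.

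The argument is essentially bookkeeping once the two propagation properties of $\preceq_G$ are available, so I do not expect a serious obstacle. The only step requiring genuine care is ensuring that the chosen minimal element $K_0$ lands in $\mathcal{L}(\Hol(G))$: this is exactly why the implication ``$K_1\preceq_G K_2$ and $K_2\leq\Hol(G)$ imply $K_1\leq\Hol(G)$'' is needed, in addition to the upward propagation of $G$-transjugacy — without it one could not guarantee that the minimal $S$-ring $\mathcal{A}_0$ is itself normal and thus covered by the hypothesis.
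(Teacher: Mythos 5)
Your proof is correct and follows essentially the same route as the paper: the paper establishes this lemma via exactly the two propagation properties you isolate (upward propagation of $G$-transjugacy along $\preceq_G$, and the observation that $K_1\preceq_G K_2$ with $K_2\leq\Hol(G)$ forces $K_1\leq\Hol(G)$), combined with the existence of a minimal element below any $K\in\Sup_2(G_r)$. Your identification of the normality of the minimal $S$-ring $\mathcal{A}_0$ as the one point needing care matches precisely why the paper records that second property before stating the lemma.
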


The lemma below provides an important property of schurian $S$-rings whose automorphism groups belong to $\Sup_2^{\min}(G_r)$. 

\begin{lemm}\cite[Lemma~5.2]{KM}\label{minpring}
Let $G$ be an abelian group and $\mathcal{A}$ a schurian $S$-ring over $G$ such that $\aut(\mathcal{A})\in \Sup_2^{\min}(G_r)$. Suppose that $H$ is an $\mathcal{A}$-subgroup of $G$ such that $G/H$ is a $p$-group for some prime $p$. Then $\mathcal{A}_{G/H}$ is a $p$-$S$-ring.
\end{lemm}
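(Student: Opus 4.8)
The plan is to argue by contradiction, exploiting the minimality of $K:=\aut(\mathcal{A})$ in $(\Sup_2(G_r),\preceq_G)$ to manufacture a strictly $\preceq_G$-smaller $2$-closed overgroup of $G_r$ whenever $\mathcal{A}_{G/H}$ fails to be a $p$-$S$-ring. Write $Q=G/H$ and $\bar{K}=K^{G/H}$. Since $\mathcal{A}$ is schurian, the quotient identity $\mathcal{A}_{G/H}=V(\bar{K},Q)$ holds (this is the same computation used in the proof of Lemma~\ref{genrad}), so the basic sets of $\mathcal{A}_{G/H}$ are precisely the orbits of the point stabilizer $\bar{K}_{\bar{e}}$. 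If some basic set has non-$p$-power size, then an orbit of $\bar{K}_{\bar{e}}$ has size divisible by a prime $q\neq p$, whence $q\mid|\bar{K}|$ and $\bar{K}$ is not a $p$-group.

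First I would fix a Sylow $p$-subgroup $\bar{P}$ of $\bar{K}$ containing the regular subgroup $Q_r$ (possible since $Q_r$ is a $p$-group), let $P\le K$ be its full preimage under the quotient map $K\to\bar{K}$, and set $K':=P^{(2)}$. Because $G_r^{G/H}=Q_r\le\bar{P}$ we have $G_r\le P$, so $P\in\Sup(G_r)$ and hence $K'\in\Sup_2(G_r)$; moreover $K'\le K$ since $P\le K$ and $K$ is $2$-closed.

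The heart of the argument is to establish $K'\preceq_G K$. Let $R$ be any $G$-regular subgroup of $K$. As $R\cong G$ is abelian and transitive on $G$, its image $R^{G/H}$ is an abelian transitive, hence regular, subgroup of $\bar{K}$ of order $|Q|$, i.e. a regular $p$-subgroup. By Sylow's theorem $R^{G/H}$ lies in a conjugate of $\bar{P}$, so $(R^{G/H})^{\bar{g}}\le\bar{P}$ for some $\bar{g}\in\bar{K}$; lifting $\bar{g}$ to $g\in K$ gives $(R^{g})^{G/H}=(R^{G/H})^{\bar{g}}\le\bar{P}$, and therefore $R^{g}\le P\le K'$. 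Thus every $G$-regular subgroup of $K$ is $K$-conjugate to a $G$-regular subgroup of $K'$, which is exactly the assertion $K'\preceq_G K$.

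It remains to check $K'\neq K$, and this is where the failure of the $p$-$S$-ring property is spent. Suppose $K'=P^{(2)}=K$. Then $V(P,G)=V(P^{(2)},G)=V(K,G)=\mathcal{A}$, and applying the quotient identity with $M=P$ (so $P^{G/H}=\bar{P}$) gives $\mathcal{A}_{G/H}=V(\bar{P},Q)$; but the basic sets of $V(\bar{P},Q)$ are the orbits of the $p$-group $\bar{P}_{\bar{e}}$ and hence all of $p$-power size, contradicting the offending basic set chosen above. Therefore $K'\neq K$, and since $\preceq_G$ is a partial order, $K'\preceq_G K$ together with $K'\neq K$ forces $K'\prec_G K$, contradicting the minimality of $K$ in $\Sup_2^{\min}(G_r)$. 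The main obstacle is precisely the $G$-completeness step $K'\preceq_G K$: it rests on the observation that any $G$-regular subgroup projects to a regular $p$-subgroup on $Q$ and on lifting the Sylow conjugator through the block system $G/H$, whereas the rest is routine bookkeeping once the identity $\mathcal{A}_{G/H}=V(K^{G/H},Q)$ for schurian $\mathcal{A}$ is available.
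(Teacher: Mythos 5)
The paper does not prove this lemma itself but cites it as \cite[Lemma~5.2]{KM}, and your argument is correct and is essentially the standard proof from that source: pull back a Sylow $p$-subgroup of $K^{G/H}$ containing $(G/H)_r$, use Sylow's theorem to show the preimage (hence its $2$-closure) is a $G$-complete subgroup of $K$, and invoke minimality in $\Sup_2^{\min}(G_r)$ together with the identity $V(M,G)_{G/H}=V(M^{G/H},G/H)$. All the supporting steps you use (that identity, $V(P,G)=V(P^{(2)},G)$, and that regular abelian subgroups project to regular $p$-subgroups of $K^{G/H}$) check out.
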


Further, we give a necessary condition of a non-$\NCI^{(2)}$-property for a group that will be used in the proofs of Theorems~\ref{main3} and~\ref{main4}.

\begin{lemm}\label{noncinorm}
Let $G$ be a non-$\NCI^{(2)}$-group. Then there is a $G$-regular subgroup $R\neq G_r$ of $\Hol(G)$ such that $\mathcal{A}=V(G_rR,G)$ is a normal non-$\CI$-$S$-ring. If in addition, $G$ is a $p$-group, then $\mathcal{A}$ is a $p$-$S$-ring.  
\end{lemm}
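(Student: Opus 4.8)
The plan is to unwind the definitions of $\NCI^{(2)}$ and $G$-transjugacy supplied earlier in the excerpt. By Lemma~\ref{ncicrit}, since $G$ is not $\NCI^{(2)}$, there exists a normal schurian $S$-ring $\mathcal{A}_0$ with $\aut(\mathcal{A}_0)\in \Sup_2^{\min}(G_r)\cap \mathcal{L}(\Hol(G))$ that is not $\CI$. By the correspondence between schurian $S$-rings and $2$-closed groups in $\Sup(G_r)$, and the criterion that $\mathcal{A}_0$ is $\CI$ if and only if $K_0=\aut(\mathcal{A}_0)$ is $G$-transjugate, the group $K_0$ fails to be $G$-transjugate. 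Since $\mathcal{A}_0$ is normal, $G_r \trianglelefteq K_0 \leq \Hol(G)$. Failure of $G$-transjugacy means $K_0$ contains a $G$-regular subgroup $R$ that is not conjugate in $K_0$ to $G_r$; because $G_r$ is normal in $K_0$, the only conjugate of $G_r$ is $G_r$ itself, so in fact $R \neq G_r$ (this is exactly the reformulation noted in the introduction: for normal $K$, $G$-transjugacy is equivalent to $G_r$ being the unique $G$-regular subgroup).

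Next I would pass from $K_0$ to the subgroup $\langle G_r, R\rangle$. First note $R \leq K_0 \leq \Hol(G)=N_{\sym(G)}(G_r)$, so $R$ normalizes $G_r$ and hence $K=\langle G_r,R\rangle = G_r R$ is a subgroup of $\Hol(G)$ containing $G_r$ as a normal subgroup. Set $\mathcal{A}=V(G_rR,G)=V(K,G)$. Since $K \leq \Hol(G)$, the $S$-ring $\mathcal{A}$ is normal. It remains to check $\mathcal{A}$ is not $\CI$, equivalently that $K$ is not $G$-transjugate. This holds because $R$ is a $G$-regular subgroup of $K$ with $R \neq G_r$, and $G_r$ is normal in $K$ (so again $G_r$ is the only conjugate of itself in $K$); thus $R$ is not conjugate in $K$ to $G_r$, witnessing the failure of $G$-transjugacy. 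By the $\CI$-criterion this makes $\mathcal{A}$ a normal non-$\CI$-$S$-ring, as required.

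For the additional claim, suppose $G$ is a $p$-group. Here I would invoke Lemma~\ref{minpring}. The subtle point is that Lemma~\ref{minpring} concerns the $S$-ring $\mathcal{A}_0$ whose automorphism group lies in $\Sup_2^{\min}(G_r)$, not directly the coarser $\mathcal{A}=V(\langle G_r,R\rangle,G)$ produced above. The cleanest route is therefore to arrange that the witnessing $R$ is chosen \emph{inside} the minimal group $K_0$ and to apply Lemma~\ref{minpring} to $\mathcal{A}_0$ with $H=\{e\}$: since $G/\{e\}=G$ is a $p$-group and $G_0=\{e\}$ is an $\mathcal{A}_0$-subgroup, Lemma~\ref{minpring} gives that $(\mathcal{A}_0)_{G/\{e\}}=\mathcal{A}_0$ is a $p$-$S$-ring, so every basic set of $\mathcal{A}_0$ has $p$-power size. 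One then deduces the same for $\mathcal{A}$: because $\mathcal{A} \leq \mathcal{A}_0$ (as $K=\langle G_r,R\rangle \leq K_0$ forces $V(K,G)\geq\ldots$; I would use the order-reversing correspondence $K_1\leq K_2 \iff V(K_1,G)\geq V(K_2,G)$), each basic set of $\mathcal{A}$ is a union of $K_{0,e}$-orbits on $G$, but in fact one argues directly that the orbits of $(G_rR)_e$ on $G$ have $p$-power size. The main obstacle is precisely this compatibility: ensuring the $p$-$S$-ring property, a priori guaranteed only for the minimal $\mathcal{A}_0$, transfers to the possibly larger-orbit $S$-ring $\mathcal{A}=V(\langle G_r,R\rangle,G)$. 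I expect this to be handled either by taking $R$ within $K_0$ and noting that orbit sizes of $(G_rR)_e$ divide orbit sizes of $K_{0,e}$ (hence remain $p$-powers), or by observing that any $G$-transjugacy counterexample can be refined to one whose group is minimal, so that $\langle G_r,R\rangle$ itself is minimal and Lemma~\ref{minpring} applies to it verbatim.
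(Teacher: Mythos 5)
Your skeleton matches the paper's: pick a witness $K_0\in\Sup_2(G_r)\cap\mathcal{L}(\Hol(G))$ that is not $G$-transjugate, extract a $G$-regular subgroup $R\neq G_r$ from it, and pass to $\mathcal{A}=V(G_rR,G)$. But there is a genuine gap at the central point of the lemma. Normality of $\mathcal{A}$ means $\aut(\mathcal{A})\leq\Hol(G)$, and $\aut(\mathcal{A})=(G_rR)^{(2)}$, not $G_rR$; the inclusion $G_rR\leq\Hol(G)$ does not by itself give $(G_rR)^{(2)}\leq\Hol(G)$, since $2$-closure can push a subgroup of $\Hol(G)$ past $\Hol(G)$ (e.g.\ for $G$ elementary abelian, $G_r\aut(G)\leq\Hol(G)$ but $V(G_r\aut(G),G)$ has basic sets $\{e\}$ and $G\setminus\{e\}$, so its automorphism group is all of $\sym(G)$). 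The same issue infects your non-$\CI$ step: the criterion says $\mathcal{A}$ is $\CI$ if and only if $\aut(\mathcal{A})=(G_rR)^{(2)}$ is $G$-transjugate, and non-conjugacy of $R$ to $G_r$ inside $G_rR$ need not survive in the a priori larger group $(G_rR)^{(2)}$. The paper closes both holes with one observation for which you have all the ingredients but never make: since $G_rR\leq K_0$ and $K_0$ is $2$-closed, $(G_rR)^{(2)}\leq K_0^{(2)}=K_0\leq\Hol(G)$; then $G_r\trianglelefteq (G_rR)^{(2)}$ yields both normality and the fact that $R$ is not conjugate to $G_r$ in $\aut(\mathcal{A})$. (Any non-transjugate $K_0\in\Sup_2(G_r)\cap\mathcal{L}(\Hol(G))$ suffices; the minimality supplied by Lemma~\ref{ncicrit} plays no role here.)

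The $p$-group claim is also not secured as written. Your first proposed justification, that orbit sizes of $(G_rR)_e$ divide orbit sizes of $K_{0,e}$, is false as a general principle (a subgroup's orbit is contained in, but its length need not divide, the overgroup's orbit length), and $K_{0,e}$ need not itself be a $p$-group, so nothing forces $p$-power orbits for its subgroups by that route; the detour through Lemma~\ref{minpring} is likewise unnecessary. The intended argument is a one-liner: $G_r$ and $R$ are both $p$-groups of order $|G|$ and $R$ normalizes $G_r$, so $|G_rR|=|G_r|\,|R|/|G_r\cap R|$ is a power of $p$; hence $(G_rR)_e$ is a $p$-group and its orbits, which are exactly the basic sets of $\mathcal{A}$, all have $p$-power size.
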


\begin{proof}
Since $G$ is a non-$\NCI^{(2)}$-group, there is $K\in \Sup_2(G_r)\cap \mathcal{L}(\Hol(G))$ such that $K$ is not $G$-transjugate. This implies that there is a $G$-regular subgroup $R\neq G_r$ of $K$. One can form the group $G_rR$ because $R\leq K\leq \Hol(G)=N_{\sym(G)}(G_r)$. Due to the definitions of $K$ and $R$, we have
\begin{equation}\label{2close}
(G_rR)^{(2)}\leq K^{(2)}=K\leq \Hol(G).
\end{equation}
As $G_r$ and $R$ are not conjugate in $\Hol(G)$, they are not conjugate in $(G_rR)^{(2)}$. Therefore the group $(G_rR)^{(2)}$ is not $G$-tranjugate and consequently $\mathcal{A}=V(G_rR,G)$ is a non-$\CI$-$S$-ring. Clearly, $\aut(\mathcal{A})=(G_rR)^{(2)}$ and hence the normality of $\mathcal{A}$ follows from Eq.~\eqref{2close}. If $G$ is a $p$-group, then $G_rR$ so is. This implies that every basic set of $\mathcal{A}$ being an orbit of $G_rR$ has a $p$-power size, i.e $\mathcal{A}$ is a $p$-$S$-ring. 
\end{proof}

In the end of the section, we give a criterion of a $\CI$-property for a tensor product of $S$-rings and a corollary of it which allows to construct non-$\NCI^{(2)}$-groups.

\begin{lemm}\label{tensci}
A tensor product of two $S$-rings is $\CI$ if and only if each of the operands so is. 
\end{lemm}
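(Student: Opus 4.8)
The plan is to derive the statement entirely from Eq.~\eqref{auttens} together with two general facts: that $\aut(\mathcal{B})\aut(H)\subseteq\iso(\mathcal{B})$ for every $S$-ring $\mathcal{B}$ over a group $H$ (recorded at the start of this section), and that $\phi_1\times\phi_2$ is an automorphism of $G=G_1\times G_2$ whenever $\phi_i\in\aut(G_i)$, so that $\aut(G_1)\times\aut(G_2)\leq\aut(G)$. Throughout I write $\mathcal{A}=\mathcal{A}_1\otimes\mathcal{A}_2$ with $\mathcal{A}_i=\mathcal{A}_{G_i}$ and view $\iso(\mathcal{A}_i),\aut(\mathcal{A}_i)$ as acting coordinatewise on $G$. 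The one elementary bookkeeping identity I will invoke is that for subsets $A,B\subseteq\sym(G_1)$ and $C,D\subseteq\sym(G_2)$ one has $(AB)\times(CD)=(A\times C)(B\times D)$, since both sides consist exactly of the coordinatewise maps $(ab,cd)$.

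First I would treat the implication that $\mathcal{A}$ is $\CI$ provided both operands are. Using the $\CI$-property of $\mathcal{A}_1$ and $\mathcal{A}_2$ and then the identity above,
$$\iso(\mathcal{A})=\iso(\mathcal{A}_1)\times\iso(\mathcal{A}_2)=(\aut(\mathcal{A}_1)\aut(G_1))\times(\aut(\mathcal{A}_2)\aut(G_2))=(\aut(\mathcal{A}_1)\times\aut(\mathcal{A}_2))(\aut(G_1)\times\aut(G_2)),$$
which by Eq.~\eqref{auttens} equals $\aut(\mathcal{A})(\aut(G_1)\times\aut(G_2))$. Since $\aut(G_1)\times\aut(G_2)\leq\aut(G)$, this set is contained in $\aut(\mathcal{A})\aut(G)$, which in turn is contained in $\iso(\mathcal{A})$. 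Hence all inclusions collapse to equalities and $\iso(\mathcal{A})=\aut(\mathcal{A})\aut(G)$; this direction is thus a pure sandwich argument.

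For the converse I would assume $\mathcal{A}$ is $\CI$ and show that $\mathcal{A}_1$ is $\CI$, the case of $\mathcal{A}_2$ being symmetric; only the inclusion $\iso(\mathcal{A}_1)\subseteq\aut(\mathcal{A}_1)\aut(G_1)$ needs an argument. Given $f_1\in\iso(\mathcal{A}_1)$, the coordinatewise map $g=f_1\times\id_{G_2}$ lies in $\iso(\mathcal{A}_1)\times\iso(\mathcal{A}_2)=\iso(\mathcal{A})=\aut(\mathcal{A})\aut(G)$, so I can write $g=\alpha\beta$ with $\alpha\in\aut(\mathcal{A})=\aut(\mathcal{A}_1)\times\aut(\mathcal{A}_2)$ and $\beta\in\aut(G)$. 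Both $g$ and $\alpha$ act coordinatewise, hence so does $\beta=\alpha^{-1}g$; writing $\alpha=\alpha_1\times\alpha_2$ yields $\beta=(\alpha_1^{-1}f_1)\times\alpha_2^{-1}$. A coordinatewise permutation that is a group automorphism of $G$ restricts to an automorphism of each factor, so $\alpha_1^{-1}f_1\in\aut(G_1)$, and therefore $f_1\in\aut(\mathcal{A}_1)\aut(G_1)$.

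The only nonroutine ingredient is Eq.~\eqref{auttens}, on which both directions rest; the remainder is manipulation of coordinatewise permutation groups. The step I expect to demand the most care is the extraction, in the converse, of the fact that the automorphism factor $\beta$ splits as a direct product of automorphisms of $G_1$ and $G_2$: this is precisely the place where one must use that $\beta$ is a product of the two coordinatewise maps $\alpha^{-1}$ and $g$ and so cannot mix the coordinates, so that even if $\aut(G)$ is a priori strictly larger than $\aut(G_1)\times\aut(G_2)$, the particular element produced here is forced to lie in the latter.
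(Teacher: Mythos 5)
Your proof is correct. The first direction coincides with the paper's argument verbatim: the same sandwich chain $\aut(\mathcal{A})\aut(G)\leq\iso(\mathcal{A})=\dots=\aut(\mathcal{A})(\aut(G_1)\times\aut(G_2))\leq\aut(\mathcal{A})\aut(G)$. For the converse, however, you take a genuinely different route. The paper argues via regular subgroups: it takes a $G_1$-regular subgroup $R_1\leq\aut(\mathcal{A}_1)$, observes that $R_1\times(G_2)_r$ is a $G$-regular subgroup of $\aut(\mathcal{A})=\aut(\mathcal{A}_1)\times\aut(\mathcal{A}_2)$, uses the $\CI$-property of $\mathcal{A}$ (through the equivalence with $G$-transjugacy recorded at the start of Section~5, i.e.\ \cite[Theorem~2.6]{HM}) to conjugate it to $G_r$ by some $f_1\times f_2$, and reads off $R_1^{f_1}=(G_1)_r$. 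You instead stay entirely at the level of the defining identity $\iso(\cdot)=\aut(\cdot)\aut(\cdot)$: you factor $f_1\times\id_{G_2}\in\iso(\mathcal{A})$ as $\alpha\beta$, note that $\beta=\alpha^{-1}(f_1\times\id)$ is forced to be coordinatewise, and use that a coordinatewise automorphism of $G_1\times G_2$ fixing $e$ restricts to automorphisms of the factors (which holds, since $\beta(e)=e$ forces $\beta_1(e)=\beta_2(e)=e$ and the homomorphism property then restricts to $G_1\times\{e\}$). Your version is slightly more self-contained, as it bypasses the transjugacy characterization (which the paper states for schurian $S$-rings via $V(K,G)$), at the cost of the bookkeeping about coordinatewise maps; the paper's version fits more naturally into its overall framework, where everything is ultimately phrased in terms of conjugacy of regular subgroups. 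Both arguments are sound.
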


\begin{proof}
Let $G=G_1\times G_2$ and $\mathcal{A}$ an $S$-ring over $G$. Suppose that $G_1$ and $G_2$ are $\mathcal{A}$-subgroups, $\mathcal{A}_i=\mathcal{A}_{G_i}$, $i\in \{1,2\}$, and $\mathcal{A}=\mathcal{A}_1\otimes \mathcal{A}_2$. If $\mathcal{A}_1$ and $\mathcal{A}_2$ are $\CI$, then
$$\aut(\mathcal{A})\aut(G)\leq\iso(\mathcal{A})=\iso(\mathcal{A}_1)\times \iso(\mathcal{A}_2)=$$
$$=(\aut(\mathcal{A}_1)\aut(G_1))\times (\aut(\mathcal{A}_2)\aut(G_2))=(\aut(\mathcal{A}_1)\times \aut(\mathcal{A}_2))(\aut(G_1)\times \aut(G_2))=$$
$$=\aut(\mathcal{A})(\aut(G_1)\times \aut(G_2))\leq \aut(\mathcal{A})\aut(G),$$
where the first and fourth equalities follow from the first and second parts of Eq.~\eqref{auttens}, respectively, and the second equality follows from the assumption that $\mathcal{A}_1$ and $\mathcal{A}_2$ are $\CI$. Therefore $\iso(\mathcal{A})=\aut(\mathcal{A})\aut(G)$, i.e. $\mathcal{A}$ is $\CI$. 

Conversely, suppose that $\mathcal{A}$ is $\CI$. Let $R_1$ be a $G_1$-regular subgroup of $\aut(\mathcal{A}_1)$. Then $R_1\times (G_2)_r$ is a $G$-regular subgroup of $\aut(\mathcal{A})=\aut(\mathcal{A}_1)\times \aut(\mathcal{A}_2)$. Since $\mathcal{A}$ is $\CI$, there exist $f_1\in \aut(\mathcal{A}_1)$ and $f_2\in \aut(\mathcal{A}_2)$ such that 
$$R_1^{f_1}\times (G_2)^{f_2}_r=(R_1\times (G_2)_r)^{f_1\times f_2}=G_r=(G_1)_r\times (G_2)_r.$$
Therefore $R_1^{f_1}=(G_1)_r$. Thus, $\aut(\mathcal{A}_1)$ is $(G_1)_r$-transjugate and hence $\mathcal{A}_1$ is $\CI$. The $\CI$-property of $\mathcal{A}_2$ can be checked in the same way.
\end{proof}

It should be mentioned that the ``if'' part of the above lemma in case of an abelian group follows from more general theory~\cite[Proposition~3.2,~Theorem~4.1]{KM}.

\begin{corl}\label{prodnonci}
If $G$ is a non-$\NCI^{(2)}$-group, then $G\times H$ is a non-$\NCI^{(2)}$-group for every group $H$.
\end{corl}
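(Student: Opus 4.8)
The plan is to realize a witnessing non-$\CI$-$S$-ring over $G\times H$ as a tensor product, exploiting that the $\NCI^{(2)}$-property is detected entirely by normal schurian $S$-rings via Lemma~\ref{ncicrit0}. First I would apply Lemma~\ref{noncinorm} to the non-$\NCI^{(2)}$-group $G$: it yields a $G$-regular subgroup $R\neq G_r$ of $\Hol(G)$ such that the schurian $S$-ring $\mathcal{A}=V(G_rR,G)$ is normal but not $\CI$. This is the only place where the hypothesis on $G$ is used, and since Lemma~\ref{noncinorm} makes no commutativity assumption, both $G$ and $H$ may be arbitrary.

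Next I would pair $\mathcal{A}$ with the full group ring $\mathbb{Z}H$, regarded as the $S$-ring over $H$ whose basic sets are the singletons. Here one checks the two elementary facts that $\mathbb{Z}H=V(H_r,H)$ is schurian and that $\aut(\mathbb{Z}H)=H_r$: an automorphism of this $S$-ring must fix every singleton colour class setwise, which forces it to be a right translation. In particular $\aut(\mathbb{Z}H)=H_r\leq\Hol(H)$, so $\mathbb{Z}H$ is normal. I then form $\mathcal{B}=\mathcal{A}\otimes\mathbb{Z}H$ over $G\times H$, taking $G$ and $H$ as the two $\mathcal{B}$-subgroups of the decomposition.

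It remains to verify that $\mathcal{B}$ is a normal, schurian, non-$\CI$-$S$-ring. Normality is immediate from Lemma~\ref{tensnorm}, since both operands are normal; the failure of the $\CI$-property follows from Lemma~\ref{tensci}, because $\mathcal{A}$ is non-$\CI$ and hence the tensor product cannot be $\CI$ either. Schurianness I would read off from the second part of Eq.~\eqref{auttens}: as $\aut(\mathcal{B})=\aut(\mathcal{A})\times\aut(\mathbb{Z}H)$, the orbits of $\aut(\mathcal{B})_{(e,e)}=\aut(\mathcal{A})_e\times(H_r)_e$ on $G\times H$ are exactly the products $X\times\{h\}$ with $X\in\mathcal{S}(\mathcal{A})$ (using that $\mathcal{A}$ is schurian) and $h\in H$, which is precisely $\mathcal{S}(\mathcal{B})$; thus $\mathcal{B}=V(\aut(\mathcal{B}),G\times H)$ is schurian. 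Having produced a normal schurian non-$\CI$-$S$-ring over $G\times H$, Lemma~\ref{ncicrit0} then lets me conclude that $G\times H$ is non-$\NCI^{(2)}$.

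As for where the effort sits: there is no genuine obstacle, since all three main ingredients (Lemmas~\ref{noncinorm},~\ref{tensnorm}, and~\ref{tensci}) are already in hand. The only points needing a line of verification are the identification $\aut(\mathbb{Z}H)=H_r$, which gives both normality and schurianness of $\mathbb{Z}H$, and the schurianness of the tensor product, both of which are routine consequences of Eq.~\eqref{auttens} and the definitions.
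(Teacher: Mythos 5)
Your proposal is correct and follows essentially the same route as the paper: produce a normal non-$\CI$-$S$-ring $\mathcal{A}$ over $G$, tensor it with $\mathbb{Z}H$, and invoke Lemma~\ref{tensnorm} and Lemma~\ref{tensci}. The only cosmetic differences are that the paper obtains $\mathcal{A}$ via Lemma~\ref{ncicrit} rather than Lemma~\ref{noncinorm}, and leaves the (routine) schurianness of the tensor product implicit where you spell it out.
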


\begin{proof}
Since $G$ is non-$\NCI^{(2)}$, there is a normal non-$\CI$-$S$-ring $\mathcal{A}$ over $G$ by Lemma~\ref{ncicrit}. One can form the $S$-ring $\mathcal{B}=\mathcal{A}\otimes \mathbb{Z}H$ over $G\times H$. Note that $\mathcal{B}$ is normal by Lemma~\ref{tensnorm} and non-$\CI$ by Lemma~\ref{tensci}. Therefore $G\times H$ is non-$\NCI^{(2)}$.
\end{proof}

\section{Proof of Theorem~\ref{main1}}

We start with an auxiliary lemma.

\begin{lemm}\label{sylowtens}
Let $\mathcal{A}$ be an $S$-ring over an abelian group $G$ of order~$n$. Suppose that $G_p$ is an $\mathcal{A}$-subgroup and $\mathcal{A}_{G_p}$ is a $p$-$S$-ring for every $p\in \pi(n)$. Then 
\begin{equation}\label{bigtens}  
\mathcal{A}=\bigotimes \limits_{p\in \pi(n)} \mathcal{A}_{G_p}.
\end{equation}
\end{lemm}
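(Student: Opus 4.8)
The plan is to prove that $\mathcal{A}$ decomposes as the tensor product of its Sylow restrictions by induction on the number of primes $|\pi(n)|$. Write $n = p_1^{a_1}\cdots p_k^{a_k}$ and set $P = G_{p_1}$ and $Q = \prod_{i\geq 2} G_{p_i}$, so that $G = P\times Q$ with $|P|$ and $|Q|$ coprime. By hypothesis $P = G_{p_1}$ is an $\mathcal{A}$-subgroup, and I would first observe that $Q$ is also an $\mathcal{A}$-subgroup: indeed $Q = \prod_{i\geq 2} G_{p_i}$ is the product of the remaining Sylow subgroups, each of which is an $\mathcal{A}$-subgroup by hypothesis, and a product of $\mathcal{A}$-subgroups is again an $\mathcal{A}$-subgroup. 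Thus both operands of the putative tensor product are $\mathcal{A}$-sets.

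The key step is to show $\mathcal{A} = \mathcal{A}_P \otimes \mathcal{A}_Q$. By Lemma~\ref{proj}(2) we already have $\mathcal{A}\geq \mathcal{A}_P\otimes \mathcal{A}_Q$, and the same lemma gives equality as soon as one of the restrictions is the full group ring. So the heart of the matter is to establish that $\mathcal{A}_P = \mathbb{Z}P$ (or, symmetrically, $\mathcal{A}_Q = \mathbb{Z}Q$). Here I would use the hypothesis that $\mathcal{A}_P$ is a $p_1$-$S$-ring together with the coprimality of $|P|$ and $|Q|$. The decisive observation is that a basic set $X$ of $\mathcal{A}$ projects to basic sets $X_P\in\mathcal{S}(\mathcal{A}_P)$ and $X_Q\in\mathcal{S}(\mathcal{A}_Q)$ by Lemma~\ref{proj}(1); since $\mathcal{A}_P$ is a $p_1$-$S$-ring its basic sets have $p_1$-power size, while $X_Q$ has a size dividing a power of the primes dividing $|Q|$. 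I would argue that the coprimality forces each projection to interact trivially, so that in fact $X = X_P\times X_Q$, which is precisely the tensor decomposition. Concretely, for any element $x\in\mathcal{A}_P$ of order a power of $p_1$, the singleton-detection via rational conjugacy (Lemma~\ref{burn}), applied with multipliers $m$ coprime to $|G|$ that fix $P$ pointwise modulo suitable powers, lets one separate the $P$-part, and one concludes that $\{x\}$ is a basic set of $\mathcal{A}_P$ for every $x\in P$, whence $\mathcal{A}_P = \mathbb{Z}P$ by Lemma~\ref{groupring}.

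With $\mathcal{A}_P = \mathbb{Z}P$ in hand, Lemma~\ref{proj}(2) yields $\mathcal{A} = \mathcal{A}_P\otimes \mathcal{A}_Q = \mathbb{Z}P\otimes \mathcal{A}_Q$. Finally I would apply the induction hypothesis to $\mathcal{A}_Q$, which is an $S$-ring over the abelian group $Q$ whose Sylow subgroups $G_{p_2},\ldots,G_{p_k}$ are $\mathcal{A}_Q$-subgroups with $p_i$-$S$-ring restrictions $(\mathcal{A}_Q)_{G_{p_i}} = \mathcal{A}_{G_{p_i}}$, obtaining $\mathcal{A}_Q = \bigotimes_{i\geq 2}\mathcal{A}_{G_{p_i}}$. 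Combining the two displays gives Eq.~\eqref{bigtens}, and the associativity and commutativity of the tensor product of $S$-rings completes the induction.

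I expect the main obstacle to be the middle step, namely rigorously proving $\mathcal{A}_P = \mathbb{Z}P$ from the $p$-$S$-ring hypothesis and coprimality. The subtlety is that one must rule out the possibility that $\mathcal{A}_P$ is a proper $p_1$-$S$-ring with nontrivial basic sets; the leverage has to come from the fact that multipliers coprime to $|G|$ act on $P$ through the full automorphism range available on a $p_1$-group while fixing the structure, and one must verify that the basic set sizes, being simultaneously $p_1$-powers (from the $p_1$-$S$-ring property) and consistent with the coprime decomposition, are forced to be $1$. Once this sizing argument is pinned down, the remainder is a routine application of Lemma~\ref{proj} and the induction.
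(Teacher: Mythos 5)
Your overall framework (induction on $|\pi(n)|$, splitting $G$ as $P\times Q$ with $Q$ the Hall $p'$-subgroup, and using Lemma~\ref{proj}(1) to see that projections of basic sets are basic) matches the paper's proof. But the step you yourself identify as the heart of the matter --- proving $\mathcal{A}_P=\mathbb{Z}P$ --- is not a gap to be filled; it is simply false. Take $G=C_p^2\times C_q$ and $\mathcal{A}=(\mathbb{Z}C_p\wr\mathbb{Z}C_p)\otimes\mathbb{Z}C_q$: every hypothesis of the lemma holds ($\mathcal{A}_{G_p}$ is a $p$-$S$-ring, $\mathcal{A}_{G_q}$ is a $q$-$S$-ring, both Sylow subgroups are $\mathcal{A}$-subgroups), the conclusion holds, yet $\mathcal{A}_{G_p}=\mathbb{Z}C_p\wr\mathbb{Z}C_p$ has a basic set of size $p$ and is not the group ring. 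The error stems from reading Lemma~\ref{proj}(2) as if the group-ring condition were necessary for equality; it is only sufficient. No amount of rational conjugation (Lemma~\ref{burn}) will shrink the basic sets of $\mathcal{A}_P$ to singletons, because the $p$-$S$-ring hypothesis guarantees only $p$-power sizes.

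The correct leverage is the counting argument you gesture at and then abandon. For $X\in\mathcal{S}(\mathcal{A})_{G\setminus(P\cup Q)}$, Lemma~\ref{intersection0} shows that $\lambda=|X\cap Qx|$ and $\mu=|X\cap Px|$ do not depend on $x\in X$, whence $|X|=\lambda|X_P|=\mu|X_Q|$. By the hypothesis, $|X_P|$ is a $p$-power; by the induction hypothesis applied to $\mathcal{A}_Q$ (which is a tensor product of $q$-$S$-rings for the primes $q\neq p$), $|X_Q|$ is a $p'$-number. Coprimality then forces $|X_Q|$ to divide $\lambda$ and $|X_P|$ to divide $\mu$, so $|X|\geq|X_P||X_Q|$; combined with the trivial inclusion $X\subseteq X_P\times X_Q$ this gives $X=X_P\times X_Q$, which is the tensor decomposition. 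Note that this is where the $p$-$S$-ring hypothesis is actually used --- to make the two projection sizes coprime --- not to trivialize one of the factors.
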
 

\begin{proof}
We proceed by the induction on~$k=|\pi(n)|$. If $k=1$, then the lemma is trivial. Let $k\geq 2$, $p\in \pi(n)$, and $P=G_p$ and $H$ the Sylow $p$-subgroup and Hall $p^\prime$-subgroup of $G$, respectively. Then $P$ and $H$ are nontrivial $\mathcal{A}$-subgroups of $G$ and $G=P\times H$. By the induction hypothesis, Eq.~\eqref{bigtens} holds for $\mathcal{A}_H$. This implies that every basic set of $\mathcal{A}_H$ is of $p^\prime$-size.

Let $X\in \mathcal{S}(\mathcal{A})_{G\setminus (P\cup H)}$. To prove the lemma, it is enough to show that $X=X_P\times X_H$. By Lemma~\ref{intersection0}, the numbers $\lambda=|X\cap Hx|$ and $\mu=|X\cap Px|$ do not depend on $x\in X$. Therefore 
\begin{equation}\label{size}
|X|=\lambda|X_P|=\mu|X_H|.
\end{equation}
Due to Lemma~\ref{proj}(1), the sets $X_P$ and $X_H$ are basic sets of $\mathcal{A}_P$ and $\mathcal{A}_H$, respectively. So $|X_P|$ and $|X_H|$ are $p$- and $p^\prime$-numbers, respectively. Eq.~\eqref{size} yields that $\lambda$ is divisible by~$|X_H|$, whereas $\mu$ is divisible by~$|X_P|$. Consequently, $|X|\geq |X_P||X_H|$. On the other hand, obviously, $|X|\leq |X_P||X_H|$ and hence $|X|=|X_P||X_H|$. Thus, $X=X_P\times X_H$ as required.
\end{proof}

The ``only if'' part immediately follows from Corollary~\ref{prodnonci}. Further, we prove the ``if'' part. Let $G$ be an abelian group of order~$n$ whose all Sylow subgroups are $\NCI^{(2)}$. Due to Lemma~\ref{ncicrit}, it is enough to show that $\mathcal{A}=V(K,G)$ is a $\CI$-$S$-ring for every $K\in \Sup_2^{\min}(G_r)\cap \mathcal{L}(\Hol(G))$. Since $K\leq \Hol(G)$, the $S$-ring $\mathcal{A}$ is cyclotomic. By Lemma~\ref{cyclchar}, the Sylow $p$-subgroup~$P$ and the Hall $p^\prime$-subgroup $H$ of $G$ are $\mathcal{A}$-subgroups for every $p\in \pi(n)$. Lemma~\ref{minpring} implies that $\mathcal{A}_P\cong \mathcal{A}_{G/H}$ is a $p$-$S$-ring. Therefore $\mathcal{A}$ satisfies the conditions of Lemma~\ref{sylowtens} and hence Eq.~\eqref{bigtens} holds for $\mathcal{A}$. Now the required follows from the condition of the theorem, Lemma~\ref{tensnorm}, and Lemma~\ref{tensci}.

\section{Proof of Theorem~\ref{main2}}

We start the proof with two keynote lemmas. 

\begin{lemm}\label{small2groups}
The groups $C_{2^k}$, $k\geq 3$, $C_4\times C_2$, and $C_4\times C_4$ are not $\NCI^{(2)}$-groups.
\end{lemm}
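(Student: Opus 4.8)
The goal is to show that each of $C_{2^k}$ ($k\geq 3$), $C_4\times C_2$, and $C_4\times C_4$ fails to be $\NCI^{(2)}$. By Lemma~\ref{ncicrit0}, it suffices to exhibit, for each such group $G$, a normal schurian $S$-ring over $G$ that is \emph{not} $\CI$; equivalently, by Lemma~\ref{noncinorm} read in reverse, to produce a normal $S$-ring $\mathcal{A}=V(G_rR,G)$ admitting a $G$-regular subgroup $R\neq G_r$ of $\Hol(G)$ that is not conjugate to $G_r$ in $\aut(\mathcal{A})$.

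My plan is to treat the three families separately, but in each case by an explicit construction. The cleanest route for the cyclic groups $C_{2^k}$ with $k\geq 3$ is to invoke the known non-$\CI$ example: Li's normal non-$\CI$ Cayley digraph over a cyclic $2$-group of order at least $8$ (cited in the introduction via~\cite{Li}, Section~6.4). I would reconstruct its $S$-ring as a cyclotomic $S$-ring $\mathcal{A}=\cyc(M,C_{2^k})$ for a suitable cyclic $M\leq \aut(C_{2^k})$, check normality (automatic since it is cyclotomic, $\aut(\mathcal{A})\leq\Hol(G)$), and then exhibit a second $G$-regular subgroup $R$ arising from a different embedding of $C_{2^k}$ into $\aut(\mathcal{A})$ that $\aut(\mathcal{A})$ cannot fuse to $G_r$. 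Concretely, one uses that in $\aut(C_{2^k})$ for $k\geq 3$ there is an involution (e.g.\ $x\mapsto x^{2^{k-1}-1}$ or $x\mapsto x^{2^{k-1}+1}$) whose associated twisted regular copy of the group is not conjugate back to $G_r$.

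For $C_4\times C_2$ and $C_4\times C_4$ the strategy is the same but the explicit $S$-rings must be constructed by hand. The natural candidates are generalized wreath products or cyclotomic $S$-rings built from an automorphism of order $2$ of the group that preserves the Sylow structure; one then checks normality via Lemma~\ref{tensnorm} or directly that $\aut(\mathcal{A})\leq\Hol(G)$, and exhibits a competing regular subgroup. I would verify non-$\CI$-ness by directly computing $\iso(\mathcal{A})$ and showing it strictly exceeds $\aut(\mathcal{A})\aut(G)$, or — more efficiently — by finding two non-conjugate $G$-regular subgroups inside $\aut(\mathcal{A})$ using the centralizer analysis of Lemma~\ref{intersection}: any such $R$ meets $G_r$ precisely in $V_r$ where $V=\O_\theta(\mathcal{A})$, so controlling the thin radical pins down the possible $R$.

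The main obstacle I anticipate is verifying that the two regular subgroups $G_r$ and $R$ are genuinely \emph{not} conjugate in the full group $\aut(\mathcal{A})$ (not merely in $\Hol(G)$): conjugacy in $\Hol(G)$ is easy to rule out by comparing the action on $G$ as an automorphism, but $\aut(\mathcal{A})$ may a priori be larger, and for a normal $S$-ring one must confirm $\aut(\mathcal{A})\leq\Hol(G)$ so that the $\Hol(G)$-computation transfers. For the cyclic case this is handled by the classical structure of $S$-rings over cyclic groups; for $C_4\times C_4$ the analysis is the most delicate, since the $2$-dimensional structure allows more automorphisms and I would need to compute $\aut(\mathcal{A})$ explicitly, likely presenting each example via generators and confirming the orbit configuration on $G^2$ matches the claimed basic sets.
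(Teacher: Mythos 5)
Your overall strategy matches the paper's: for the cyclic groups $C_{2^k}$, $k\geq 3$, cite Li's normal non-$\CI$ example (the paper does exactly this, deducing non-$\NDCI$ and hence non-$\NCI^{(2)}$ with no further work), and for $C_4\times C_2$ and $C_4\times C_4$ build a cyclotomic $S$-ring $\cyc(\langle\sigma\rangle,G)$ from a suitable order-$2$ automorphism and exhibit a second $G$-regular subgroup. The paper's $\sigma$ sends $a\mapsto ab_0$, $b\mapsto b$ (with $a,b$ generators and $b_0$ the involution in $\langle b\rangle$), and the competing regular subgroup is $R=\langle a_r\sigma, b_r\rangle$. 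So far your plan is the right one.

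However, there is a genuine gap at the central step. You assert that normality is ``automatic since it is cyclotomic, $\aut(\mathcal{A})\leq\Hol(G)$.'' This is false: a cyclotomic $S$-ring $\cyc(M,G)=V(G_rM,G)$ is schurian, but its automorphism group is the $2$-closure $(G_rM)^{(2)}$, which can be far larger than $G_rM$ and need not lie in $\Hol(G)$ (e.g.\ $\cyc(\aut(C_p),C_p)$ has automorphism group $\sym(C_p)$). Verifying $\aut(\mathcal{A})=G_r\rtimes M$ is precisely the nontrivial content of the construction, and the paper resorts to a computer calculation for it. Conversely, the step you single out as ``the main obstacle'' --- showing $R$ is not conjugate to $G_r$ in all of $\aut(\mathcal{A})$ --- is a non-issue: once $\mathcal{A}$ is known to be normal, $G_r$ is normal in $\aut(\mathcal{A})$, so $G_r$ is its own unique conjugate and any $G$-regular $R\neq G_r$ inside $\aut(\mathcal{A})$ immediately witnesses failure of transjugacy (this is stated in the introduction). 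In short, your proposal underestimates the normality verification, which is where the real work lies, and overestimates the conjugacy verification, which is trivial for normal $S$-rings.
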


\begin{proof}
From~\cite[Example~6.10]{Li} it follows that $C_{2^k}$, $k\geq 3$, is a non-$\NDCI$-group and hence it is a non-$\NCI^{(2)}$-group. Further, we are going to construct non-$\CI$ normal schurian $S$-rings over $C_4 \times C_2$ and $C_4\times C_4$. In view of Lemma~\ref{ncicrit0}, this will imply that $C_4 \times C_2$ and $C_4\times C_4$ are non-$\NCI^{(2)}$.

Let $A\cong C_4$, $B\cong C_2$ or $B\cong C_4$, $A_0$ and $B_0$ the subgroups of order~$2$ of $A$ and $B$, respectively, and $G=A\times B$. Denote generators of $A$, $B$, and $B_0$ by $a$, $b$, and $b_0$, respectively. Let $\sigma\in \aut(G)$ such that $a^\sigma=ab_0$ and $b^\sigma=b$. Put $M=\langle \sigma \rangle$ and $\mathcal{A}=\cyc(M,G)$. A computer calculation using~\cite{GAP} yields that $\aut(\mathcal{A})=G_r\rtimes M$ and hence $\mathcal{A}$ is normal. Put $R=\langle a_r\sigma,b_r\rangle$. Clearly, $R\leq \aut(\mathcal{A})$. It can be verified explicitly that $|a_r\sigma|=4$ and $a_r\sigma$ and $b_r$ commute. So $R\cong G$. The transversal $\{e,ab_0,a_0b_0,aa_0\}$ for $B$ in $G$ is an orbit of $\langle a_r\sigma \rangle$. This implies that $R$ is transitive on $G$. Therefore $R$ is a regular subgroup of $\aut(\mathcal{A})$ isomorphic to $G$ and $R\neq G_r$. Thus, $\mathcal{A}$ is non-$\CI$ as desired.
\end{proof}

\begin{lemm}\label{rank62groups}
The groups $C_2^k$, $k\geq 6$, are not $\NCI^{(2)}$-groups.
\end{lemm}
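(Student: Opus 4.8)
The plan is to reduce everything to the single group $C_2^6$ and then build one explicit normal non-$\CI$ $S$-ring over it. Since $C_2^k=C_2^6\times C_2^{k-6}$ for every $k\geq 6$, Corollary~\ref{prodnonci} reduces the statement to showing that $G=C_2^6$ is not an $\NCI^{(2)}$-group, and by Lemma~\ref{ncicrit0} it suffices to produce a normal schurian $S$-ring $\mathcal{A}$ over $G$ that is not $\CI$. Here normality is the decisive feature: once $\mathcal{A}$ is normal we have $G_r\trianglelefteq\aut(\mathcal{A})$, so any $G$-regular subgroup $R\leq\aut(\mathcal{A})$ with $R\neq G_r$ is automatically non-conjugate to $G_r$ in $\aut(\mathcal{A})$ (a normal subgroup is its own unique conjugate), whence $\aut(\mathcal{A})$ is not $G$-transjugate. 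Thus the whole task becomes: find a normal cyclotomic $\mathcal{A}=\cyc(M,G)$ together with a second $G$-regular subgroup $R\neq G_r$ inside $G_rM\leq\aut(\mathcal{A})$.

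To manufacture such a pair $(M,R)$ I would use the correspondence between regular abelian subgroups of $\AGL(6,2)=\Hol(G)$ and commutative nilpotent $\mF_2$-algebra structures on $G=\mF_2^6$. Write $G=U\oplus Z$ (for a concrete run one may take $\dim U=\dim Z=3$), fix a suitable nonzero alternating bilinear map $\beta\colon U\times U\to Z$, and define a product on $G$ by $x\cdot y=\beta(x_U,y_U)\in Z$, with $Z$ annihilating everything. This product is commutative, associative, nilpotent of class $2$, and satisfies $x\cdot x=e$ for all $x$. Passing to the circle operation $x\circ a=x+a+x\cdot a$, the maps $\rho_a\colon x\mapsto x\circ a$ form a subgroup $R=\{\rho_a:a\in G\}$ of $\Hol(G)$; the identity $\rho_a^2(x)=x\circ(a\circ a)=x$ (since $a\circ a=a\cdot a=e$) shows $R$ is elementary abelian, hence $R\cong C_2^6=G$, while $a\mapsto\rho_a(e)=a$ shows $R$ is transitive, so regular. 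Since $\rho_a(x)=(I+r_a)(x)+a$ with $r_a\colon x\mapsto\beta(x_U,a_U)$, the linear parts of $R$ form the group $M=\{I+r_a:a\in G\}\leq\aut(G)$, so $R\leq G_rM$; and $\beta\neq e$ forces $M\neq 1$ and $R\neq G_r$. I then set $\mathcal{A}=\cyc(M,G)=V(G_rM,G)$, whose basic sets are the $M$-orbits $x+\beta(x_U,U)$; by construction $R\leq G_rM\leq\aut(\mathcal{A})$.

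With $R$ and $\mathcal{A}$ in hand, everything hinges on one point, which I expect to be the main obstacle: verifying that $\mathcal{A}$ is normal, i.e. $\aut(\mathcal{A})=G_rM$, equivalently that the point stabiliser $\aut(\mathcal{A})_e$ equals $M$ (so that $\aut(\mathcal{A})\leq\Hol(G)$). The inclusion $\aut(\mathcal{A})\geq G_rM$ is automatic for a cyclotomic ring, but the reverse inclusion is genuinely sensitive to the choice of $\beta$: for a generic form $\cyc(M,G)$ acquires automorphisms outside $\Hol(G)$ and fails to be normal, and one expects it to be exactly this normality that cannot be achieved by the analogous rank $\leq 5$ constructions (consistent with $C_2^k$ being $\NCI^{(2)}$ for $k\leq 5$). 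For one fixed concrete $\beta$ on $\mF_2^6$ the condition $\aut(\mathcal{A})_e=M$ is a finite check on a permutation group of degree $64$, which I would settle by a direct computation with~\cite{GAP}, exactly as in the proof of Lemma~\ref{small2groups}, confirming $\aut(\cyc(M,G))=G_r\rtimes M$.

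Granting normality, the argument closes at once: $\mathcal{A}$ is a normal schurian $S$-ring over $G$, and $R$ is a $G$-regular subgroup of $\aut(\mathcal{A})$ distinct from the normal subgroup $G_r$, so $R$ is not conjugate to $G_r$ and $\aut(\mathcal{A})$ is not $G$-transjugate. Hence $\mathcal{A}$ is non-$\CI$, and by Lemma~\ref{ncicrit0} the group $G=C_2^6$ is not $\NCI^{(2)}$. The reduction of the first paragraph via Corollary~\ref{prodnonci} then yields the claim for all $C_2^k$ with $k\geq 6$.
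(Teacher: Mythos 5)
Your overall architecture is correct: the reduction to $C_2^6$ via Corollary~\ref{prodnonci} and Lemma~\ref{ncicrit0} is exactly the paper's, and your observation that for a \emph{normal} schurian $S$-ring the mere existence of a $G$-regular subgroup $R\neq G_r$ of $\aut(\mathcal{A})$ already kills the $\CI$-property is valid and is used elsewhere in the paper. The nilpotent-algebra construction itself is sound ($R=\{\rho_a\}$ really is an elementary abelian regular subgroup of $G_rM\leq\aut(\cyc(M,G))$, and $R\neq G_r$ once $a\mapsto r_a$ is nonzero). Moreover, with the natural choice $Z=\Lambda^2 U$ and $\beta(x,y)=x\wedge y$, the maps $r_a$ sweep out precisely the space of alternating $3\times 3$ matrices over $\mathbb{F}_2$, so your $M$ is, up to a change of basis, exactly the unitriangular matrix group in the paper's proof -- which is in turn extracted from Nowitz's non-$\CI$ Cayley graph over $C_2^6$~\cite{Now}. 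So you have essentially reconstructed the paper's example; your derivation of the non-$\CI$ part is in fact cleaner, since you exhibit the second regular subgroup directly instead of identifying $\aut(\mathcal{A})$ with the automorphism group of Nowitz's graph and invoking his theorem.

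The genuine gap is the one you yourself flag: normality. You fix no concrete $\beta$, you concede that for a generic $\beta$ the ring $\cyc(M,G)$ acquires automorphisms outside $\Hol(G)$, and the decisive equality $\aut(\mathcal{A})_e=M$ is only promised as a computation to be done later. Since the entire lemma stands or falls with this equality -- without it $\mathcal{A}$ need not be normal, $G_r$ need not be normal in $\aut(\mathcal{A})$, and the non-conjugacy argument collapses -- the proof as written does not establish the statement. At a minimum you must commit to one explicit $\beta$ and assert the outcome of the finite check, which is what the paper does: for its explicit $M$ a computation with~\cite{GAP} (based on the Weisfeiler--Leman algorithm~\cite{WeisL}) confirms $\aut(\mathcal{A})=G_r\rtimes M$. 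The gap is fillable, because for $\beta=\wedge$ your $M$ coincides with the paper's and the reported computation certifies normality for that choice; but as submitted the argument proves only ``there is a candidate construction, pending verification,'' not the lemma.
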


\begin{proof}
In view of Corollary~\ref{prodnonci}, it is enough to prove that the group $C_2^6$ is not an $\NCI^{(2)}$-group. We are going to show that the $S$-ring of a non-$\CI$-Cayley graph constructed in~\cite{Now} is a normal non-$\CI$-$S$-ring over $C_2^6$. Let $G\cong C_2^6$ and $\{a_1,a_2,a_3,a_4,a_5,a_6\}$ a generating set of $G$. Let $\overbar{a}=(a_1,a_2,a_3,a_4,a_5,a_6)$ and $M\leq \aut(G)$ consisting of all automorphisms of $G$ of the form $\overbar{a}\mapsto W\overbar{a}$, where $W$ runs over the group 
$$\left\{ \left(\begin{smallmatrix}
1 & 0 & 0 & 0 & \alpha & \beta\\
0 & 1 & 0 & \alpha & 0 & \gamma\\
0 & 0 & 1 & \beta & \gamma & 0\\
0 & 0 & 0 & 1 & 0 & 0\\
0 & 0 & 0 & 0 & 1 & 0\\
0 & 0 & 0 & 0 & 0 & 1
\end{smallmatrix}\right)|~\alpha,\beta,\gamma\in \mathbb{F}_p\right\}\leq \GL_2(6).$$
Put $\mathcal{A}=\cyc(M,G)$. A computer calculation using~\cite{GAP} based on the Weisfeiler-Leman algorithm~\cite{WeisL} implies that $\mathcal{A}$ is the smallest (with respect to inclusion) $S$-ring over $G$ for which a connection set of the non-$\CI$-Cayley graph~$X$ from~\cite{Now} is an $\mathcal{A}$-set. Therefore $\aut(\mathcal{A})=\aut(X)$ (see, e.g.,~\cite[Corollary~2.6.6]{CP}). So by~\cite{Now}, the group $\aut(\mathcal{A})$ has at least two non-conjugate regular subgroups isomorphic to $G$. Thus, $\mathcal{A}$ is non-$\CI$.

To prove the lemma, it remains to verify that $\mathcal{A}$ is normal. A computer calculation using~\cite{GAP} implies that $\aut(\mathcal{A})=G_r\rtimes M$ and we are done. 
\end{proof}

Let $G$ be an abelian $\NCI^{(2)}$-$2$-group. Then $G$ is a direct product of cyclic $2$-groups. If one of these cyclic groups is of order at least~$8$, then $G$ is non-$\NCI^{(2)}$ by Corollary~\ref{prodnonci} and Lemma~\ref{small2groups}. Therefore $G$ is a direct product of cyclic groups of orders~$2$ or~$4$. If $G$ has a cyclic subgroup of order~$4$, then $G\cong C_4$ because otherwise $G=G_1\times G_2$ for some group $G_2$, where $G_1\cong C_4\times C_2$ or $G_1\cong C_4\times C_4$, and hence $G$ is non-$\NCI^{(2)}$ by Corollary~\ref{prodnonci} and Lemma~\ref{small2groups}. If $G$ has no a cyclic subgroup of order~$4$, then $G\cong C_2^k$ for some $k\geq 1$. Lemma~\ref{rank62groups} implies that $k\leq 5$. Thus, if $G$ is an abelian $\NCI^{(2)}$-$2$-group, then $G\cong C_4$ or $G\cong C_2^k$ for some $k\leq 5$. The latter groups are $\CI^{(2)}$-groups (see~\cite{FK,M1}) and hence $\NCI^{(2)}$.

\section{Proof of Theorem~\ref{main3}}

Let $p$ be an odd prime, $k\geq 1$ an integer, and $G$ is isomorphic to one of the groups $C_{p^k}$ and $C_{p^k}\times C_p$. Assume that $G$ is a non-$\NCI^{(2)}$-group. Then there is a $G$-regular subgroup $R\neq G_r$ of $\Hol(G)$ such that $\mathcal{A}=V(K,G)$ is a normal non-$\CI$-$p$-$S$-ring, where $K=G_rR$, by Lemma~\ref{noncinorm}. Clearly, $\mathcal{A}=\cyc(K_e,G)$. Put $V=\O_{\theta}(\mathcal{A})$.

Let $G\cong C_{p^k}$. Then from~\cite[Theorem~4.1]{EP3} it follows that $|\rad(\mathcal{A})|=1$, where $\rad(\mathcal{A})$ is a subgroup generated by the radicals of all basic sets of $\mathcal{A}$ containing a generator of $G$, or $\mathcal{A}$ is decomposable. In the former case, $|K_e|\leq p-1$ by~\cite[Lemma~5.2]{EP2}, a contradiction to the fact that $K$ and hence $K_e$ are $p$-groups. In the latter one, we obtain a contradiction to Corollary~\ref{wrnonnorm}.

Now let $G\cong C_{p^k}\times C_p$. The subgroup $E$ of $G$ isomorphic to $C_p\times C_p$ is characteristic. So $E$ is an $\mathcal{A}$-subgroup by Lemma~\ref{cyclchar}. If $\mathcal{A}_E\cong \mathbb{Z}C_p\wr \mathbb{Z}C_p$, then $\mathcal{A}$ has a basic set which is a coset by a subgroup of $V$ of order~$p$. Therefore $\mathcal{A}$ is decomposable by Lemma~\ref{interrad}, a contradiction to Corollary~\ref{wrnonnorm}. Thus,
\begin{equation}\label{thinsubset}
E\leq V.
\end{equation}

Suppose that $X\in \mathcal{S}(\mathcal{A})$ contains an element of order~$p^k$ and $U=\langle X \rangle$. Then $U\cong C_{p^k}$ or $U=G$. In the former case, Eq.~\eqref{thinsubset} implies that there is an $\mathcal{A}$-subgroup $L\leq E$ of order~$p$ such that $G=U\times L$. So $\mathcal{A}=\mathcal{A}_U\otimes \mathbb{Z}L$ by Lemma~\ref{proj}(2). Since $U$ is cyclic, $\mathcal{A}_U$ is $\CI$ by the first part of the theorem and hence $\mathcal{A}$ is $\CI$ by Lemma~\ref{tensci}, a contradiction to the fact that $\mathcal{A}$ is non-$\CI$. 

In the latter case when $U=G$, we have
$$\bigcup X^{(m)}=G\setminus H$$
for some $H<G$, where $m$ runs over all integers coprime to~$p$. Observe that $X^{(m)}\in \mathcal{S}(\mathcal{A})$ and $\rad(X^{(m)})=\rad(X)$ for every $m$ coprime to~$p$. Indeed, the first part follows from Lemma~\ref{burn}, whereas the second one is clear. Therefore $H$ is an $\mathcal{A}$-subgroup and $\mathcal{A}$ is the $H/L$-wreath product, where $L=\rad(X)$. Lemma~\ref{genrad} yields that $|L|>1$. Thus, $\mathcal{A}$ is decomposable, a contradiction to Corollary~\ref{wrnonnorm}.

\section{Proof of Theorem~\ref{main4}}
 
Let $p$ be an odd prime, $n\geq 1$, and $G$ an abelian group of order~$p^n$. Assume that $G$ is a non-$\NCI^{(2)}$-group. Then there is a $G$-regular subgroup $R\neq G_r$ of $\Hol(G)$ such that $\mathcal{A}=V(K,G)$, where $K=G_rR$, is a normal non-$\CI$-$p$-$S$-ring by Lemma~\ref{noncinorm}. Clearly, $K\leq \Hol(G)$ and hence $\mathcal{A}=\cyc(M,G)$, where $M=K_e$. One can see that $\mathcal{A}\neq\mathbb{Z}G$ because $R\neq G_r$. Put $V=\O_{\theta}(\mathcal{A})$. Further, we will prove several auxiliary lemmas.

\begin{lemm}\label{orderthin}
In the above notations, $p^2\leq |V| \leq |G|/p^2$.
\end{lemm}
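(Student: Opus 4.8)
The plan is to derive both inequalities from the indecomposability of $\mathcal{A}$. Since $\mathcal{A}$ is a normal $S$-ring over the abelian group $G$ of odd order~$p^n$, Corollary~\ref{wrnonnorm} guarantees that $\mathcal{A}$ is indecomposable, i.e. it is not a nontrivial generalized wreath product for any $\mathcal{A}$-section. I would therefore show that each of the two borderline cases $|V|=p$ and $|G:V|=p$ forces $\mathcal{A}$ to be such a nontrivial wreath product, producing a contradiction. By Lemma~\ref{psring}(1) we already know $|V|>1$, and since $\mathcal{A}\neq\mathbb{Z}G$ (because $R\neq G_r$) Lemma~\ref{groupring} gives $V\neq G$; so it only remains to exclude the values $|V|=p$ and $|G:V|=p$.

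For the lower bound $|V|\geq p^2$, I would assume $|V|=p$. Then $V$ has prime order and $\mathcal{A}_V=\mathbb{Z}V$ by the very definition of the thin radical. As $\mathcal{A}\neq\mathbb{Z}G$, Lemma~\ref{setcoset} produces a basic set $X\in\mathcal{S}(\mathcal{A})_{G\setminus V}$ that is a coset of a nontrivial subgroup of~$V$; since $|V|=p$, that subgroup must be $V$ itself, so $X=Vx$ for some $x\in G$. The present setup ($G$ abelian, $R\neq G_r$ a $G$-regular subgroup, $K=\langle G_r,R\rangle$, $\mathcal{A}=V(K,G)$) is exactly the hypothesis of Lemma~\ref{intersection}, so Lemma~\ref{notsizep} applies with $L=V$ and yields that $\mathcal{A}$ is a nontrivial $U/V$-wreath product for some proper $\mathcal{A}$-subgroup~$U$. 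This contradicts indecomposability, whence $|V|\geq p^2$.

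For the upper bound $|V|\leq|G|/p^2$, equivalently $|G:V|\geq p^2$, I would assume $|G:V|=p$. Then Lemma~\ref{psring}(3) writes $\mathcal{A}=\mathbb{Z}V\wr_{V/L}\mathbb{Z}(G/L)$ for some nontrivial $L\leq V$. Since $L\neq\{e\}$ and $V\neq G$, this is a nontrivial $V/L$-wreath product, again contradicting indecomposability; hence $|G:V|\geq p^2$.

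The only routine verifications are the hypothesis checks feeding Lemma~\ref{notsizep} (that $\mathcal{A}_V=\mathbb{Z}V$ and that a thin $V$-coset basic set exists), both of which are immediate. The conceptual crux — and the sole place where the oddness of~$p$ is essential — is the reduction to decomposability together with the invocation of Corollary~\ref{wrnonnorm}: it is precisely indecomposability that both borderline cases violate, and this hinges on $G$ having odd order, which is exactly where the argument would break for $p=2$.
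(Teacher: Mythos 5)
Your proof is correct and takes essentially the same route as the paper's: both borderline cases $|V|=p$ and $|G:V|=p$ are reduced to a nontrivial generalized wreath product decomposition of $\mathcal{A}$, contradicting the indecomposability guaranteed by Corollary~\ref{wrnonnorm}. The only cosmetic difference is that for the lower bound you obtain the $V$-coset basic set feeding Lemma~\ref{notsizep} from Lemma~\ref{setcoset}, whereas the paper extracts it from Lemma~\ref{psring}(2) and~(6); the treatment of the upper bound via Lemma~\ref{psring}(3) is identical.
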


\begin{proof}
Lemma~\ref{psring}(1) yields that $|V|\geq p$. Clearly, $V\neq G$ because otherwise $\mathcal{A}=\mathbb{Z}G$. So $p\leq |V| \leq |G|/p$. To prove the lemma, we need to consider the cases $|V|=|G|/p$ and $|V|=p$. If $|V|=|G|/p$, then $\mathcal{A}$ is decomposable by Lemma~\ref{psring}(3), a contradiction to Corollary~\ref{wrnonnorm}.. 

Let $|V|=p$. Since $|G|\geq p^2$, there is an $\mathcal{A}$-subgroup $U\geq V$ of order~$p^2$ by Lemma~\ref{psring}(2). As $|V|=p$, we obtain $\mathcal{A}_U\cong \mathbb{Z}C_p\wr \mathbb{Z}C_p$ by Lemma~\ref{psring}(6). This implies that there is $X\in \mathcal{S}(\mathcal{A})$ which is a $V$-coset. Therefore $\mathcal{A}$ is decomposable by Lemma~\ref{notsizep}, a contradiction to Corollary~\ref{wrnonnorm}.
\end{proof}

\begin{lemm}\label{orderm}
In the above notations, $M$ is abelian and $|M|=|G|/|V|$.
\end{lemm}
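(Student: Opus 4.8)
The plan is to read off both assertions from the structure of $K=G_rR$ as a semidirect product, using only the second isomorphism theorem together with Lemma~\ref{intersection}. The essential point is that $M=K_e$ is isomorphic to a quotient of the \emph{abelian} group $R$.

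First I would establish the semidirect decomposition $K=G_r\rtimes M$. Since $R\leq \Hol(G)=N_{\sym(G)}(G_r)$, the subgroup $R$ normalizes $G_r$, so $G_r\trianglelefteq G_rR=\langle G_r,R\rangle=K$; in particular $G_rR$ is genuinely a subgroup, which is needed for the isomorphism-theorem arguments below. Because $G_r\leq K$ is transitive on $G$, the standard factorization gives $K=G_rK_e=G_rM$, and since the only right translation fixing $e$ is the identity we have $G_r\cap M=1$. Hence $K=G_r\rtimes M$ and, in particular, $M\cong K/G_r$.

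Next I would deduce both claims. For abelianness, the second isomorphism theorem yields $K/G_r=G_rR/G_r\cong R/(R\cap G_r)$; as $R$ is $G$-regular and $G$ is abelian, $R$ is abelian, so its quotient $R/(R\cap G_r)$ is abelian, and therefore $M\cong K/G_r$ is abelian. For the order, I would invoke Lemma~\ref{intersection} (applicable since $K=\langle G_r,R\rangle$), which gives $R\cap G_r=V_r$; then
$$|M|=|K/G_r|=|R:R\cap G_r|=\frac{|R|}{|V_r|}=\frac{|G|}{|V|},$$
using $|R|=|G|$ and $|V_r|=|V|$.

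I do not anticipate a serious obstacle: the argument is almost entirely formal. The only point requiring care is confirming that $G_rR$ is a subgroup and that $G_r\cap M=1$, so that the semidirect decomposition and the isomorphism $M\cong R/(R\cap G_r)$ are legitimate; everything else is a direct application of Lemma~\ref{intersection} and the abelianness of $R$.
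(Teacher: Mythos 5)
Your proposal is correct and follows essentially the same route as the paper: the paper's proof also writes $K=G_r\rtimes M$ and computes $M\cong K/G_r=G_rR/G_r\cong R/(G_r\cap R)=R/V_r$, invoking Lemma~\ref{intersection} for the last equality and $R\cong G$ for the conclusion. You merely spell out the details (that $R$ normalizes $G_r$, that $G_r\cap M=1$, and that $R$ is abelian) which the paper leaves implicit.
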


\begin{proof}
One can see that $K=G_r\rtimes M$. So 
$$M\cong K/G_r=G_rR/G_r\cong R/(G_r\cap R)=R/V_r,$$
where the last equality follows from Lemma~\ref{intersection}. Together with $R\cong G$, this implies the required.
\end{proof}

\begin{lemm}\label{sizeset}
In the above notations, $p^2\leq |X| \leq |G|/|V|$ for every $X\in \mathcal{S}(\mathcal{A})_{G\setminus V}$.
\end{lemm}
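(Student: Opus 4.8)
The plan is to treat the two inequalities separately, deriving the upper bound $|X|\le |G|/|V|$ by a one-line orbit count and establishing the lower bound $|X|\ge p^2$ by excluding the single troublesome case $|X|=p$.

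For the upper bound I would use that $\mathcal{A}=\cyc(M,G)$, so every basic set is an orbit of $M$ on $G$. Hence for $X\in\mathcal{S}(\mathcal{A})$ and any $x\in X$ we have $|X|=|M:M_x|$, which divides $|M|$. By Lemma~\ref{orderm} we know $|M|=|G|/|V|$, and therefore $|X|\le |G|/|V|$. (Note this bound in fact holds for every basic set, not only for those lying outside $V$.) For the lower bound, first observe that since $\mathcal{A}$ is a $p$-$S$-ring the size $|X|$ is a power of $p$; and since $X\subseteq G\setminus V$ the set $X$ cannot be a singleton, so $|X|\ge p$. It then remains only to rule out $|X|=p$.

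Suppose, towards a contradiction, that $|X|=p$. Since $M$ is a $p$-group and $\mathcal{A}=\cyc(M,G)$, Lemma~\ref{sizep} applies to $X$ and produces a basic set $Y=Lx_0\in\mathcal{S}(\mathcal{A})$ that is a coset of a subgroup $L\le V$ of order $p$. Because every element of $V$ is thin, $L$ is an $\mathcal{A}$-subgroup with $\mathcal{A}_L=\mathbb{Z}L$; and our standing setup (with $K=G_rR$, $R\ne G_r$ a $G$-regular subgroup) is exactly that of Lemma~\ref{intersection}. Thus Lemma~\ref{notsizep}, applied to this $L$ and to $Lx_0\in\mathcal{S}(\mathcal{A})$, shows that $\mathcal{A}$ is a nontrivial $U/L$-wreath product for some proper $\mathcal{A}$-subgroup $U$, i.e.\ $\mathcal{A}$ is decomposable. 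As $G$ has odd order and $\mathcal{A}$ is normal, this contradicts Corollary~\ref{wrnonnorm}. Hence $|X|\ge p^2$, completing the proof.

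The upper bound is essentially immediate from the cyclotomic description of $\mathcal{A}$, so the only real work is in the lower bound. The crucial move is the passage from a basic set of size $p$ to a \emph{coset} basic set, which is precisely what Lemma~\ref{sizep} supplies, after which Lemma~\ref{notsizep} forces a nontrivial generalized wreath decomposition. I expect the one place demanding care is checking that the subgroup $L$ delivered by Lemma~\ref{sizep} really sits inside $V$ and has prime order, so that $\mathcal{A}_L=\mathbb{Z}L$ and all hypotheses of Lemma~\ref{notsizep} are genuinely met; with those verifications in hand, the clash with indecomposability (Corollary~\ref{wrnonnorm}) closes the argument.
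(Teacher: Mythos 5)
Your proposal is correct and follows essentially the same route as the paper: the upper bound via the orbit count $|X|\leq |M|=|G|/|V|$ (Lemma~\ref{orderm}), and the lower bound by excluding $|X|=p$ through Lemma~\ref{sizep}, Lemma~\ref{notsizep}, and Corollary~\ref{wrnonnorm}. The extra verifications you flag (that $L\leq V$ has prime order, so $\mathcal{A}_L=\mathbb{Z}L$ and the hypotheses of Lemma~\ref{notsizep} hold) are indeed the right points to check and go through exactly as you describe.
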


\begin{proof}
Since $\mathcal{A}=\cyc(M,G)$, the basic set $X$ is an orbit of $M$. So $|X|\leq |M|=|G|/|V|$, where the last equality follows from Lemma~\ref{orderm}. Clearly, $|X|\geq p$. If $|X|=p$, then there is a basic set of $\mathcal{A}$ which is a coset by a subgroup of $V$ of order~$p$ by Lemma~\ref{sizep}. Therefore $\mathcal{A}$ is decomposable by Lemma~\ref{notsizep}, a contradiction to Corollary~\ref{wrnonnorm}. Thus, $|X|\geq p^2$.  
\end{proof}

\begin{lemm}\label{contthin0}
In the above notations, $|\O_{\theta}(\mathcal{A}_U)|\geq p^2$ for every $\mathcal{A}$-subgroup $U$ of order at least~$p^2$.
\end{lemm}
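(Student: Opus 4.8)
The plan is to identify $\O_{\theta}(\mathcal{A}_U)$ with $V\cap U$ and then rule out, by contradiction, that this intersection has order exactly~$p$. Since $U$ is an $\mathcal{A}$-subgroup, the basic sets of $\mathcal{A}_U$ are precisely the basic sets of $\mathcal{A}$ lying inside $U$; in particular $\mathcal{A}_U$ is again a $p$-$S$-ring and $\O_{\theta}(\mathcal{A}_U)=\{x\in U:\{x\}\in\mathcal{S}(\mathcal{A})\}=V\cap U$. Applying Lemma~\ref{psring}(1) to $\mathcal{A}_U$ (which has order at least~$p^2$) already gives $|V\cap U|\geq p$, so it only remains to exclude the case $|V\cap U|=p$.

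First I would assume $|V\cap U|=p$ and, using Lemma~\ref{psring}(2) for $\mathcal{A}_U$, choose an $\mathcal{A}$-subgroup $U'\leq U$ of order~$p^2$ (such a subgroup exists as a term of the index-$p$ chain and is an $\mathcal{A}$-subgroup because $U$ is). Then $\O_{\theta}(\mathcal{A}_{U'})=V\cap U'\subseteq V\cap U$, so this thin radical has order at most~$p$; being nontrivial by Lemma~\ref{psring}(1), it has order exactly~$p$. In particular $\mathcal{A}_{U'}\neq\mathbb{Z}U'$, whence Lemma~\ref{psring}(6) forces $\mathcal{A}_{U'}\cong\mathbb{Z}C_p\wr\mathbb{Z}C_p$. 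From the explicit structure of this wreath product, its thin radical $L:=V\cap U'$ is a subgroup of~$V$ of order~$p$, and every basic set of $\mathcal{A}_{U'}$ outside $L$ is an $L$-coset; in particular some basic set $X$ of $\mathcal{A}_{U'}$, and hence of $\mathcal{A}$, is an $L$-coset.

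At this stage the hypotheses of Lemma~\ref{notsizep} are in place: the Theorem~\ref{main4} setting is exactly the configuration of Lemma~\ref{intersection} (with $K=G_rR=\langle G_r,R\rangle$), the subgroup $L$ has prime order~$p$ and satisfies $\mathcal{A}_L=\mathbb{Z}L$ since $L\leq V$, and $X=Lx\in\mathcal{S}(\mathcal{A})$. Lemma~\ref{notsizep} then yields that $\mathcal{A}$ is a nontrivial $\widetilde{U}/L$-wreath product, i.e. $\mathcal{A}$ is decomposable. This contradicts Corollary~\ref{wrnonnorm}, because $\mathcal{A}$ is a normal $S$-ring over the abelian group $G$ of odd order~$p^n$. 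Hence $|V\cap U|=|\O_{\theta}(\mathcal{A}_U)|\geq p^2$.

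The argument is largely mechanical once the earlier lemmas are granted, and I expect no genuinely hard step. The only point requiring care is the bookkeeping that the order-$p$ subgroup appearing as the base of the wreath product $\mathbb{Z}C_p\wr\mathbb{Z}C_p$ coincides with $V\cap U'$ (rather than some unrelated subgroup of order~$p$); this is secured by computing $\O_{\theta}(\mathcal{A}_{U'})=V\cap U'$ and using that the thin radical of $\mathbb{Z}C_p\wr\mathbb{Z}C_p$ is exactly its base group, so that the coset basic set $X$ produced is indeed a coset of a subgroup contained in~$V$, as Lemma~\ref{notsizep} requires.
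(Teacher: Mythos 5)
Your proof is correct and follows essentially the same route as the paper: reduce to an $\mathcal{A}$-subgroup of order $p^2$ via Lemma~\ref{psring}(1),(2), force $\mathbb{Z}C_p\wr\mathbb{Z}C_p$ there by Lemma~\ref{psring}(6), produce a basic set that is a coset of an order-$p$ subgroup of $V$, and derive decomposability from Lemma~\ref{notsizep} contradicting Corollary~\ref{wrnonnorm}. Your extra care in identifying the base of the wreath product with $V\cap U'$ is a point the paper leaves implicit, but the argument is the same.
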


\begin{proof}
Note that $|\O_{\theta}(\mathcal{A}_U)|\geq p$ by Lemma~\ref{psring}(1). Assume that $|\O_{\theta}(\mathcal{A}_U)|=p$. Due to Lemma~\ref{psring}(2), $U$ has an $\mathcal{A}_U$-subgroup $L$ of order~$p^2$. Since $|\O_{\theta}(\mathcal{A}_U)|=p$, we conclude that $|\O_{\theta}(\mathcal{A}_L)|=p$. So $\mathcal{A}_L \cong \mathbb{Z}C_p\wr \mathbb{Z}C_p$ and hence $\mathcal{A}$ has a basic set which is a coset by a subgroup of $V$ of order~$p$. Therefore $\mathcal{A}$ is decomposable by Lemma~\ref{notsizep}, a contradiction to Corollary~\ref{wrnonnorm}.
\end{proof}

\begin{lemm}\label{contthin}
In the above notations, let $|V|=p^2$ and $X\in \mathcal{S}(\mathcal{A})_{G\setminus V}$ such that $|X|=|G|/p^2$. Then $\rad(X)\geq V$. 
\end{lemm}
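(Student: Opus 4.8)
The plan is to set $U=\langle X\rangle$ and to run Lemma~\ref{interrad}, first in the case $U=G$ and then, separately, in the index-$p$ case. Since $e\notin X$ while $|X|=|G|/p^2$, the $\mathcal{A}$-subgroup $U$ satisfies $|U|>|X|$, and as $|U|$ is a power of $p$ dividing $|G|$ this forces $|U|\in\{|G|/p,\,|G|\}$; in particular $|G:U|\le p$, so by Lemma~\ref{faith} the set $X$ is a faithful regular orbit of $M$. Throughout I will use that $V=\O_{\theta}(\mathcal{A})=\Fix(M)$ (singleton basic sets of $\cyc(M,G)$ are exactly the fixed points of $M$) and that $\mathcal{A}$ is indecomposable by Corollary~\ref{wrnonnorm}.

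Suppose first that $U=G$. Then Lemma~\ref{interrad} applies to $X$ and gives $|\rad(X)\cap V|\ge p|V||X|/|G|=p$, the last equality holding because $|V|=p^2$ and $|X|=|G|/p^2$. If equality were attained, then, as $p|V||X|/|G|=p>1$, the ``moreover'' part of Lemma~\ref{interrad} would exhibit $\mathcal{A}$ as a nontrivial $H/(\rad(X)\cap V)$-wreath product, contradicting indecomposability. Hence $|\rad(X)\cap V|>p$, and since $\rad(X)\cap V$ is a subgroup of the group $V$ of order $p^2$, we get $\rad(X)\cap V=V$, i.e. $\rad(X)\ge V$.

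Now suppose $|U|=|G|/p$. Here $X$ generates $U$, $|X|=|U|/p$, and $\O_{\theta}(\mathcal{A}_U)=V\cap U$, so applying Lemma~\ref{interrad} inside $\mathcal{A}_U$ yields $|\rad(X)\cap(V\cap U)|\ge p|V\cap U|\,|X|/|U|=|V\cap U|$; thus equality is attained, $V\cap U\le\rad(X)$, and, since $|V\cap U|\ge p>1$, the ``moreover'' clause makes $\mathcal{A}_U$ a nontrivial $H'/(V\cap U)$-wreath product with $H'$ an $\mathcal{A}_U$-subgroup of index $p$ in $U$ (so in particular $V\cap U\le H'$). It therefore suffices to prove $V\le U$, for then $V=V\cap U\le\rad(X)$.

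The heart of the matter, which I expect to be the main obstacle, is ruling out $V\nleq U$. Assume $v_0\in V\setminus U$; then $\overline{v_0}$ generates $G/U\cong C_p$, so the $U$-cosets are exactly $U,Uv_0,\dots,Uv_0^{p-1}$. Because $v_0\in\Fix(M)$, left multiplication by $v_0$ commutes with the action of $M$, carries basic sets to basic sets, and cyclically permutes the $U$-cosets; consequently every basic set of $\mathcal{A}$ is of the form $Zv_0^i$ with $Z\in\mathcal{S}(\mathcal{A}_U)$ and $0\le i\le p-1$, and $\rad(Zv_0^i)=\rad(Z)$. By the wreath decomposition of $\mathcal{A}_U$, any $Z\in\mathcal{S}(\mathcal{A}_U)$ with $Z\nsubseteq H'$ satisfies $(V\cap U)\le\rad(Z)$. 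Put $B=\langle H',v_0\rangle$; then any basic set of $\mathcal{A}$ lying outside $B$ must be a translate $Zv_0^i$ with $Z\nsubseteq H'$, and hence has $(V\cap U)\le\rad$. Moreover $B$ is proper: as $G/U\cong C_p$ we have $v_0^p\in U$, so $v_0^p\in V\cap U\le H'$ and $\overline{v_0}$ has order $p$ in $G/H'$, whence $|B|=p|H'|=|U|<|G|$. Thus $\mathcal{A}$ is a nontrivial $B/(V\cap U)$-wreath product, contradicting Corollary~\ref{wrnonnorm}. Therefore $V\le U$, and $\rad(X)\ge V$, as required.
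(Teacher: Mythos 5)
Your proof is correct, and in the case $U=\langle X\rangle=G$ it coincides with the paper's argument (first part of Lemma~\ref{interrad} gives $|\rad(X)\cap V|\geq p$, and equality is excluded by the ``moreover'' clause together with Corollary~\ref{wrnonnorm}). The divergence is in the case $|U|=|G|/p$. The paper avoids the case split entirely: since $|U|>|X|=|G|/p^2\geq p^2$, Lemma~\ref{contthin0} gives $|\O_\theta(\mathcal{A}_U)|\geq p^2=|V|$, i.e. $V\leq U$ at once, and then a single application of Lemma~\ref{interrad} to $\mathcal{A}_U$ yields $|\rad(X)\cap V|\geq p|V||X|/|U|=p|G|/|U|\in\{p,p^2\}$, with the value $p$ excluded as before. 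You instead handle $V\nleq U$ by hand: picking $v_0\in V\setminus U$, observing that every basic set is a $v_0$-translate of a basic set of $\mathcal{A}_U$ (legitimate, since $\{v_0\}\in\mathcal{S}(\mathcal{A})$ and $\mathcal{A}_{G/U}=\mathbb{Z}(G/U)$ forces each basic set into a single $U$-coset), and assembling a nontrivial $\langle H',v_0\rangle/(V\cap U)$-wreath decomposition of $\mathcal{A}$ that contradicts Corollary~\ref{wrnonnorm}. This is a valid and essentially self-contained substitute, but it re-derives in a special case what Lemma~\ref{contthin0} — proved immediately beforehand for exactly this purpose — already gives you for free; invoking it would collapse your second case to one line and remove the need for the translate/wreath construction. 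A minor remark: the appeal to Lemma~\ref{faith} in your opening paragraph is never used afterwards and can be deleted.
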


\begin{proof}
Let $U=\langle X \rangle$. Due to Lemma~\ref{contthin0}, we have $U\geq V$. Together with the first part of Lemma~\ref{interrad} applied to $\mathcal{A}_U$ and inequality $|U|\geq p|G|/|V|$, this implies that
$$|\rad(X)\cap V|\geq p|X||V|/|U|=p|G|/|U|\in \{p,p^2\}.$$
If $|\rad(X)\cap V|=p$, then $\mathcal{A}$ is decomposable by the second part of Lemma~\ref{interrad}, a contradiction to Corollary~\ref{wrnonnorm}. Thus, $|\rad(X)\cap V|=p^2$, i.e. $\rad(X)\cap V=V$ as required.  
\end{proof}

Let us return to the proof of Theorem~\ref{main4}. If $G$ is elementary abelian and $n\leq 5$, then $G$ is $\CI^{(2)}$ by~\cite{FK} and hence $\NCI^{(2)}$, a contradiction to the assumption that $G$ is non-$\NCI^{(2)}$. So $G$ is non-elementary abelian. If $n\leq 3$, then $G$ is $\NCI^{(2)}$ by Theorem~\ref{main3}, again we obtain a contradiction to the assumption. Therefore $n=4$ or $n=5$.

Let $n=4$. In this case, $|V|=p^2$ by Lemma~\ref{orderthin} and $|X|=p^2$ for every $X\in \mathcal{S}(\mathcal{A})_{G\setminus V}$ by Lemma~\ref{sizeset}. Therefore every basic set from $\mathcal{S}(\mathcal{A})_{G\setminus V}$ is a $V$-coset by Lemma~\ref{contthin}. Thus, $\mathcal{A}=\mathcal{A}_V\wr \mathcal{A}_{G/V}$, a contradiction to Corollary~\ref{wrnonnorm}.

Let $n=5$. Then $|V|=p^2$ or $|V|=p^3$ by Lemma~\ref{orderthin}. Suppose that 
$$|V|=p^2.$$ 
In this case, $|M|=p^3$ by Lemma~\ref{orderm} and $|X|\in \{p^2,p^3\}$ for every $X\in \mathcal{S}(\mathcal{A})_{G\setminus V}$ by Lemma~\ref{sizeset}. Lemma~\ref{contthin} implies that $\rad(X)\geq V$ for every $X\in \mathcal{S}(\mathcal{A})$ of size~$p^3$. If this also holds for every $X\in \mathcal{S}(\mathcal{A})$ of size~$p^2$, then $\mathcal{A}=\mathcal{A}_V\wr \mathcal{A}_{G/V}$, a contradiction to Corollary~\ref{wrnonnorm}. Therefore there is $X\in \mathcal{S}(\mathcal{A})$ of size~$p^2$ such that 
$$\rad(X)\ngeq V.$$

By Lemma~\ref{contthin0}, we have $\langle X \rangle \geq V$. If $|\langle X \rangle|=p^3$, then $X$ is a $V$-coset by Lemma~\ref{psring}(4), a contradiction to $\rad(X)\ngeq V$. Therefore $|\langle X \rangle|\geq p^4$. Lemma~\ref{faith} yields that $X$ is a regular faithful orbit of $M$. Thus, $|X|=|M|=p^3$, a contradiction to $|X|=p^2$.

Now suppose that 
$$|V|=p^3.$$

\begin{lemm}\label{p2coset}
In the above notations, let $X\in \mathcal{S}(\mathcal{A})_{G\setminus V}$. Then $X$ is a coset by a subgroup of $V$ of order~$p^2$.
\end{lemm}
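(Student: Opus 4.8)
The plan is to reduce the statement to the single assertion that $\mathcal{A}_{G/V}=\mathbb{Z}(G/V)$, i.e.\ that every basic set outside $V$ lies in one coset of $V$, and then to read off the coset structure from the thin radical via Lemma~\ref{thincoset}.

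First I would record the size data and the resulting rigidity. Since $|M|=p^2$ by Lemma~\ref{orderm} and $|G|/|V|=p^2$, Lemma~\ref{sizeset} forces $|X|=p^2$ for every $X\in\mathcal{S}(\mathcal{A})_{G\setminus V}$; as $\mathcal{A}=\cyc(M,G)$, such an $X$ is an orbit of $M$ of size $|M|$, hence a regular orbit. In particular every nonidentity element of $M$ fixes exactly $V$: if $\rho\neq 1$ fixed some $g\notin V=\Fix(M)$, then the $M$-orbit of $g$ would be non-singleton of size $p^2$, so $M$ would act freely on it, contradicting $\rho\in M_g$. Next I would observe that it suffices to prove $X\subseteq Vx$ for some $x\in X$: then $Xx^{-1}\subseteq V$, and by Lemma~\ref{thincoset} the set $Xx^{-1}=Xx^{-1}\cap V$ is a subgroup of $V$; since $|Xx^{-1}|=|X|=p^2$, it has order $p^2$ and $X=(Xx^{-1})x$ is the required coset. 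Equivalently, the lemma amounts to $\mathcal{A}_{G/V}=\mathbb{Z}(G/V)$.

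Writing $U=\langle X\rangle$, Lemma~\ref{contthin0} gives $|U\cap V|\geq p^2$, and $X\not\subseteq V$ forces $|U|\geq p^3$. I would then dispose of the cases in which $\rad(X)\cap V$ can be controlled directly. If $U=G$, Lemma~\ref{interrad} yields $|\rad(X)\cap V|\geq p$; the equality clause would make $\mathcal{A}$ a nontrivial wreath product, impossible by Corollary~\ref{wrnonnorm}, so $|\rad(X)\cap V|\geq p^2$. As $\rad(X)\cap V=Xx^{-1}\cap V$ has order at most $|X|=p^2$, it equals $p^2$, whence $X\cap Vx=(\rad(X)\cap V)x$ already exhausts $X$ and $X\subseteq Vx$. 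If $|U|=p^3$, Lemma~\ref{psring}(4) applied to $\mathcal{A}_U$ writes it as $(\mathcal{A}_U)_H\wr\mathbb{Z}(U/H)$ with $|H|=p^2$; since $\O_\theta(\mathcal{A}_U)=U\cap V$ has order $\geq p^2$, one gets $H=U\cap V\leq V$, and $X$ is an $H$-coset. If $|U|=p^4$ and $V\leq U$, Lemma~\ref{interrad} on $\mathcal{A}_U$ gives $|\rad(X)\cap V|\geq p\,|V|\,|X|/|U|=p^2$ outright, and the same computation as in the case $U=G$ produces the coset.

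The residual case $|U|=p^4$ with $V\not\leq U$ (so $|U\cap V|=p^2$ and $G=UV$) is where I expect the real work to lie, and it is the main obstacle. Here Lemma~\ref{interrad} on $\mathcal{A}_U$ only delivers $|\rad(X)\cap V|\geq p$, and its equality clause decomposes $\mathcal{A}_U$ rather than $\mathcal{A}$, so Corollary~\ref{wrnonnorm} does not apply at once. The plan is to exploit the rigidity established above: since $M$ acts faithfully on $U$ and every nonidentity element of $M^U$ fixes exactly $U\cap V$, the restriction $\mathcal{A}_U=\cyc(M^U,U)$ is a $p$-$S$-ring over a group of order $p^4$ whose thin radical has order $p^2$ and whose remaining basic sets all have size $p^2$ — exactly the configuration handled in the case $n=4$. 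I would therefore try to show that $X$ must be a $(U\cap V)$-coset, either by proving that $\mathcal{A}_U$ inherits indecomposability (so that the equality clause of Lemma~\ref{interrad} is again excluded and $|\rad(X)\cap V|\geq p^2$), or by passing through a rational conjugate or a basic set generating all of $G$ in order to transport the problem back to the already-settled case $U=G$. Establishing the indecomposability of the restriction $\mathcal{A}_U$ is the crux.
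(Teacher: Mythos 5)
Your reduction to showing $X\subseteq Vx$, and your treatment of the cases $U=G$, $|U|=p^3$, and $|U|=p^4$ with $V\leq U$, are correct and essentially coincide with the paper's argument (the paper groups the latter two as the case $|U:\O_\theta(\mathcal{A}_U)|=p$ and invokes Lemma~\ref{psring}(3), while for $U=G$ it likewise uses both parts of Lemma~\ref{interrad} together with Corollary~\ref{wrnonnorm}). However, the case you yourself flag as ``the crux'' --- $|U|=p^4$ with $|U\cap V|=p^2$ and $|\rad(X)\cap V|=p$ --- is left as a plan rather than a proof, so there is a genuine gap exactly there. Moreover, the strategy you propose for it (establish that $\mathcal{A}_U$ is indecomposable) points in an unpromising direction: $\mathcal{A}_U$ need not be normal, so Corollary~\ref{wrnonnorm} gives no direct handle on it, and the paper's argument in fact \emph{uses} a decomposition of $\mathcal{A}_U$ rather than excluding one.

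The missing step is this. In the residual case $\O_\theta(\mathcal{A}_U)=U\cap V$ has order $p^2$, so the equality clause (second part) of Lemma~\ref{interrad} applied to $\mathcal{A}_U$ shows that $\mathcal{A}_U$ is the nontrivial $W/L$-wreath product, where $L=\rad(X)\cap V$ has order $p$ and $W$ is an $\mathcal{A}_U$-subgroup of index $p$ in $U$; note that $U\cap V=\O_\theta(\mathcal{A}_U)\leq W$, since every basic set of $\mathcal{A}_U$ outside $W$ contains an $L$-coset. Because $|U\cap V|=p^2<|V|=p^3$, one has $G=UV$ and $|WV|=p^4$, so $WV$ is a proper $\mathcal{A}$-subgroup of $G$ containing $L$. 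The $W/L$-decomposition of $\mathcal{A}_U$ then extends to $G$: $\mathcal{A}$ is the nontrivial $(WV)/L$-wreath product, which contradicts Corollary~\ref{wrnonnorm} applied to $\mathcal{A}$ itself (which \emph{is} normal over a group of odd order). Hence $|\rad(X)\cap V|=p$ is impossible, so $|\rad(X)\cap V|=p^2$ and $X$ is a coset exactly as in your other cases. In short: do not try to show that $\mathcal{A}_U$ inherits indecomposability; show instead that decomposability of $\mathcal{A}_U$ over a section with $L\leq V$ propagates to decomposability of $\mathcal{A}$.
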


\begin{proof}
Lemma~\ref{sizeset} implies that $|X|=p^2$. Let $U=\langle X \rangle$. If $|U:\O_{\theta}(\mathcal{A}_U)|=p$, then we are done by Lemma~\ref{psring}(3). Further, we assume that $|U:\O_{\theta}(\mathcal{A}_U)|\geq p^2$. Together with Lemma~\ref{contthin0}, this implies that 
$$|U|\geq p^4,$$ 
$|\O_{\theta}(\mathcal{A}_U)|=p^2$ whenever $|U|=p^4$. Clearly, if $U=G$, then $|\O_{\theta}(\mathcal{A}_U)|=|V|=p^3$. So $|U:\O_{\theta}(\mathcal{A}_U)|=p^2$ in the both cases. From the first part of Lemma~\ref{interrad} applied to $\mathcal{A}_U$ it follows that $|\rad(X)\cap V|\geq p$. We are done if $|\rad(X)\cap V|=p^2$. Thus, we may assume that
$$|\rad(X)\cap V|=p.$$

If $|U|=p^5$, i.e $U=G$, then $\mathcal{A}$ is decomposable by the second part of Lemma~\ref{interrad}, a contradiction to Corollary~\ref{wrnonnorm}. Suppose that $|U|=p^4$. Then $|\O_{\theta}(\mathcal{A}_U)|=p^2$. The second part of Lemma~\ref{interrad} applied to $\mathcal{A}_U$ yields that $\mathcal{A}_U$ is the nontrivial $W/L$-wreath product, where $L=\rad(X)\cap V$ and $W$ is an $\mathcal{A}_U$-subgroup of index~$p$. Since $|V|=p^3$ and $|V\cap U|=|\O_{\theta}(\mathcal{A}_U)|=p^2$, we conclude that $G=UV$. Therefore $\mathcal{A}$ is the nontrivial $(WV)/L$-wreath product, a contradiction to Corollary~\ref{wrnonnorm}.  
\end{proof}

\begin{lemm}\label{stabautp2}
In the above notations, $|\aut(\mathcal{A})|=p^7$.
\end{lemm}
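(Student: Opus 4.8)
The plan is to compute $\aut(\mathcal{A})$ through its point stabilizer. By Lemma~\ref{orderm} we have $K=G_r\rtimes M$ with $|M|=|G|/|V|=p^2$, so $|K|=p^5\cdot p^2=p^7$; since $\mathcal{A}$ is schurian, $\aut(\mathcal{A})=K^{(2)}\geq K$, and the normality of $\mathcal{A}$ gives $\aut(\mathcal{A})=G_r\rtimes H$, where $H=\aut(\mathcal{A})_e\leq \aut(G)$ contains $M=K_e$. Hence $|\aut(\mathcal{A})|=p^5|H|$ and it suffices to prove $|H|=p^2$, i.e. $H=M$. First I would record two structural facts about $H$: it fixes $V$ pointwise (the singleton basic sets are exactly the elements of $V=\O_{\theta}(\mathcal{A})$, and basic sets are $H$-orbits), and it acts trivially on $G/V$ (by Lemma~\ref{p2coset} every basic set outside $V$ is a coset $xW$ with $W\leq V$, hence lies inside $xV$). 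Consequently $\phi\mapsto(g\mapsto g^{-1}g^{\phi})$ is an injective homomorphism of $H$ into the group of homomorphisms $G/V\to V$; for a representative $z$ of $\bar z\in G/V$ set $\psi_\phi(\bar z)=z^{-1}z^{\phi}\in V$, so that the orbit of $z$ equals $z\,W_{\bar z}$ with $W_{\bar z}=\{\psi_\phi(\bar z):\phi\in H\}$. For $\bar z\neq e$ this orbit is a basic set $X_{\bar z}$, so $W_{\bar z}=\rad(X_{\bar z})$ has order $p^2$ by Lemma~\ref{p2coset}.

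If $G/V$ is cyclic, generated by some $\bar t$, then a homomorphism $G/V\to V$ is determined by its value at $\bar t$, so the map $\phi\mapsto\psi_\phi(\bar t)$ is injective on $H$ and $|H|=|W_{\bar t}|=p^2$, which settles this case. The main obstacle is the remaining case $G/V\cong C_p\times C_p$, and here I would argue by contradiction, assuming $|H|>p^2$. Fixing a basis $\bar x,\bar y$ of $G/V$, the evaluation $\phi\mapsto\psi_\phi(\bar x)$ maps $H$ onto $W_{\bar x}$ of order $p^2$, so its kernel is nontrivial; thus there is $\phi\in H$, $\phi\neq\id$, with $\psi_\phi(\bar x)=e$, and then necessarily $v:=\psi_\phi(\bar y)\neq e$ (otherwise $\psi_\phi\equiv e$ and $\phi=\id$). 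For every $\bar z=a\bar x+b\bar y$ with $b\neq0$ we get $\psi_\phi(\bar z)=v^{b}\neq e$, whence $v\in W_{\bar z}=\rad(X_{\bar z})$. Therefore $L=\langle v\rangle$, which is an $\mathcal{A}$-subgroup of order $p$ since $v\in V=\O_{\theta}(\mathcal{A})$, satisfies $L\leq\rad(Y)$ for every basic set $Y$ lying outside the proper subgroup $U$ (of index $p$, order $p^4$) that is the preimage of $\langle\bar x\rangle$ in $G$, because any such $Y$ sits inside a single coset $\bar z\notin\langle\bar x\rangle$ by Lemma~\ref{p2coset}.

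It then follows that $\langle Y\in\mathcal{S}(\mathcal{A}):L\nleq\rad(Y)\rangle\leq U<G$, so, exactly as in the proof of Lemma~\ref{notsizep}, $\mathcal{A}$ is a nontrivial generalized wreath product with lower section $L$, contradicting Corollary~\ref{wrnonnorm}. Hence $|H|=p^2$ in this case as well, and in all cases $|\aut(\mathcal{A})|=p^5\cdot p^2=p^7$. The crux, and the step I expect to require the most care, is precisely this elementary-abelian-quotient case: turning the mere inequality $|H|>p^2$ into a single element $v$ of $V$ that lies in the radical of \emph{every} basic set outside an index-$p$ subgroup, thereby forcing decomposability; the earlier cyclic case and the reduction to $H$ are routine by comparison.
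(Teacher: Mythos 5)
Your proof is correct, but it takes a genuinely different route from the paper's. The paper also reduces to showing $\aut(\mathcal{A})_e=M$, but its contradiction argument goes through the \emph{two-point} stabilizer: if $|\aut(\mathcal{A})_e|\geq p^3$, then for $x$ in a basic set $X\subseteq G\setminus V$ (of size $p^2$ by Lemma~\ref{p2coset}) the stabilizer $\aut(\mathcal{A})_{e,x}$ is nontrivial; the cyclotomic $S$-ring $\mathcal{B}=\cyc(\aut(\mathcal{A})_{e,x},G)$ then satisfies $\mathcal{B}\geq\mathcal{A}$, $\mathcal{B}\neq\mathbb{Z}G$ and $|\O_\theta(\mathcal{B})|=|\langle V,x\rangle|=p^4$, so Lemma~\ref{psring}(3) forces $\mathcal{B}$ (and hence $\mathcal{A}$) to be a nontrivial $\langle V,x\rangle/L$-wreath product, contradicting Corollary~\ref{wrnonnorm}. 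You instead exploit that $H=\aut(\mathcal{A})_e$ fixes $V$ pointwise and stabilizes every $V$-coset, embed $H$ into $\mathrm{Hom}(G/V,V)$ via $\phi\mapsto(\bar z\mapsto z^{-1}z^\phi)$, and read off $|H|$ from the radicals $W_{\bar z}=\rad(X_{\bar z})$ of order $p^2$; the cyclic case of $G/V$ is immediate, and in the case $G/V\cong C_p^2$ you extract from a nontrivial element of the kernel of evaluation at $\bar x$ a single order-$p$ element $v$ lying in the radical of every basic set outside an index-$p$ subgroup, again forcing decomposability. Both arguments terminate in the same contradiction with Corollary~\ref{wrnonnorm}, but the paper's is shorter and avoids the case split on the isomorphism type of $G/V$ (it only uses that $|X|=p^2$ while $|\aut(\mathcal{A})_e|\geq p^3$), whereas yours is more elementary and self-contained: it bypasses the auxiliary $S$-ring $\mathcal{B}$ and Lemma~\ref{psring}(3) entirely, and makes explicit where the bound $|H|=p^2$ comes from, namely the identification of $H$ with a subgroup of $\mathrm{Hom}(G/V,V)$ constrained by the radicals from Lemma~\ref{p2coset}. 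All the steps you flag as delicate (well-definedness of $\psi_\phi$, $W_{\bar z}=\rad(X_{\bar z})$, $v\in W_{\bar z}$ for all $\bar z\notin\langle\bar x\rangle$, and the passage to a nontrivial generalized wreath product as in Lemma~\ref{notsizep}) do check out.
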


\begin{proof}
Clearly, $M\leq \aut(\mathcal{A})_e$. Lemma~\ref{orderm} implies that $|M|=p^2$. We are done if $\aut(\mathcal{A})_e=M$. Assume that $\aut(\mathcal{A})_e>M$, i.e. $|\aut(\mathcal{A})_e|\geq p^3$. Let $X \in \mathcal{S}(\mathcal{A})_{G\setminus V}$. Then $|X|=p^2$ by Lemma~\ref{p2coset}. One can see that given $x\in X$, the stabilizer $\aut(\mathcal{A})_{e,x}$ is nontrivial because $|\aut(\mathcal{A})_e|\geq p^3$ and $|X|=p^2$. Since $\mathcal{A}$ is normal, $\aut(\mathcal{A})_{e,x}\leq \aut(G)$. Put $\mathcal{B}=\cyc(\aut(\mathcal{A})_{e,x},G)$. On the one hand, $\mathcal{B}\neq \mathbb{Z}G$ because $\aut(\mathcal{A})_{e,x}$ is nontrivial. On the other hand, 
$$|\O_{\theta}(\mathcal{B})|\geq |\langle V,x\rangle|\geq p^4.$$
Therefore $|\O_{\theta}(\mathcal{B})|=p^4$. Thus, $\mathcal{B}$ is the nontrivial $(\langle V,x\rangle)/L$-wreath product for some $L\leq \langle V,x\rangle$ by Lemma~\ref{psring}(3). 

Every basic set from $\mathcal{S}(\mathcal{B})_{G\setminus V}$ is contained in a $V$-coset by Lemma~\ref{p2coset}. Since $\mathcal{B}\geq \mathcal{A}$, every basic set from $\mathcal{S}(\mathcal{B})_{G\setminus \langle V,x\rangle}$ so is and hence $L\leq V$. Observe that $\langle V,x\rangle=\langle V,X\rangle$ by Lemma~\ref{p2coset}. Therefore  $\langle V,x\rangle$ is an $\mathcal{A}$-subgroup as well as $\mathcal{B}$-subgroup. Thus, $\mathcal{A}$ is the $(\langle V,x\rangle)/L$-wreath product, a contradiction to Corollary~\ref{wrnonnorm}.
\end{proof}

If $V\cong C_{p^3}$ or $V\cong C_{p^2}\times C_p$, then every subgroup of $V$ of order~$p^2$ contains the characteristic subgroup~$L$ of $V$ of order~$p$. Together with Lemma~\ref{p2coset}, this implies that $\mathcal{A}$ is the $V/L$-wreath product, a contradiction to Corollary~\ref{wrnonnorm}. Therefore
$$V\cong C_p^3.$$

Assume that $G/V\cong C_{p^2}$. Let $X\in \mathcal{S}(\mathcal{A})_{G\setminus V}$ such that $X/V$ contains a generator of $G/V$. Due to Lemma~\ref{p2coset}, we have $X=Lx$ for some $x\in X$, where $L=\rad(X)\leq V$ is of order~$p^2$. One can see that $G/L=\langle V/L, X/L\rangle$. So $\mathcal{A}_{G/L}=\mathbb{Z}(G/L)$ by Lemma~\ref{groupring}. Together with Lemma~\ref{p2coset}, this yields that every basic set from $\mathcal{S}(\mathcal{A})_{G\setminus V}$ is an $L$-coset. Therefore $\mathcal{A}$ is the $V/L$-wreath product, a contradiction to Corollary~\ref{wrnonnorm}. Thus,
$$G/V\cong C_p^2.$$

Due to the above equality, one can choose $x,y\in G\setminus V$ such that 
$$G/V=\langle Vx \rangle \times \langle Vy \rangle.$$
Every element $g\in G$ can be uniquely presented in the form $g=x^iy^jv$, where $i,j\in \{0,\ldots,p-1\}$ and $v\in V$. Given $i,j\in \{0,\ldots,p-1\}$ such that $(i,j)\neq (0,0)$, the basic set of $\mathcal{A}$ containing $x^iy^j$ is denoted by $X_{ij}$. Lemma~\ref{p2coset} implies that $X_{ij}=L_{ij}x^iy^j$ for some $L_{ij}\leq V$ of order~$p^2$. Clearly, $\rad(X_{ij}v)=L_{ij}$ for every $v\in V$.

\begin{lemm}\label{p2coset1}
In the above notations, let $i,j,i^\prime,j^\prime\in \{0,\ldots,p-1\}$ such that $(i,j)\neq (0,0)$ and $(i^\prime,j^\prime)\neq (0,0)$. Then $L_{ij}=L_{i^\prime j^\prime}$ if and only if $\langle V,x^iy^j\rangle=\langle V,x^{i^\prime}y^{j^\prime}\rangle$.
\end{lemm}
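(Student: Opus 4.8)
The plan is to recast the statement in terms of the quotient $G/V\cong C_p^2$. For $(i,j)\neq(0,0)$ the subgroup $\langle V,x^iy^j\rangle$ is the full preimage of the line $\langle Vx^iy^j\rangle$ of $G/V$, so $\langle V,x^iy^j\rangle=\langle V,x^{i'}y^{j'}\rangle$ holds precisely when $(i,j)$ and $(i',j')$ span the same one-dimensional subspace of $G/V$, i.e. when $(i',j')$ is a nonzero scalar multiple of $(i,j)$ modulo $p$. Thus the lemma asserts that $L_{ij}$ depends only on, and injectively on, the line of $G/V$ through $(i,j)$.

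First I would record a structural fact. By Lemmas~\ref{orderm} and~\ref{p2coset} every $X\in\mathcal{S}(\mathcal{A})_{G\setminus V}$ has size $p^2=|M|$ and hence is a regular orbit of $M$ contained in a single $V$-coset. Since the nonzero $V$-cosets are covered by such orbits, $M$ stabilises every $V$-coset setwise; as $\mathcal{A}=\cyc(M,G)$, it follows that every subgroup of $G$ containing $V$ is $M$-invariant and therefore an $\mathcal{A}$-subgroup. In particular, for each line $\ell$ of $G/V$ the preimage $H_\ell=\langle V,x^iy^j\rangle$ (with $Vx^iy^j\in\ell$) is an $\mathcal{A}$-subgroup of order $p^4$.

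For the ``if'' direction, suppose $\langle V,x^iy^j\rangle=\langle V,x^{i'}y^{j'}\rangle=H_\ell$. The thin radical of $\mathcal{A}_{H_\ell}$ equals $V$: it contains $V$ and is proper, since $x^iy^j$ lies in a basic set of size $p^2$. Thus $|H_\ell:\O_\theta(\mathcal{A}_{H_\ell})|=p$ and Lemma~\ref{psring}(3) gives $\mathcal{A}_{H_\ell}=\mathbb{Z}V\wr_{V/L}\mathbb{Z}(H_\ell/L)$ for some $L\leq V$, in which every basic set lying in $H_\ell\setminus V$ is a single coset of the same subgroup $L$; comparing sizes forces $|L|=p^2$. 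As both $x^iy^j$ and $x^{i'}y^{j'}$ lie in $H_\ell\setminus V$, I conclude $L_{ij}=L=L_{i'j'}$. For the ``only if'' direction I argue by contradiction: assume $L_{ij}=L_{i'j'}=L$ while $(i,j)$ and $(i',j')$ span distinct lines, so $\langle V,x^iy^j,x^{i'}y^{j'}\rangle=G$. In $G/L$ the cosets $X_{ij}=Lx^iy^j$ and $X_{i'j'}=Lx^{i'}y^{j'}$ project to thin elements, and together with $V/L$ they generate $G/L$, so Lemma~\ref{groupring} yields $\mathcal{A}_{G/L}=\mathbb{Z}(G/L)$. Then every basic set of $\mathcal{A}$ outside $V$, having size $p^2=|L|$, is a full $L$-coset, whence $L\leq\rad(X)$ for all such $X$ and $\mathcal{A}=\mathcal{A}_V\wr_{V/L}\mathcal{A}_{G/L}$ is a nontrivial wreath product, contradicting Corollary~\ref{wrnonnorm}.

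The step I expect to be the crux is the structural fact that each $H_\ell$ is an $\mathcal{A}$-subgroup, which reduces to showing that $M$ fixes every $V$-coset setwise; this uses in an essential way that outside $V$ every basic set is a regular $M$-orbit exhausting the $V$-cosets. Once this is secured, both implications are recognitions of a nontrivial generalized wreath product followed by an appeal to indecomposability, and the remaining bookkeeping (that $|L|=p^2$ and that the decompositions are genuinely nontrivial) is routine.
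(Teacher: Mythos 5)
Your proposal is correct. The ``only if'' half is essentially the paper's own argument: assume $L_{ij}=L_{i'j'}=L$ with $\langle V,x^iy^j\rangle\neq\langle V,x^{i'}y^{j'}\rangle$, pass to $G/L$ where the two basic sets become singletons that together with $V/L$ generate everything, invoke Lemma~\ref{groupring} to get $\mathcal{A}_{G/L}=\mathbb{Z}(G/L)$, and conclude that $\mathcal{A}$ is a nontrivial $V/L$-wreath product, contradicting Corollary~\ref{wrnonnorm}. The ``if'' half is where you genuinely diverge. The paper disposes of it in one line: if $\langle V,x^iy^j\rangle=\langle V,x^{i'}y^{j'}\rangle$ then $X_{i'j'}$ is obtained from $X_{ij}$ by rational conjugation (Lemma~\ref{burn}) followed by translation by an element of $V=\O_\theta(\mathcal{A})$, and both operations preserve the radical. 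You instead restrict to $H=\langle V,x^iy^j\rangle$, note that $\O_\theta(\mathcal{A}_H)=V$ has index $p$ in $H$, and apply Lemma~\ref{psring}(3) to force every basic set of $\mathcal{A}_H$ outside $V$ to be a coset of one and the same subgroup $L\leq V$ of order $p^2$. That works, and your preliminary claim that every subgroup containing $V$ is an $\mathcal{A}$-subgroup is sound, though the detour through regular $M$-orbits is unnecessary: Lemma~\ref{p2coset} already places each basic set outside $V$ inside a single $V$-coset, and a subgroup containing $V$ is a union of $V$-cosets, so it is automatically an $\mathcal{A}$-set. The paper's route is shorter and uses only the rational-conjugation symmetry; yours is a little heavier but yields the slightly stronger structural statement that all basic sets in a fixed $H_\ell\setminus V$ share a common radical, without appealing to Lemma~\ref{burn} at all.
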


\begin{proof}
If $\langle V,x^iy^j\rangle=\langle V,x^{i^\prime}y^{j^\prime}\rangle$, then $X_{i^\prime j^\prime}$ can be obtained from $X_{ij}$ by rational conjugation (see Lemma~\ref{burn}) which preserves a radical of a basic set.

Suppose that $\langle V,x^iy^j\rangle\neq\langle V,x^{i^\prime}y^{j^\prime}\rangle$ and $L_{ij}=L_{i^\prime j^\prime}:=L$. Then $\langle x^iy^j,x^{i^\prime}y^{j^\prime},V\rangle=G$, $X_{ij}=Lx^iy^j$, and $X_{i^\prime j^\prime}=Lx^{i^\prime}y^{j^\prime}$. So the basic sets $X_{ij}/L$ and $Y_{i^\prime j^\prime}/L$ of $\mathcal{A}_{G/L}$ are singletons. Therefore $\mathcal{A}_{G/L}=\mathbb{Z}(G/L)$ by Lemma~\ref{groupring}. This yields that every basic set of $\mathcal{A}$ is contained in an $L$-coset. Due to Lemma~\ref{p2coset}, we conclude that every basic set from $\mathcal{S}(\mathcal{A})_{G\setminus V}$ is an $L$-coset. Thus, $\mathcal{A}$ is the $V/L$-wreath product, a contradiction to Corollary~\ref{wrnonnorm}.  
\end{proof}

Let $\widetilde{G}=V\times \langle \widetilde{x}\rangle \times \langle \widetilde{y}\rangle$, where $|\widetilde{x}|=|\widetilde{y}|=p$. Since $V\cong C_p^3$, we have $\widetilde{G}\cong C_p^5$. Every element $\widetilde{g}$ of $\widetilde{G}$ can be uniquely presented in the form $\widetilde{g}=\widetilde{x}^i\widetilde{y}^jv$, where $i,j\in \{0,\ldots,p-1\}$ and $v\in V$. Let us consider a partition $\widetilde{\mathcal{S}}$ of $\widetilde{G}$ into the following sets:
$$\{v\},~\widetilde{X}_{ij}=L_{ij}\widetilde{x}^i\widetilde{y}^jw_{ij},$$
where $v\in V$, $i,j\in \{0,\ldots,p-1\}$ are such that $(i,j)\neq 0$, and $w_{ij}$ runs over a transversal for $L_{ij}$ in $V$. Clearly, $\widetilde{X}_{ij}^{-1}=\widetilde{X}_{p-i,p-j}$. Using Lemma~\ref{p2coset1}, one can compute that
$$\underline{\widetilde{X}_{ij}}\cdot \underline{\widetilde{X}_{i^\prime j^\prime}}=
\begin{cases}
p\underline{V\widetilde{x}^{i+i^\prime}\widetilde{y}^{j+j^\prime}},~\langle x^iy^j\rangle\neq\langle x^{i^\prime}y^{j^\prime}\rangle,\\
p^2\underline{L_{ij}\widetilde{x}^{i+i^\prime}\widetilde{y}^{j+j^\prime}},~\langle x^iy^j\rangle=\langle x^{i^\prime}y^{j^\prime}\rangle.\\
\end{cases}$$
Therefore $\widetilde{\mathcal{A}}=\Span_{\mathbb{Z}}\{\underline{\widetilde{X}}:~\widetilde{X}\in \widetilde{\mathcal{S}}\}$ is an $S$-ring over $\widetilde{G}$. 

Let us define a bijection $f$ from $G$ to $\widetilde{G}$ as follows:
$$(x^iy^jv)^f=\widetilde{x}^i\widetilde{y}^jv$$
for all $i,j\in \{0,\ldots,p-1\}$ and $v\in V$. Let us check that $f$ is an isomorphism from $\mathcal{A}$ to $\widetilde{\mathcal{A}}$. Due to the definition of $f$, we conclude that $v^f=v$ and 
$$(X_{ij}v)^f=(L_{ij}vx^iy^j)^f=L_{ij}v\widetilde{x}^i\widetilde{y}^j=\widetilde{X}_{ij}v$$
for all $i,j\in \{0,\ldots,p-1\}$ and $v\in V$. Thus, $f$ maps the basic sets of $\mathcal{A}$ to the basic sets of $\widetilde{\mathcal{A}}$.

Let $g,g^\prime\in G$. Then $g=x^iy^jv$ and $g^\prime=x^{i^\prime}y^{j^\prime}v^\prime$ for some $i,j,i^\prime,j^\prime\in \{0,\ldots,p-1\}$ and $v,v^\prime\in V$ and hence 
$$g^\prime g^{-1}=x^{i^\prime-i}y^{j^\prime-j}v^{\prime}v^{-1}.$$ 
The definition of $f$ implies that $g^f=\widetilde{x}^i\widetilde{y}^jv$ and $(g^\prime)^f=\widetilde{x}^{i^\prime}\widetilde{y}^{j^\prime}v^\prime$ and consequently 
$$(g^\prime)^f(g^f)^{-1}=\widetilde{x}^{i^\prime-i}\widetilde{y}^{j^\prime-j}v^{\prime}v^{-1}.$$ 
Let $X\in \mathcal{S}(\mathcal{A})$ containing $x^{i^\prime-i}y^{j^\prime-j}v^{\prime}v^{-1}$. Then $X^f\in \mathcal{S}(\widetilde{A})$ contains $\widetilde{x}^{i^\prime-i}\widetilde{y}^{j^\prime-j}v^{\prime}v^{-1}$. Therefore
$$g^\prime g^{-1}\in X\Leftrightarrow (g^\prime)^f(g^f)^{-1}\in X^f.$$
Thus, $f$ is an isomorphism from $\mathcal{A}$ to $\widetilde{\mathcal{A}}$. Together with Lemma~\ref{stabautp2}, this implies that 
\begin{equation}\label{auttilde}
|\aut(\widetilde{\mathcal{A}})|=p^7.
\end{equation}

Let $\widetilde{N}=N_{\aut(\widetilde{\mathcal{A}})}(\widetilde{G}_r)\leq \Hol(\widetilde{G})$. Assume that $|\widetilde{N}_e|=p$. Put $\widetilde{\mathcal{B}}=\cyc(\widetilde{N}_e,\widetilde{G})$. One can see that $\widetilde{N}\leq \aut(\widetilde{\mathcal{A}})$ and hence $\widetilde{\mathcal{B}}\geq \widetilde{\mathcal{A}}$. Clearly, every basic set of $\widetilde{\mathcal{B}}$ is of size at most~$p$. Therefore $\widetilde{\mathcal{B}}>\widetilde{\mathcal{A}}$. This implies that $\aut(\widetilde{\mathcal{B}})<\aut(\widetilde{\mathcal{A}})$. On the other hand, $\aut(\widetilde{\mathcal{B}})\geq \widetilde{N}$ and consequently 
$$\aut(\widetilde{\mathcal{B}})=\widetilde{N}.$$

Let $f\in \aut(\widetilde{\mathcal{A}})\setminus \widetilde{N}$. Then $\widetilde{G}_r^f\neq \widetilde{G}_r$. As $|\aut(\widetilde{\mathcal{A}}):\widetilde{N}|=p$, we conclude that $\widetilde{N}$ is normal in $\aut(\widetilde{\mathcal{A}})$ and hence  $\widetilde{G}_r^f\leq \widetilde{N}$. Thus, $\widetilde{G}_r^f$ and $\widetilde{G}_r$ are nonconjugate $\widetilde{G}$-regular subgroups of $\widetilde{N}=\aut(\widetilde{\mathcal{B}})$. This yields that $\widetilde{\mathcal{B}}$ is a non-$\CI$-$S$-ring over $\widetilde{G}$. However, $\widetilde{G}\cong C_p^5$ is a $\CI^{(2)}$-group by~\cite{FK} which means that $\widetilde{\mathcal{B}}$ is $\CI$, a contradiction.

The above discussion implies that 
$$|\widetilde{N}_e|=p^2$$
and hence $\widetilde{\mathcal{A}}=\cyc(\widetilde{N}_e,\widetilde{G})$. Note that $\widetilde{\mathcal{A}}$ is indecomposable by Corollary~\ref{wrnonnorm} and $|\O_{\theta}(\widetilde{\mathcal{A}})|=|V|=p^3$ because $\widetilde{\mathcal{A}}$ and $\mathcal{A}$ are isomorphic. Therefore $\widetilde{\mathcal{A}}$ satisfies all the conditions of~\cite[Lemma~6.4]{FK}. From~\cite[Lemma~6.5]{FK} it follows that $|\aut(\widetilde{\mathcal{A}})|=p^8$, a contradiction to Eq.~\eqref{auttilde}.


\begin{thebibliography}{list}


\bibitem{Adam}
\emph{A.~\'Ad\'am}, Research Problem 2-10, J. Combin. Theory, \textbf{2} (1967), 393.


\bibitem{Babai}
\emph{L.~Babai}, Isomorphism problem for a class of point symmetric structures, Acta Math. Acad. Sci. Hung., \textbf{29}, No.~3 (1977), 329--336.



\bibitem{BF}
\emph{L.~Babai,~P.~Frankl}, Isomorphisms of Cayley graphs I, Colloq. Math. Soc. J\'anos Bolyai, \textbf{18}, North-Holland, Amsterdam (1978), 35--52.


\bibitem{CP}
\emph{G.~Chen, I.~Ponomarenko}, Coherent configurations, Central China Normal University Press, Wuhan (2019).



\bibitem{DMS}
\emph{T.~Dobson, M.~Muzychuk, P.~Spiga}, Generalised dihedral CI-groups, Ars Math. Contemp. \textbf{22} (2022), Paper No.~7, 18 pp.



\bibitem{EP}
\emph{S.~Evdokimov,~I.~Ponomarenko}, On a family  of Schur rings over a finite cyclic group, St. Petersburg Math. J., \textbf{13}, No.~3 (2002), 441--451.


\bibitem{EP2}
\emph{S.~Evdokimov,~I.~Ponomarenko}, Characterization of cyclotomic schemes and normal Schur rings over a cyclic group, St. Petersburg Math. J., \textbf{14}, No.~2 (2003), 189--221.

 
\bibitem{EP3}
\emph{S.~Evdokimov,~I.~Ponomarenko}, Schurity of $S$-rings over a cyclic group and generalized wreath product of permutation groups, St. Petersburg Math. J., \textbf{24}, No.~3 (2013), 431--460. 


\bibitem{EKP}
\emph{S.~Evdokimov,~I.~Kov\'acs,~I.~Ponomarenko}, On schurity of finite abelian groups, Comm. Algebra,  \textbf{44}, No.~1 (2016), 101--117.


\bibitem{FPW}
\emph{X.-G.~Fang, C.~Praeger, J.~Wang}, On the automorphism groups of Cayley graphs of finite simple groups, \textbf{66}, No.~3 (2002), 563--578.



\bibitem{FK}
\emph{Y-Q.~Feng, I.~Kov\'acs}, Elementary abelian groups of rank~$5$ are $\DCI$-group, J. Combin. Theory Ser. A, \textbf{157} (2018) 162--204.



\bibitem{GAP}
\emph{M. Klin, C. Pech, S. Reichard}, COCO2P -- a GAP package, 0.14, 07.02.2015, http://www.math.tu-dresden.de/~pech/COCO2P.



\bibitem{GGRV}
\emph{J.~Guo, W.~Guo, G.~Ryabov, A.~Vasil'ev}, On Cayley representations of central Cayley graphs over almost simple groups, J. Algebr. Comb., \textbf{57}, No.~1 (2023) 227--237.


\bibitem{HM}
\emph{M.~Hirasaka,~M.~Muzychuk}, An elementary abelian group of rank~$4$ is a $\CI$-group, J. Combin. Theory Ser. A, \textbf{94} (2001), 339--362.




\bibitem{KM}
\emph{I.~Kov\'acs,~M.~Muzychuk}, The group $\mathbb{Z}_p^2\times \mathbb{Z}_q$ is a $\CI$-group, Comm. Algebra, \textbf{37}, No.~10 (2009), 3500--3515. 

\bibitem{KMPRS}
\emph{I. Kov\'acs, M. Muzychuk, P. P. P\'alfy, G. Ryabov, G. Somlai}, CI-property of $C_p^2\times C_n$ and $C_p^2\times C_q^2$ for digraphs, J. Combin. Theory Ser. A, \textbf{196} (2023), Article ID 105738.

\bibitem{KR1}
\emph{I.~Kov\'acs,~G.Ryabov}, $\CI$-property for decomposable Schur rings over an abelian group, Algebra Colloq., \textbf{26}, No.~1 (2019), 147--160. 


\bibitem{KR}
\emph{I.~Kov\'acs,~G.Ryabov}, The group $C_p^4 \times C_q$ is a $\DCI$-group, Discrete Math., \textbf{345}, No.~3 (2022), Article ID 112705.



\bibitem{Li}
\emph{C.~H.~Li}, On isomorphisms of finite Cayley graphs -- a survey, Discrete Math., \textbf{256}, Nos.~1-2 (2002), 301--334.


\bibitem{LLP}
\emph{C.~H.~Li, Z.~P.~Lu, P.~P.~P\'alfy}, Further restriction on the structure of finite $\CI$-groups, J. Algebraic Combin. \textbf{26}, No.~2 (2007) 161--181.



\bibitem{M1}
\emph{M.~Muzychuk}, Adam's conjecture is true in the square-free case, J. Combin. Theory Ser. A, \textbf{72} (1995), 118--134.



\bibitem{M2}
\emph{M.~Muzychuk}, On \'Ad\'am's conjecture for circulant graphs, Discrete Math., \textbf{167/168} (1997), 497--510; corrigendum \textbf{176} (1997), 285--298.



\bibitem{MS}
\emph{M.~Muzychuk,~G.~Somlai}, The Cayley isomorphism property for $\mathbb{Z}_p^3\times \mathbb{Z}_q$, Algebr. Comb., \textbf{4}, No.~2 (2021) 289--299. 


\bibitem{Now}
\emph{L.~Nowitz}, A non-Cayley-invariant Cayley graph of the elementary abelian group of order~$64$, Discrete Math., \textbf{110}, Nos.~1-3 (1992), 223--228.

\bibitem{Praeger}
\emph{C.~Praeger}, Finite normal edge-transitive Cayley graphs, Bull. Aust. Math., \textbf{60}, No.~2 (1999), 207--220.


\bibitem{Ry}
\emph{G.~Ryabov}, The Cayley isomorphism property for the group $C_4 \times C_p^2$, Comm. Algebra, \textbf{49}, No.~4 (2021), 1788--1804.


\bibitem{Ry2}
\emph{G.~Ryabov}, The Cayley isomorphism property for the group $C_2^5 \times C_p$, ARS Math. Contemp., \textbf{19}, No.~2 (2020), 277--295.	




\bibitem{Schur}
\emph{I.~Schur}, Zur theorie der einfach transitiven Permutationgruppen, S.-B. Preus Akad. Wiss. Phys.-Math. Kl., \textbf{18}, No.~20 (1933), 598--623.


\bibitem{WWX}
\emph{C.-Q.~Wang, D.~J.~Wang, M.~Y.~Xu}, Normal Cayley graphs of finite groups, Sci. China. Ser. A., \textbf{28} (1998) 131--139. 

\bibitem{WeisL}
\emph{B.~Weisfeiler,~A.~Leman}, Reduction of a graph to a canonical form and an algebra which appears in the process, NTI, \textbf{2}, No.~9 (1968), 12--16.



\bibitem{Wi}
\emph{H.~Wielandt}, Finite permutation groups, Academic Press, New York - London (1964).

\bibitem{Wi2}
\emph{H.~Wielandt}, Permutation groups through invariant relations and invariant functions, Lect. Notes Dept. Math. Ohio St. Univ., Columbus (1969).



\bibitem{XFRL}
\emph{J.-H. Xie,~Y.-Q. Feng,~G.~Ryabov,~Y.-L.~Liu}, Normal Cayley digraphs of cyclic groups with CI-property, Comm. Algebra, \textbf{50}, No.~7 (2022), 2911--2920.


\bibitem{XFK}
\emph{J.-H. Xie,~Y.-Q. Feng,~Y.~S.~Kwon}, Normal Cayley digraphs of generalized quaternion groups with CI-property, Appl. Math. Comput., \textbf{422} (2022), Article ID 126966.


\bibitem{XFZ}
\emph{J.-H. Xie,~Y.-Q. Feng,~J.-X.~ Zhou}, Normal Cayley digraphs of dihedral groups with the CI-property, Ars Math. Contemp., \textbf{23}, No.~4 (2023), Paper No.~8, 15 pp.




 



\end{thebibliography}
\end{document}